\numberwithin{equation}{section}
\newtheorem{theorem}{Theorem}[section]
\newtheorem{lemma}[theorem]{Lemma}
\newtheorem{proposition}[theorem]{Proposition}
\newtheorem{remark}[theorem]{Remark}
\newtheorem{definition}[theorem]{Definition}
\def\eps{\varepsilon }
\renewcommand{\div}{{\rm div}}
\newcommand{\RR}{\mathbb{R}}
\newcommand{\ZZ}{{\mathbb Z}}
\def\beq{\begin{equation}}
\def\eeq{\end{equation}}
\def\bb1{{1\!\!1}}
\def\cL{\mathcal{L}}
\def\cW{\mathcal{W}}
\def\R{\mbox{Re }}
\def\w{{\omega}}
\def\pt{\partial}
\def\eps{\varepsilon}
\def\triangle{\Delta}
\def\bega{\begin{aligned}}
\def\enda{\end{aligned}}
\def\R{\mathbb{R}^2}
\def\lw{\left}
\def\rw{\right}
\def\R{\mathbb{R}}
\def\wtd{\widetilde}
\def\L{\mathcal{L}}
\def\Z{\mathbb{Z}}
\def\bcase{\begin{cases}}
\def\ecase{\end{cases}}
\def\al{\alpha}
\def\bmx{\begin{bmatrix}}
\def\emx{\end{bmatrix}}
\begin{document}

\title{The inviscid limit of Navier-Stokes equations for locally near boundary analytic data on an exterior circular domain} 
\author{Toan T. Nguyen\footnotemark[1] 
\and
Trinh T. Nguyen\footnotemark[2]
}

\maketitle

\renewcommand{\thefootnote}{\fnsymbol{footnote}}

\footnotetext[1]{Department of Mathematics, Penn State University, State College, PA 16803. Email: nguyen@math.psu.edu. The author is partly supported by the NSF under grant DMS-2054726.}

\footnotetext[2]{Department of Mathematics, University of Southern California, LA, CA 90089.
Email: tnguyen5@usc.edu. The author is partly supported by the AMS-Simons Travel Grant Award.
}

\begin{abstract}

In their classical work \cite{SammartinoCaflisch2}, Caflisch and Sammartino established the inviscid limit and boundary layer expansions of vanishing viscosity solutions to the incompressible Navier-Stokes equations for analytic data on a half-space. It was then subsequently announced in their {\em Comptes rendus} article \cite{SamCaf-ext} that the results can be extended to include analytic data on an exterior circular domain, however the proof appears missing in the literature. The extension to an exterior domain faces a fundamental difficulty that the corresponding linear semigroup may not be contractive in analytic spaces as was the case on the half-space \cite{2N}. In this paper, we resolve this open problem for a much larger class of initial data. The resolution is due to the fact that it suffices to propagate solutions that are analytic only near the boundary, following the framework developed in the recent works that involve the boundary vorticity formulation, the analyticity estimates on the Green function, the adapted geodesic coordinates near a boundary, and the Sobolev-analytic iterative scheme.

\end{abstract}

%\tableofcontents

\section{Introduction}
In this paper, we consider the Navier-Stokes equations with small viscosity $\nu>0$
\beq\label{NS-intro}
\bega 
\pt_t u^\nu+u^\nu\cdot\nabla u^\nu+\nabla p^\nu&=\nu\triangle u^\nu,\\
\nabla\cdot u^\nu&=0,\\
u^\nu|_{\pt\Omega}&=0,
\enda 
\eeq 
on an exterior circular domain $\Omega$ in $\RR^2$, modeling the dynamics of an incompressible fluid around a solid body at a sufficiently high Reynolds number. Of great physical and mathematical interest is the asymptotic behavior of solutions to \eqref{NS-intro} in the small viscosity limit. When $\nu=0$, \eqref{NS-intro} reduces to the Euler equations 
\beq\label{Euler-intro} 
\pt_tu^0+u^0\cdot\nabla u^0+\nabla p^0=0,\qquad \nabla\cdot u^0=0
\eeq
with the non-penetration boundary condition $u^0\cdot n=0$ on the boundary $\pt\Omega$.
Thus, in the limit when $\nu\to 0$, one would formally expect the solutions of the Navier-Stokes equations to converge to $u^0$ in $L^2(\Omega)$ uniformly for a short time, however it remains elusive whether this may be the case. 
Boundary layers appear due to the discrepancy between the boundary conditions in \eqref{NS-intro} and in the limiting model \eqref{Euler-intro}, generating arbitrarily large vorticity near the boundary.  
Kato in his celebrated work \cite{Kato84} shows that the inviscid limit, i.e.  the strong convergence of solutions in the natural energy norm, holds if and only if 
\beq\label{kato-con}
\nu \int_0^T\int_{\{d(x,\pt\Omega)\lesssim \nu\}} \lw|\nabla u^\nu(t)\rw|^2dxdt\to 0\qquad \text{as}\quad \nu\to 0,
\eeq
which implies that the vorticity needs to be controlled quantitatively near the boundary. For general smooth initial data, vorticity can however be very unstable on the boundary that could generate multi-layer solutions at different smaller scales \cite{Grenier-CPAM,Toan-Grenier-APDE}, leading to a larger and larger vorticity than expected, and the inviscid limit problem is therefore unlikely to hold. See, for instance, \cite{Bardos, PC,MM} and the references therein for further discussion. In this paper, we consider smooth data that are analytic locally near the boundary.

\subsection{Previous results}
\underline{When $\Omega$ is the half-space}: In their classical work, Sammartino-Caflisch \cite{SammartinoCaflisch2} established the inviscid limit and Prandtl's boundary layer expansions for analytic data: namely, 
\beq \label{Prandtl}
u^\nu(t,x,y)=u^0(t,x,y)+u^P\lw(t,x,\frac{y}{\sqrt{\nu}}\rw)+o(1)_{L^\infty} ,
\eeq
where the error term $o(1)_{L^\infty}$ is in fact of order $\sqrt \nu$ for such analytic data and thus vanishing in the inviscid limit. The result is extended by Maekawa \cite{Maekawa14} for Sobolev data whose vorticity is compactly supported away from the boundary. Unlike \cite{SammartinoCaflisch2}, Maekawa constructed his solution via the vorticity formulation with a nonlocal boundary condition, which reveals more explicitly the localized interaction between boundary layers and interior solutions. It was this vorticity formulation that leads to a more user-friendly direct proof of the inviscid limit given in \cite{2N} by the authors of the present work, where we in addition devise analytic boundary layer norms, adapted from those introduced in \cite{Toan-Grenier-APDE}, that capture precisely the unbounded vorticity near the boundary. Building upon \cite{2N,Maekawa14}, Kukavica-Vicol-Wang \cite{KVW1} introduced suitable Sobolev-analytic norms that allow to establish the inviscid limit for data that are analytic only near the boundary; see also \cite{Wang} for a similar result in $3D$, \cite{trinh-igor} for the validity of \eqref{Prandtl} for such data, and \cite{GMM1,GMM2} for interesting stability results for data in some Gevrey classes. For Sobolev data, in strong contrast with the analytic case, the Prandtl Ansatz \eqref{Prandtl} is false due to counter-examples given in \cite{Grenier-CPAM,Toan-Grenier-APDE,GrenierNguyen2}.

~\\
\underline{When $\Omega$ is a bounded domain}: There are only few results in the literature that study the inviscid limit problem in fluid domains with a curved boundary. We mention a recent work \cite{Gie} that studies boundary layers in a suitable linearized flow in a general 3D smooth domain and \cite{WW20} which establishes a Prandtl asymptotic expansion in domain with a curved boundary. Very recently, building upon the recent advances including the vorticity formulation revived in \cite{Maekawa14}, the direct proof via the Green function approach developed in \cite{2N}, and the Sobolev-analytic norms introduced in \cite{KVW1}, Bardos-Nguyen-Nguyen-Titi \cite{2N1}
prove the inviscid limit for data that are analytic only near the boundary in a 2D bounded domain.

~\\
\underline{When $\Omega$ is an exterior domain:} In \cite{SamCaf-ext}, Caflisch and Sammartino give a short announcement on obtaining the inviscid limit for analytic data in an exterior circular domain, saving the full proof to be published in one of their listed references, which we are unable to locate. In this paper, we provide the missing proof. We refer the readers to Section \ref{sec-difficulty} where we explain the fundamental difficulty and our main strategy to establish the main result.

\subsection{Boundary vorticity formulation}\label{sec-vortform}

We consider the Navier-Stokes equations posed on the following circular exterior domain
\[
\Omega=\{(x_1,x_2)\in \R^2:\quad x_1^2+x_2^2>1\},
\]
in which for sake of presentation the radius is taken to be one. 
We shall work with the standard polar coordinates $(x_1,x_2) = (r\cos \theta, r\sin \theta)$ for $(r,\theta)\in [1,\infty)\times \mathbb{T}$. Let $e_r = (\cos\theta, \sin \theta)$ and $e_\theta = (-\sin \theta, \cos \theta)$ be the orthogonal frame, and set $(a,b)^\perp = (b,-a)$. We note that 
$$ \nabla = e_r \partial_r + \frac1r e_\theta \partial_\theta, \qquad \Delta = \partial_r^2+\frac{1}{r}\partial_r+\frac{1}{r^2}\partial_\theta^2,$$  
Thus, we write 
$$ u=u_re_r+u_{\theta}e_{\theta}, \qquad \omega = \nabla^\perp \cdot u =  \frac1r \partial_\theta u_r - \frac1r \partial_r (ru_\theta)$$
for velocity and vorticity of the fluid. The Navier-Stokes equations \eqref{NS-intro} can be written in the vorticity formulation as follows:
\beq \label{ext-eq}
\bega
\partial_t\omega-\nu \Delta_{r,\theta}\omega&=- u_r\partial_r\omega - \frac{1}{r}u_{\theta}\partial_\theta \omega
\enda
\eeq
on $[1,\infty)\times \mathbb{T}$, in which $\Delta_{r,\theta} = \partial_r^2+\frac{1}{r}\partial_r+\frac{1}{r^2}\partial_\theta^2$. Making use of the incompressibility condition, we introduce the stream function $\psi = \psi(r,\theta)$ defined through $u = \nabla^\perp \psi$, or equivalently
\begin{equation}\label{def-u} u _r = \frac1r\pt_\theta\psi, \qquad u_\theta = -\pt_r\psi .\end{equation}
By definition, the stream function solves the elliptic problem 
\beq\label{def-stream}
\begin{cases}
&\Delta_{r,\theta}\psi=\w\\
&\psi_{\vert_{r=1}}=0
\end{cases}
\eeq
whose solutions can be constructed explicitly through the Green function; see Section \ref{ellip-sec}. 

Therefore, the Navier-Stokes equation problem \eqref{NS-intro} reduces to study the scalar vorticity equation \eqref{ext-eq} on $[1,\infty)\times \mathbb{T}$, where the velocity is constructed through the Biot-Savart law \eqref{def-u}-\eqref{def-stream}. As for the no-slip boundary condition, $u_r =0$ follows from the condition $\psi=0$ on the boundary, while $u_\theta=0$ is a direct consequence of the following imposed condition 
$$ \partial_t u_\theta = 0$$
from which we derive the boundary condition on vorticity $\omega$. This formulation was introduced and developed in \cite{Anderson,Maekawa14}. See also \cite{2N,2N1}. 
 Indeed, by construction, we compute 
\beq \label{h-BC}
\begin{aligned}
0 = \partial_t u_\theta = -\partial_r \Delta^{-1}\partial_t \omega 
&= -\partial_r [\Delta^{-1} (\nu \Delta \omega - u \cdot \nabla \omega)]
\end{aligned}\eeq
on the boundary. This yields the following boundary condition for vorticity 
\beq\label{BC-vor}
\nu (\partial_r + N)\omega_{\vert_{r=1}} = [\partial_r \Delta^{-1} ( u \cdot \nabla \omega)]
_{\vert_{r=1}}
\eeq
where $N$ denotes the Dirichlet-Neumann operator on $\Omega$, which will be detailed in Section \ref{sec-NSE}.

\subsection{Main result}

Our main result is to establish a uniform bound on the vorticity and the inviscid limit of solutions to the Navier-Stokes problems for initial data whose vorticity is locally analytic near the boundary $r=1$. Precisely, 

\begin{definition} Let $\delta_0>0$ and $p\ge 1$. An $L^p$  function $f(r)$ defined on $[1,1+\delta_0]$ is said to be locally analytic near the boundary $r=1$ if it can be extended analytically to the pencil-like complex domain 
\[\bega 
R_{\rho}=& \Big\{r\in \mathbb{C}:\quad 1\le \Re r\le 1+\delta_0,\quad |\Im r|\le \rho(\Re r-1) \Big\}
\enda 
\]
for some positive analyticity radius $\rho$ with a finite norm $\|f\|_{L ^p_\rho}=\sup_{0 \le \eta <\rho}\| f \|_{L^p(\pt R_\eta)} $. 
\end{definition}

Note that a locally near boundary analytic function needs not to be analytic on the boundary, but only has bounded derivatives $(r-1)\partial_r$. Our main result is stated as follows:

\begin{theorem}\label{theo-main} Consider the vorticity equation \eqref{ext-eq} on $[1,\infty)\times \mathbb{T}$ with the boundary condition \eqref{BC-vor} and the Biot-Savart law \eqref{def-u}-\eqref{def-stream}. Assume that initial vorticity $\w^\nu_0(r,\theta)$ has Sobolev regularity $r^2\w^\nu_0 \in H^3([1,\infty)\times \mathbb{T})$, and its Fourier coefficients $\w^\nu_{0,n}(r)$ with respect to variable $\theta$ are locally analytic near the boundary and satisfy 
\begin{equation}\label{analytic-assmp}
\sum_{n\in \mathbb Z}e^{\eps_0|n|} \|\w^\nu_{0,n}(r)\|_{L^1_{\rho_0}} < \infty
\end{equation}
uniformly in $\nu$, for some positive constants $\epsilon_0, \rho_0$. Then, there is a positive time $T$, independent of $\nu$, so that the Navier-Stokes vorticity satisfies
\begin{equation}\label{vorticity-bd}
 \|\w^\nu(t)\|_{L^\infty(\partial \Omega)} \le C_0(\nu t)^{-1/2}
\end{equation}
for $t\in (0,T]$, and the inviscid limit holds: that is, there exists a unique limiting solution $u^0$ that solves the corresponding solution to Euler equations \eqref{Euler-intro} so that 
\beq\label{inv-lim}
\sup_{0\le t\le T}\|u^\nu-u^0\|_{L^2(\Omega)}\to 0\qquad \text{as}\quad \nu\to 0 .
\eeq
\end{theorem}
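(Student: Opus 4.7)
The plan is to work with the boundary vorticity formulation \eqref{ext-eq}--\eqref{BC-vor} and its Duhamel representation. Let $S_\nu(t)$ denote the Stokes-type semigroup associated with $\partial_t\omega=\nu\Delta_{r,\theta}\omega$ on $[1,\infty)\times\mathbb T$ subject to the nonlocal condition $\nu(\partial_r+N)\omega|_{r=1}=g$; Fourier-expanding in $\theta$ reduces this to a family of one-dimensional problems in $r$ whose Green functions $G^\nu_n(t,r,r')$ admit an explicit construction. The first step is to derive analyticity estimates for $G^\nu_n$ on the complex pencils $R_\eta$, following the half-space analysis of \cite{2N} but tracking curvature corrections: the kernel splits into a one-dimensional heat part of Gaussian type in $r-r'$, an exponentially decaying reflection piece, and a boundary-layer contribution responsible for the $(\nu t)^{-1/2}$ factor in \eqref{vorticity-bd}.

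Next, inspired by \cite{KVW1,2N1}, I would introduce a Sobolev--analytic decomposition $\omega^\nu=\omega_S+\omega_A$. The near-boundary piece $\omega_A$ is measured in a norm of the form
\[
\|\omega_A(t)\|_{X_{\rho,\epsilon}}=\sum_{n\in\mathbb Z} e^{\epsilon(t)|n|}\|\omega_{A,n}(t,\cdot)\|_{L^1_{\rho(t)}},
\]
with time-decaying analyticity radii $\rho(t)=\rho_0-\beta t$ and $\epsilon(t)=\epsilon_0-\beta t$, whereas $\omega_S$ is controlled via $\|r^2\omega_S\|_{H^3}$ on the full exterior domain, as allowed by the Sobolev hypothesis on $\omega_0^\nu$. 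The Biot--Savart map from \eqref{def-u}--\eqref{def-stream} is estimated mode by mode using the elliptic Green function provided in Section \ref{ellip-sec}, and its trace $[\partial_r\Delta^{-1}(u\cdot\nabla\omega)]|_{r=1}$ in \eqref{BC-vor} is then controlled in the pencil $L^1_\rho$-norm.

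The heart of the proof is a fixed-point argument for
\[
\omega^\nu(t)=S_\nu(t)\omega_0^\nu-\int_0^t S_\nu(t-s)\bigl(u^\nu\cdot\nabla\omega^\nu\bigr)(s)\, ds
\]
with the nonlocal boundary forcing absorbed into $S_\nu$. For the analytic piece, the derivative loss in $u^\nu\cdot\nabla\omega^\nu$ is absorbed by combining the parabolic smoothing factor $(\nu(t-s))^{-1/2}$ coming from the Green function in the normal variable with the tangential Cauchy--Kowalewski gain produced by the shrinking analyticity radius $\epsilon(t)$ in the Fourier index $n$; these two ingredients yield integrable singularities in $s$ after choosing $\beta$ sufficiently large, and the fixed point closes on a short time $T$ independent of $\nu$. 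The Sobolev piece $\omega_S$ is propagated by standard $H^3$ energy estimates on \eqref{ext-eq}, using that its nonlinear coupling to $\omega_A$ only involves $\omega_A$ through its already controlled analytic norm. The pointwise bound \eqref{vorticity-bd} then follows by taking the trace of $\omega_A$ at $r=1$, with the $(\nu t)^{-1/2}$ factor supplied by the Dirichlet--Neumann part $N$ of the boundary condition \eqref{BC-vor}.

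The principal obstacle, anticipated in Section \ref{sec-difficulty}, is the loss of contractivity of $S_\nu$ on purely analytic spaces over the curved exterior domain: unlike the half-space \cite{2N}, one cannot symmetrize across the boundary to cancel the curvature commutators, so a naive analytic iteration diverges. The resolution offered by the present framework is that analyticity is only propagated on the thin pencils $R_\rho$ hugging $r=1$; the non-contractive growth from the curvature commutators is then transferred to $\omega_S$, which absorbs it through the dissipation of the heat semigroup and the Sobolev regularity of the data. Once \eqref{vorticity-bd} and the near-boundary analytic control of $\omega^\nu$ are in hand, the inviscid limit \eqref{inv-lim} follows from a standard boundary-layer comparison: outside a $\sqrt\nu$-thick layer one has $u^\nu\to u^0$ strongly by stability of the Euler flow for the analytic trace data, while the layer itself contributes $o(1)$ in $L^2(\Omega)$ as $\nu\to 0$.
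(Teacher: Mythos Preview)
Your overall architecture---vorticity formulation, Duhamel, Sobolev--analytic norms with shrinking radii, and a Cauchy--Kowalewski-type closure---matches the paper's. But there is a genuine gap in how you propose to handle the normal derivative in the nonlinearity, and a second place where your treatment of curvature is too vague to succeed as written.

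\textbf{The normal derivative.} You write that the derivative loss in $u^\nu\cdot\nabla\omega^\nu$ is absorbed by ``the parabolic smoothing factor $(\nu(t-s))^{-1/2}$ coming from the Green function in the normal variable.'' This cannot close uniformly in $\nu$: integrating $(\nu(t-s))^{-1/2}$ over $s\in[0,t]$ produces $\sqrt{t/\nu}$, which blows up in the inviscid limit. The paper never uses parabolic smoothing to recover a normal derivative in the analytic $L^1$ norm. Instead it exploits the \emph{conormal} structure of the transport term: since the stream function vanishes on the boundary, $u_r = r^{-1}\partial_\theta\psi$ vanishes like $y$ near $y=0$, so $u_r\partial_r\omega$ can be rewritten as $(y^{-1}\partial_\theta\psi)\cdot(y\partial_y\omega)$, and both factors are controlled by the analytic norms $\mathcal W^{k,1}_\rho$ built on $\partial_x^i(y\partial_y)^j$ (see the bilinear estimate in Proposition~\ref{bilinear1}). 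The only place where a genuine $(\nu(t-s))^{-1/2}$ appears is in the \emph{boundary trace} estimate (Proposition~\ref{at-bdr}), and there it is compensated by weighting that trace with $\sqrt{\nu\tau}$ in the iteration norm \eqref{main-norm}. Without this conormal mechanism your fixed point will not close on a $\nu$-independent time.

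\textbf{Curvature.} You correctly identify that contractivity fails for the full exterior Stokes semigroup and that analyticity should only be propagated on a thin strip. But you do not explain how the curvature commutator---which contains $(r^{-2}-1)\partial_\theta^2$, a two-derivative loss---is actually made perturbative. The paper's device is a geodesic rescaling $y=\lambda^{-1}(r-1)$, $x=\lambda^{-1}\theta$ with $\lambda$ small: in the new variables the exterior Laplacian becomes $\Delta_{x,y}+\lambda L$ with $L=a(y)\partial_y+b(y)\partial_x^2$, and the two-derivative piece $\nu\lambda b(y)\partial_x^2$ is absorbed because $b(y)\le 2y$ on the analytic strip and the heat factor $\alpha^2\nu e^{-\alpha^2\nu(\tau-s)}$ is integrable in $s$ (Proposition~\ref{nuL}). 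Your phrase ``transferred to $\omega_S$, which absorbs it through the dissipation'' does not supply this mechanism; dissipation alone cannot beat a two-derivative loss uniformly in $\nu$.

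\textbf{Minor differences.} The paper does not split $\omega^\nu=\omega_S+\omega_A$; it uses a single $\omega$ measured simultaneously in the near-boundary analytic norm and the far-field weighted Sobolev norm, coupled through Propositions~\ref{coupled} and~\ref{energy}. Also, once \eqref{vorticity-bd} is proved, the inviscid limit \eqref{inv-lim} is obtained directly by checking a Kato-type criterion $\nu\int_0^T\|\omega^\nu(t)\|_{L^\infty(\partial\Omega)}\,dt\to 0$, not via a boundary-layer expansion.
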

\begin{remark} 
If we replace the assumption \eqref{analytic-assmp} by a stronger assumption
\[
\sum_{n\in \mathbb Z}e^{\eps_0|n|} \|\w^\nu_{0,n}(r)\|_{L^\infty_{\rho_0}} < \infty
\]
then \eqref{vorticity-bd} can be improved to 
$\sup_{0\le t\le T}\|\w^\nu(t)\|_{L^\infty(\partial \Omega)} \le C_0\nu^{-1/2}.
$
\end{remark}

The inviscid limit is a direct consequence of the boundary vorticity estimates \eqref{vorticity-bd}, which is optimal in view of the boundary layer expansion \eqref{Prandtl} as predicted by Prandtl and justified for analytic data  \cite{SammartinoCaflisch2}. The assumption \eqref{analytic-assmp} holds in particular for data whose vorticity vanishes near the boundary, and the theorem thus recovers the result by Maekawa \cite{Maekawa14} to the case of exterior circular domains. We stress that the near boundary analyticity assumption \eqref{analytic-assmp} is necessary for the vorticity bound \eqref{vorticity-bd} to hold, since otherwise the presence of near boundary high frequency will generate boundary viscous sublayers \cite{Toan-Grenier-APDE}, whose vorticity is proven to reach order $\nu^{-3/4}$, much larger than the Prandtl's classical prediction of order $\nu^{-1/2}$. In general, much worse and more complex structure of boundary vorticity is expected; see  \cite{Grenier-CPAM,GGN1,Toan-Grenier-APDE,GrenierNguyen2} for further discussion.

\subsection{Difficulties and main ideas}\label{sec-difficulty}
Let us discuss the difficulties in proving the inviscid limit when the domain is an exterior circular disk. In view of the previous works \cite{2N,2N1}, there are several difficulties that one has to overcome in the present setting. Namely, the framework relies on the semigroup of the linear Stokes problem, treating the nonlinearity as a perturbation in the Duhamel representation. For the nonlinear iterative scheme to work, it is crucial that the semigroup is contractive in the function spaces under consideration, namely analytic spaces; see Proposition 3.1\footnote{We wish to point out a misprint in \cite[Proposition 3.1]{2N} where the third estimate on the trace semigroup in the boundary layer norm should read
$$ ||| \Gamma(\nu (t-s) )g |||_{\rho,\sigma,\delta(t),k} \lesssim \sqrt{\frac{t}{t-s}}|||g|||_{\rho,k} + \sqrt \nu |||g|||_{\rho,k+1}.$$  
Namely, the last term with one loss of derivatives on the boundary was missing! Note however this is harmless in \cite{2N}, since the estimates were used only to propagate the boundary layer norms {\em after closing} the nonlinear iteration with $L^1$ analytic norms where no loss of derivatives is present on the trace estimates; see the analysis in Section 4.2 of that same paper. 
}
in \cite{2N}. However, the contraction in analytic spaces is open for the linear Stokes problem on the exterior domain. Precisely, we are led to study the following Stokes problem 
\beq\label{Stokesmae}
\left \{\begin{aligned}
\pt_t \w-\nu \lw(\pt_r^2+\frac 1 r \pt_r+\frac{1}{r^2}\pt_\theta^2\rw)\w&=0\\
(\pt_r+|\pt_\theta|)\w_{\vert_{r=1}}&=0
\end{aligned}
\right.\eeq
whose resolvent kernel and Green kernel can be easily constructed. 
Deriving the analytic estimates on the Green function and the semigroup uniformly both in time and in the small viscosity limit however appears an impossible task. Indeed,  following \cite{2N} and working with the Laplace-Fourier transform variables $(\zeta,n)$ associated with $(t,\theta)$, the Green kernel for the resolvent problem 
is of the form 
\[
G_\zeta (r,r')=\frac{1}{W(I_n,K_n)(\mu r')}\frac{I_{n-1}(\mu)}{K_{n-1}(\mu)}K_n(\mu r)K_n(\mu r')+\begin{cases}\frac{I_n(\mu r)K_n(\mu r')}{W(I_n,K_n)(\mu r')}\qquad &\text{if}\quad r<r',\\
\frac{I_n(\mu r')K_n(\mu r)}{W(I_n,K_n)(\mu r')}\qquad &\text{if}\quad r>r',
\\\end{cases}
\]
with $\mu=\sqrt{\frac \zeta \nu}$, where the functions $K_n(z)$ and $I_n(z)$ are modified Bessel functions with complex value $z\in \mathbb C$ (e.g., \cite{Mae-ext}), with $W(I_n,K_n)$ being the Wronskian determinant. The temporal Green function is then defined by taking the inverse Laplace transform in $t$ of the kernel $G_\zeta (r,r')$. Unfortunately, the available pointwise bounds and asymptotic expansions of the modified Bessel functions are given only in the regime for 
 \begin{itemize} \item{fixed $n$, large $r$} 
 \item{or fixed $r$, large $n$,
 }
 \end{itemize}
but not when both $n,r$ are sufficiently large and $\nu$ is sufficiently small. As a consequence, the propagation of uniform semigroup estimates on analytic spaces remains open, and therefore the pointwise Green function approach developed in \cite{2N} does not apply directly. 

We overcome the issue by working with functions that are required to be analytic only near the boundary, see Theorem \ref{theo-Stokes}. Effectively, this only requires analytic estimates of the Green function near the boundary, which is available from the half-space result \cite{2N}. Precisely, close to the boundary $r=1$, we write  
 \[
\pt_r^2+\frac{1}{r}\pt_r+\frac{1}{r^2}\pt_\theta^2=(\pt_r^2+\pt_\theta^2)+\frac {1}{r}\pt_r+\lw(\frac{1}{r^2}-1\rw)\pt_\theta^2
\]
and using the half-space Green kernel for the operator $\pt_r^2+\pt_\theta^2$, treating the remaining terms as a perturbation. Importantly, we note that the last term experiences two a loss of two derivatives and is thus a perturbation only when $r$ is sufficiently close to $1$. See Section \ref{sec3} where we establish the semigroup estimates for the Stokes problem in Sobolev-analytic spaces.    

Finally, unlike the treatment in \cite{2N1}, we need to estimate the solution in the unbounded region and therefore a careful norm with suitable decay is needed. Our vorticity $\omega(r,\theta)$ decays like $r^{-2}$ away from the boundary. 

\section{Scaled equations and locally analytic spaces}

\subsection{Navier-Stokes equations in the rescaled variables}\label{sec-NSE}

To take advantage of localization near the boundary, we introduce a change of variables 
\[
x=\lambda^{-1}\theta,\quad y=\lambda^{-1}(r-1), \quad \tau =\lambda^{-2} t
\]
for some small parameter $\lambda>0$, and define the function $w$ such that 
\beq\label{w-omega}
w(\tau ,x,y)=\w(t,\theta,r)=\w(\lambda^2 \tau , \lambda x, 1+\lambda y)
\eeq
for $x\in \mathbb{T}_{2\pi/\lambda}$ and $y\in \RR_+$. By a direct calculation, we have 
\[\bega
\triangle_{r,\theta} &=\pt_r^2+\frac{1}{r}\pt_r+\frac{1}{r^2}\pt_\theta^2
\\&=\lambda^{-2}\lw(\pt_x^2+\pt_y^2+\lambda a(y)\pt_y+\lambda b(y)\pt_x^2\rw)\\
&=\lambda^{-2}\lw(\triangle_{x,y}+\lambda L\rw)
\enda 
\]
where $\Delta_{x,y} = \pt_x^2+\pt_y^2$ (and hereafter, we simply write $\Delta = \Delta_{x,y}$), 
\begin{equation}\label{def-LL}
L=a(y)\pt_y+b(y)\pt_x^2, \qquad 
a(y)=\frac{1}{1+\lambda y},\qquad b(y)=\frac{y(2+\lambda y)}{(1+\lambda y)^2}.
\end{equation}
From \eqref{ext-eq}, the scaled vorticity $w$ 
satisfies 
\beq\label{new-eq}
\bega 
&(\pt_{\tau }-\nu\triangle )w=\nu \lambda Lw+ B(\psi,w)
\enda 
\eeq
where 
\begin{equation}\label{def-BBB}
B(\psi,w)=-a(y)\nabla^\perp\psi\cdot\nabla w, \qquad \nabla^\perp=(\pt_y,-\pt_x).
\end{equation}
Similarly, abusing the same notation, the scaled stream function $\psi$ solves 
\beq\label{def-streamscaled}
\begin{cases}
&(\triangle+\lambda L)\psi=\lambda^2 w,\\
&\psi|_{y=0}=0.
\end{cases}
\eeq
We next derive a boundary condition for $w$. As mentioned in Section \ref{sec-vortform}, we impose $\partial_\tau u_\theta =0$, which gives $\pt_{\tau }\pt_y\psi|_{y=0}=0$ and so
\[
\bega 
\pt_y(\triangle+\lambda L)^{-1}\pt_{\tau }w|_{y=0}&=0.
\enda\]
Using the vorticity equation \eqref{new-eq}, we get 
\beq \label{bdr-cond1}
\bega 
\pt_y(\triangle+\lambda L)^{-1}\lw(\nu(\triangle+\lambda L)w+B(\psi,w)\rw)|_{y=0}=0.
\enda 
\eeq 
Let $w^\star$ solves
\beq\label{w-star-eq}
(\triangle+\lambda L)w^\star=0,\qquad w^\star|_{y=0}=w|_{y=0}.
\eeq
Then \eqref{bdr-cond1} becomes 
\[
\nu \pt_y (w-w^\star)|_{y=0}=-\pt_y(\triangle+\lambda L)^{-1}\lw(B(\psi,w)\rw)|_{y=0}
\]
Defining $Nw=-\pt_yw^\star|_{y=0}$, which is the classical Dirichlet-to-Neumann operator, we obtain the boundary condition for the vorticity  
\beq\label{bdr1}
\nu(\pt_y+N)w|_{y=0}=-\pt_y(\triangle+\lambda L)^{-1}\lw(B(\psi,w)\rw)|_{y=0}
\eeq
In this paper, for any function $f$ depending on $x\in \mathbb{T}_{2\pi/\lambda}$, we denote $f_\al$ to be the Fourier coefficient of $f$ in the frequency $\al\in \lambda \mathbb Z$, and $f_n$ to be the Fourier coefficient of $f$ in the original variable $\theta\in\mathbb{T}_{2\pi}$ where $n\in\mathbb Z$. We prove the following lemma regarding the Dirichlet-Neumann operator in the new variables:
\begin{lemma} \label{N-def}
The operator $Nw_\al$ can be written as
\[
Nw_\al=|\al|w_\al(0)+\lambda \int_0^\infty  \lw(w_\al(0) L_\al(e^{-|\al|y})+L_\al\wtd w_\al^\star\rw)dy
\]
where $\wtd w_\al^\star$ solves the elliptic problem 
\beq\label{w-star-eq}
\begin{cases}
&(\pt_y^2-\al^2)\wtd w_\al^\star=-\lambda L_\al \lw(w_\al(0)e^{-|\al|y}\rw)-\lambda L_\al \wtd w^\star_\al\\
&\qquad \wtd w_\al^\star|_{y=0}=0
\end{cases}
\eeq
and $L_\al=a(y)\pt_y-\al^2 b(y)$ is the linear operator acting on the frequency $\al$ of L.
\end{lemma}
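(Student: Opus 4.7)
The plan is to reduce the problem to a single Fourier mode in $x$ and then extract the normal derivative at the boundary by comparing the perturbed elliptic equation against its explicit flat-problem solution. Taking the $\alpha$-Fourier coefficient of $(\Delta + \lambda L) w^\star = 0$ with the boundary condition $w^\star_{\vert y=0}=w_{\vert y=0}$, and using $\pt_x^2 \mapsto -\alpha^2$, gives
\[
(\pt_y^2-\alpha^2)w^\star_\al + \lambda L_\al w^\star_\al = 0,\qquad w^\star_\al(0)=w_\al(0),
\]
with $L_\al = a(y)\pt_y-\al^2 b(y)$ as stated. The idea is to peel off the explicit harmonic extension of the boundary data: write
\[
w^\star_\al(y)=w_\al(0)\,e^{-|\al|y}+\tU_\al(y),\qquad \tU_\al := \wtd w_\al^\star.
\]
Since $(\pt_y^2-\al^2)e^{-|\al|y}=0$, the remainder $\tU_\al$ inherits homogeneous Dirichlet data $\tU_\al(0)=0$ and satisfies exactly the elliptic problem stated in \eqref{w-star-eq}:
\[
(\pt_y^2-\al^2)\tU_\al = -\lambda L_\al w^\star_\al = -\lambda\bigl(w_\al(0) L_\al(e^{-|\al|y})+L_\al\tU_\al\bigr).
\]

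Next I would read off the Dirichlet-to-Neumann operator from this decomposition. By definition $Nw_\al=-\pt_y w^\star_\al(0)$, so
\[
Nw_\al=|\al|\,w_\al(0)-\pt_y\tU_\al(0),
\]
and the only remaining task is to turn $-\pt_y\tU_\al(0)$ into the asserted integral over $(0,\infty)$. For this I would use a Green's identity, pairing the equation for $\tU_\al$ against the decaying harmonic solution $e^{-|\al|y}$: integrating $(\pt_y^2-\al^2)\tU_\al\cdot e^{-|\al|y}$ from $0$ to $\infty$ and integrating by parts twice, the interior terms involving $\al^2$ cancel, and, using $\tU_\al(0)=0$ together with decay of $\tU_\al,\pt_y\tU_\al$ at infinity, only the boundary contribution $-\pt_y\tU_\al(0)$ survives. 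Substituting the right-hand side then yields the claimed integral representation (the appropriate Green kernel being $\tfrac{1}{2|\al|}(e^{-|\al||y-y'|}-e^{-|\al|(y+y')})$, whose normal derivative at $y=0$ controls the sign and form of the correction term).

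The one nontrivial point is the existence and decay of $\tU_\al$ that makes the integration-by-parts argument legitimate. I would treat the stated equation for $\tU_\al$ as a fixed point: invert $(\pt_y^2-\al^2)$ with zero Dirichlet data via the above half-line Green kernel, so that $\tU_\al = -\lambda\,\mathcal G_\al\bigl(w_\al(0)L_\al(e^{-|\al|y})+L_\al\tU_\al\bigr)$. Because $a(y)=(1+\lambda y)^{-1}$ and $b(y)=y(2+\lambda y)/(1+\lambda y)^2$ are bounded and $L_\al(e^{-|\al|y})$ decays exponentially, for $\lambda$ sufficiently small the operator $\lambda\,\mathcal G_\al\circ L_\al$ is a contraction on a weighted space that enforces the decay $\tU_\al(y)\to 0$ and $\pt_y\tU_\al(y)\to 0$ as $y\to\infty$. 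This is the main obstacle: propagating a loss of two derivatives in $L_\al$ through $\mathcal G_\al$ while keeping contraction in the relevant weighted norm; the factor $\al^{-2}$ gained from $\mathcal G_\al$ against the $\al^2 b(y)$ piece of $L_\al$ is exactly what makes the scheme close uniformly in $\al$.
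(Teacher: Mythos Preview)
Your approach is essentially identical to the paper's: the same Fourier reduction, the same decomposition $w^\star_\al = w_\al(0)e^{-|\al|y}+\wtd w^\star_\al$, and your Green's-identity computation of $-\pt_y\wtd w^\star_\al(0)$ is precisely the ``direct calculation'' the paper invokes without elaboration (the paper does not spell out the existence/decay argument you sketch in your last paragraph). Note that both computations in fact produce the weight $e^{-|\al|y}$ inside the integral, consistent with the paper's own proof and with the later formula \eqref{g-al}, rather than with the lemma display as stated.
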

\begin{proof}
We recall the definition of $w^\star$ in \eqref{w-star-eq}. Taking Fourier in $x$, we obtain 
\[
(\pt_y^2-\al^2)w_\al^\star+\lambda L_\al w_\al^\star=0,\qquad w_\al^\star=w_\al(0).
\]
Let $\wtd w_\al^\star=w_\al^\star(y)-w_\al(0)e^{-|\al|y}$, then 
\[
Nw_\al=-\pt_y w_\al^\star|_{y=0}=-\pt_y\lw(\wtd w_\al^\star+w_\al(0)e^{-|\al|y}\rw)|_{y=0}=-\pt_y\wtd w^\star_\al(0)+|\al|w_\al(0).
\]
we have 
\[
(\pt_y^2-\al^2)\wtd w_\al^\star=-\lambda L_\al \lw(w_\al(0)e^{-|\al|y}\rw)-\lambda L_\al \wtd w^\star_\al,\qquad \wtd w_\al^\star|_{y=0}=0.
\]
By a direct calculation, we have 
\[
\pt_y\wtd w_\al^\star(0)=\int_0^\infty e^{-|\al|y}\lambda \lw(w_\al(0) L_\al(e^{-|\al|y})+L_\al\wtd w_\al^\star\rw)dy.
\]
The proof is complete.
\end{proof} 

\subsection{The half-space problem}

To summarize, we have reduced the Navier-Stokes equations on the exterior disk to the following problem on the half-line $y\ge 0$: for each spatial frequency $\alpha \in \lambda \ZZ$, 
\beq\label{new-eqa}
\bega 
&(\pt_{\tau } -\nu\Delta_\alpha )w=\nu \lambda L_\alpha w+ B_\alpha(\psi,w)
\enda 
\eeq
with notation $\Delta_\alpha = \partial_y^2 - \alpha^2$, $L_\al=a(y)\pt_y-\al^2 b(y)$, and the following nonlocal boundary condition 
\beq\label{g-al}
\bega 
\nu(\pt_y+|\al|)w_\al|_{y=0}=&-\nu \lambda  \int_0^\infty e^{-|\al|y} \lw(w_\al(0) L_\al(e^{-|\al|y})+L_\al\wtd w_\al^\star\rw)dy\\
&-\pt_y(\triangle_\alpha+\lambda L_\alpha)^{-1}\lw(B_\al(\psi,w)\rw)|_{y=0}
\enda 
\eeq
where $B(\psi,w)$ and $\psi$ are defined as in \eqref{def-BBB}-\eqref{def-streamscaled}.

\subsection{Near boundary analytic spaces}\label{sec-norm}
In this section, we introduce the near boundary analytic norms to control the near boundary analyticity and the Sobolev regularity of vorticity. These norms are an adaptation from those that were introduced and developed in \cite{SammartinoCaflisch2,2N,2N1,KVW1}. 

Precisely, let $\delta_0>0$ be the size of the analytic domain for our solution near the
boundary. Throughout the paper, we fix $\rho_0\ge \delta_0$, and take $\rho\in(0,\rho_0)$. We define the complex domain
\[\bega 
\Omega_{\rho}=&\Big \{y\in \mathbb{C}:\quad 0\le \Re y\le \delta_0,\quad |\Im y|\le \rho\Re y\Big\}\\
&\bigcup \Big\{y\in \mathbb{C}:\quad \delta_0\le \Re y\le \delta_0+\rho,\quad |\Im y|\le \delta_0+\rho-\Re y\Big\}.
\enda 
\]
We note that the domain $\Omega_\rho$ only contains $y$ with $0\le \Re y\le \delta_0+\rho$. 
For a complex valued function  $f$ defined on $\Omega_\rho$, let
\[
\|f\|_{L ^1_\rho}=\sup_{0 \le \eta <\rho}\| f \|_{L^1(\pt\Omega_\eta)} , \qquad \|f\|_{L ^\infty_\rho}=\sup_{0 \le \eta <\rho}\| f \|_{L^\infty(\pt\Omega_\eta)}
\]
where the integration is taken over the two directed paths along the boundary of the
domain $\Omega_{\eta}$.
Now for an analytic function $f(x,y)$ defined on $(x,y)\in \mathbb{T}_{2\pi/\lambda}\times \Omega_\rho$, we define
\beq\label{normY-mu}
\begin{aligned}
\|f\|_{\mathcal L^1_\rho} &=\sum_{\alpha\in \lambda \mathbb{Z}}\lw\|e^{\eps_0(\delta_0+\rho- \Re y)|\alpha|}f_\alpha\rw\|_{L^1_\rho} ,
\\
 \|f\|_{\mathcal L^\infty_\rho} &=\sum_{\alpha\in \lambda \mathbb{Z}}\lw\|e^{\eps_0(\delta_0+\rho- \Re  y)|\alpha|}f_\alpha\rw\|_{L^\infty_\rho}.
 \end{aligned}
\eeq
The function spaces $\cL^1_\rho$ and $\cL_\rho^\infty$ are to control the scaled vorticity and velocity, respectively. We stress that the analyticity weight is identically zero on $\Re y \ge \delta_0+\rho$. For convenience, we also introduce the following analytic norms
\begin{equation}\label{def-W1k}\| f\|_{\cW_\rho^{k,p}} =  \sum_{i+j\le k}\|\pt_{x}^i (y\pt_{y})^j f\|_{\cL^p_\rho}
\end{equation}
for $k\ge 0$ and $p=1, \infty$. 
The above definition also applies for a function $g$ defined on the domain $\mathbb{T}_{2\pi/\lambda}$. Namely,
\beq\label{H-norm}
\|g\|_{\mathcal H_\rho^{k}}=\sum_{\al\in \lambda \Z}|\al|^k e^{\eps_0(\delta_0+\rho)|\al|}|g_\al|.
\eeq
For convenience, we also write 
\[
\|D_{x,y}^kf\|_X=\sum_{i+j\le k}\|\pt_x^i\pt_y^jf\|_X
\]
where $X$ is a function space. 
We recall the following simple algebra. 
\begin{lemma} There hold
\begin{equation}\label{Y-algebra} \| fg \|_{\cL^1_\rho} \le \| f\|_{\cL^\infty_\rho } \| g\|_{\cL^1_\rho}
\end{equation}
and for any $0<\rho'<\rho$,
\begin{equation}\label{Y-derivative}
\| \partial_{x} f \|_{\cL^1_{\rho'}} + \| y \partial_{y} f\|_{\cL^1_{\rho'}} \lesssim \frac{1}{\rho - \rho'} \| f\|_{\cL^1_\rho}.
\end{equation}
\end{lemma}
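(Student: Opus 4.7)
My plan is to prove each estimate by unfolding the Fourier-series analytic norms from (\ref{normY-mu}) and reducing to elementary convolution and contour arguments.

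For the algebra inequality (\ref{Y-algebra}), I would use convolution in the $x$-frequency, $(fg)_\alpha(y) = \sum_{\beta\in \lambda\mathbb{Z}} f_{\alpha-\beta}(y)\, g_\beta(y)$, together with the triangle inequality $|\alpha|\le |\alpha-\beta|+|\beta|$ to split the exponential weight
\[
e^{\eps_0(\delta_0+\rho-\Re y)|\alpha|} \le e^{\eps_0(\delta_0+\rho-\Re y)|\alpha-\beta|}\, e^{\eps_0(\delta_0+\rho-\Re y)|\beta|}.
\]
On each contour $\partial \Omega_\eta$, I place the $f$-factor in $L^\infty$ and the $g$-factor in $L^1$ and apply H\"older's inequality. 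Taking the supremum in $\eta\in[0,\rho)$ inside the $\beta$-sum (which only enlarges the right side) and summing first in $\beta$ then in $\alpha$ makes the two sums separate, yielding $\|fg\|_{\cL^1_\rho}\le \|f\|_{\cL^\infty_\rho}\|g\|_{\cL^1_\rho}$.

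For the derivative estimate (\ref{Y-derivative}) I treat the two pieces separately. Since $\partial_x$ acts on the Fourier mode $\alpha$ as multiplication by $i\alpha$, the loss of analyticity radius is absorbed directly into the exponential weight via
\[
|\alpha|\,e^{\eps_0(\delta_0+\rho'-\Re y)|\alpha|}= |\alpha|\,e^{-\eps_0(\rho-\rho')|\alpha|}\,e^{\eps_0(\delta_0+\rho-\Re y)|\alpha|}\lesssim \frac{1}{\rho-\rho'}\,e^{\eps_0(\delta_0+\rho-\Re y)|\alpha|},
\]
using $\sup_{s\ge 0}se^{-cs}\le 1/(ec)$; integrating in $y$ and summing in $\alpha$ gives $\|\partial_x f\|_{\cL^1_{\rho'}}\lesssim (\rho-\rho')^{-1}\|f\|_{\cL^1_\rho}$. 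For the normal piece $y\partial_y$, I would use Cauchy's formula on each Fourier coefficient: the pencil-like shape of $\Omega_\rho$ ensures that for every $y_0\in \partial\Omega_{\rho'}$ the disk $D(y_0,r_0)$ with $r_0=c(\rho-\rho')\Re y_0$ lies inside $\Omega_\rho$ for a small absolute $c$, so
\[
y_0\,\partial_y f_\alpha(y_0)=\frac{y_0}{2\pi i}\oint_{\partial D(y_0,r_0)}\frac{f_\alpha(z)}{(z-y_0)^2}\,dz.
\]
Using $|y_0|\lesssim \Re y_0 = r_0/[c(\rho-\rho')]$ on $\Omega_{\rho'}$, the prefactor is $\lesssim 1/[r_0(\rho-\rho')]$, and integrating $y_0$ along $\partial\Omega_{\rho'}$ then applying Fubini collapses the arc-integrals over $\partial D(y_0,r_0)$ into a single $L^1$-integral of $|f_\alpha|$ on a slightly fattened contour contained in $\Omega_\rho$. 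Since $r_0\le c\delta_0(\rho-\rho')$ in the pencil portion, the weight $e^{\eps_0(\delta_0+\rho'-\Re y_0)|\alpha|}$ differs from $e^{\eps_0(\delta_0+\rho-\Re z)|\alpha|}$ across each disk only by a bounded factor, so summing in $\alpha$ yields the claimed bound.

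The main difficulty I anticipate is the geometric/Fubini step for $y\partial_y$: one must verify carefully that the disks $D(y_0,r_0)$ stay inside $\Omega_\rho$ uniformly in $y_0$, including near the corner $\Re y_0=\delta_0$ where $\partial\Omega_\rho$ bends (which may require a locally smaller radius there), and that the swept tube has bounded overlap so that Fubini produces a clean $L^1$ bound without logarithmic losses. Once the geometry is fixed with a sufficiently small $c$, both estimates reduce to classical convolution and Cauchy-type inequalities.
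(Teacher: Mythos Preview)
Your proposal is correct and follows precisely the standard route the paper defers to by citing \cite{2N,2N1}: convolution plus the triangle inequality on the exponential weight for \eqref{Y-algebra}, the elementary bound $|\alpha|e^{-\eps_0(\rho-\rho')|\alpha|}\lesssim (\rho-\rho')^{-1}$ for $\partial_x$, and Cauchy's formula on disks $D(y_0,c(\rho-\rho')\Re y_0)$ inside the pencil domain for $y\partial_y$. Your identification of the geometric/Fubini step as the only point requiring care is exactly right, and the weight comparison you sketch in fact gives $e^{\eps_0(\delta_0+\rho'-\Re y_0)|\alpha|}\le e^{\eps_0(\delta_0+\rho-\Re z)|\alpha|}$ (not merely bounded ratio) once $c$ is chosen small, so no $|\alpha|$-dependent loss appears.
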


\begin{proof} The proof is direct; see \cite{2N1,2N}. 
\end{proof}
We also have the following lemma, which will be useful in controlling the velocity in the intermediate region in Section \ref{inter-sec}. We note that in the lemma below, we only give the real pointwise bounds in $L^\infty$ norm on the real line.
\begin{lemma}\label{middle}
Let $f=f(x,y)$ be analytic in $\mathbb{T}_{2\pi/\lambda}\times \Omega_\rho$ where the analyticity radius $\rho\ge \frac{\delta_0}{4}$. Then for any $\delta_1<\delta_2<\delta_0$ and $k\ge 0$, we have 
\[\bega 
\|D_{x,y}^k f\|_{L^\infty(\delta_1\le y\le \delta_2)}&\lesssim \|f\|_{\L^1_\rho} .
\enda 
\]
\end{lemma}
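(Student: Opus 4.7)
The idea is to lift the $\cL^1_\rho$ control to a pointwise $L^\infty$ bound on each Fourier mode via Cauchy's formula on a contour that stays at a positive distance from the real strip $[\delta_1,\delta_2]$, and then to absorb the $x$-derivatives using the exponential decay in $|\al|$ already built into the analytic weight.

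First I fix an auxiliary parameter $\eta_\star := \rho/2 \in (0,\rho)$. Since $0<\delta_1\le y_0\le \delta_2<\delta_0$, every real $y_0\in [\delta_1,\delta_2]$ lies in the interior of $\Omega_{\eta_\star}$. A direct geometric computation produces a lower bound
\[
\mathrm{dist}(y_0,\pt\Omega_{\eta_\star})\ge c_1 := \min\Bigl(\tfrac{\eta_\star \delta_1}{\sqrt{1+\eta_\star^2}},\,\tfrac{\delta_0-\delta_2}{\sqrt 2}\Bigr)>0,
\]
where the two quantities control the perpendicular distance from $y_0$ to the wedge sides $\Im y=\pm \eta_\star \Re y$ and to the cap sides $\Im y = \pm(\delta_0+\eta_\star-\Re y)$, respectively. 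The hypothesis $\rho\ge \delta_0/4$ then keeps $c_1$ of size at least a fixed multiple of $\delta_0\delta_1$.

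Each Fourier coefficient $f_\al$ is holomorphic in a neighborhood of $\overline{\Omega_{\eta_\star}}$, so Cauchy's integral formula gives, for every $j\ge 0$,
\[
|\pt_y^j f_\al(y_0)|\le \frac{j!}{2\pi c_1^{j+1}}\int_{\pt\Omega_{\eta_\star}} |f_\al(y)|\,|dy|.
\]
On $\pt\Omega_{\eta_\star}$ one has $\Re y\le \delta_0+\eta_\star = \delta_0+\rho/2$, hence $\delta_0+\rho-\Re y\ge \rho/2$; inserting this weight inside the integral and taking the supremum over $\eta$ in the definition of the $L^1_\rho$ norm yields
\[
\int_{\pt\Omega_{\eta_\star}} |f_\al|\,|dy|\le e^{-\eps_0(\rho/2)|\al|}\bigl\|e^{\eps_0(\delta_0+\rho-\Re y)|\al|}f_\al\bigr\|_{L^1_\rho}.
\]

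Finally, expanding $\pt_x^i \pt_y^j f(x,y) = \sum_\al (i\al)^i \pt_y^j f_\al(y)\, e^{i\al x}$ and taking the triangle inequality in $\al$, I obtain, for $i+j\le k$ and $y_0\in[\delta_1,\delta_2]$,
\[
|\pt_x^i\pt_y^j f(x,y_0)|\le \frac{j!}{2\pi c_1^{j+1}}\sum_{\al\in \lambda\mathbb{Z}} |\al|^i e^{-\eps_0(\rho/2)|\al|}\bigl\|e^{\eps_0(\delta_0+\rho-\Re y)|\al|} f_\al\bigr\|_{L^1_\rho}.
\]
Since $|\al|^i e^{-\eps_0(\rho/2)|\al|}$ is uniformly bounded by a constant depending only on $i,\eps_0,\delta_0$ (using $\rho\ge \delta_0/4$), the right-hand side is $\lesssim \|f\|_{\cL^1_\rho}$. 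Summing over $i+j\le k$ completes the bound claimed in the lemma. The only non-routine ingredient is the geometric lower bound $c_1>0$ in the choice of contour; everything else is a standard marriage of Cauchy's formula with the Fourier-side decay provided by the analytic weight.
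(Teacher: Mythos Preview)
Your proof is correct and takes a genuinely different, more direct route than the paper. The paper argues case by case: for $\pt_x^k f$ it writes $yf_\al(y)=\int_0^y\pt_z(zf_\al(z))\,dz$, splits into $\int|f_\al|$ and $\int|z\pt_z f_\al|$, and controls the latter via the already-proved scalar Cauchy estimate $\|y\pt_y f\|_{\cL^1_{\rho'}}\lesssim (\rho-\rho')^{-1}\|f\|_{\cL^1_\rho}$; the $\pt_y$-derivatives are then handled by a separate but analogous computation. You instead apply the full Cauchy integral formula for $\pt_y^j f_\al$ on the single contour $\pt\Omega_{\rho/2}$, using two clean observations: (i) the contour stays at uniform positive distance $c_1$ from the real strip $[\delta_1,\delta_2]$, and (ii) on that contour $\delta_0+\rho-\Re y\ge \rho/2$, so the analytic weight yields a factor $e^{-\eps_0(\rho/2)|\al|}$ that absorbs any power $|\al|^i$. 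This treats all mixed derivatives $\pt_x^i\pt_y^j$ in one stroke and makes transparent why the hypothesis $\rho\ge\delta_0/4$ enters (it keeps both $c_1$ and the exponential gain bounded below in terms of $\delta_0,\delta_1,\delta_2$ alone). The paper's approach has the small advantage of reusing its derivative-recovery lemma and avoiding the contour-geometry discussion; your approach is shorter and more conceptual. One cosmetic point: your explicit formula for $c_1$ presumes the wedge and cap pieces of $\pt\Omega_{\eta_\star}$ meet without additional vertical segments at $\Re y=\delta_0$, which need not hold for general $\delta_0$; the qualitative claim $c_1>0$ is unaffected, but you may want to phrase the lower bound as ``a positive constant depending only on $\delta_0,\delta_1,\delta_2$'' rather than the displayed minimum.
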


\begin{proof} We first prove the bound for $\pt_x^k f$. Since $\pt_x^k f=\sum_\al e^{i\al x}(i\al)^k f_\al$, we have 
\[
\|\pt_x^kf\|_{L^\infty( \delta_1\le y\le \delta_2)}\lesssim \sum_{\al} |\al|^k \|yf_\al(y)\|_{L^\infty(\delta_1\le y\le \delta_2)},
\]
noting $y\ge \delta_1$. Now for any $y\le \delta_2$, we have
\beq\label{yf}
\bega
|yf_\al(y)|&=\lw|\int_0^y \pt_z(zf_\al(z))dz\rw|\le \int_0^{\delta_2} |f_\al(z)|dz+\int_0^{\delta_2} |z\pt_z f_\al(z)|dz. 
\enda
\eeq
For the first integral, we have
\beq\label{yf1}
\int_0^{\delta_2}|f_\al(z)|dz=\int_0^{\delta_2}e^{-\eps_0(\delta_0+\rho-z)|\al|}e^{\eps_0(\delta_0+\rho-z)|\al|}|f_\al(z)|dz\le e^{-\eps_0(\delta_0-\delta_2)|\al|}\|f_\al\|_{\L^1_\rho}.
\eeq
For the second integral, we first use the estimate \eqref{Y-derivative} to get
\beq\label{yf2}
\int_0^{\delta_2}|z\pt_z f_\al(z)|dz\lesssim \frac{1}{\delta_0+\frac{\delta_0}{8}-\delta_2}\|f_\al\|_{\L^1_{\delta_0/8}}\lesssim e^{-\eps_0|\al|(\delta_0/8)} \|f_\al\|_{\L^1_\rho},
\eeq
where in the last inequality, we have used the fact that 
\[
e^{\eps_0|\al|(\delta_0+\delta_0/8-\Re z)}\le e^{-\eps_0|\al|\delta_0/8} e^{\eps_0|\al|(\delta_0+\rho-\Re z)}\qquad \text{for}\quad \Re z\le \delta_0\quad \text{and}\quad \rho\ge \delta_0/4.
\]
Combining the inequalities \eqref{yf}, \eqref{yf1} and \eqref{yf2}, we get 
\[
|yf_\al(y)|\lesssim \lw( e^{-\eps_0(\delta_0-\delta_2)|\al|}+e^{-\eps_0|\al|(\delta_0/8)}\rw)\|f_\al\|_{\L^1_\rho}
\]
The proof for $\pt_x^k f$ is complete, by multiplying both sides of the above inequality by $|\al|^k$ and summing all over $\al$.
Similarly, we compute 
\[\bega
\|\pt_y f\|_{L^\infty(\delta_1\le y\le \delta_0)}&\lesssim \|y^2\pt_yf\|_{L^\infty(\delta_1\le y\le \delta_0)}\le \sum_{\al}\|y^2\pt_yf_\al\|_{L^\infty(\delta_1\le y\le\delta_0)}\\
&\lesssim \sum_{\al} \|y\pt_yf_\al\|_{L^1(y\le \delta_0)}+\sum_{\al}\|y^2\pt_y^2f_\al\|_{L^1(y\le \delta_0)}\\
&\lesssim \|f\|_{\L^1_{\delta_0/4}}\le \|f\|_{\L^1_\rho}.
\enda 
\]
where we use the Cauchy estimate and the fact that $\rho\ge \frac{\delta_0}{4}$. The estimates on higher derivatives follow similarly. 
\end{proof}

\section{The Stokes problem}\label{sec3}
In this section, we consider the Stokes problem in the exterior domain, written in the rescaled geodesic coordinates:
\beq\label{st-ext1}
\bega
(\pt_\tau-\nu\triangle - \nu \lambda L)w&= f\\
\nu(\pt_z+N)w|_{z=0}&=g
\enda 
\eeq
where $L=a(z)\pt_z+b(z)\pt_x^2$ is the linear operator defined in \eqref{def-LL} and $N$ is the Dirichlet-Neumann operator. The main result of this section is to provide uniform estimates on the solution of \eqref{st-ext1} in the Sobolev-analytic spaces. Precisely, we have the following theorem. 

\begin{theorem}\label{theo-Stokes}
Let $e^{\nu t S}$ be the semigroup of the linear Stokes problem \eqref{st-ext1}, and let $\Gamma^S(\nu t)$ be its trace on the boundary. Fix any finite time $T$.
Then, for sufficiently small $\lambda$, and for any $0\le t\le T$, $\rho>0$, and $k\ge 0$, there hold
\begin{equation}\label{exp-realSW}
\begin{aligned}
\| e^{\nu t S} w_0\|_{\cW^{k,1}_\rho} &\le  C_0 \| w_0\|_{\mathcal W_\rho^{k,1}}+\|y^2D_{x,y}^{k+1} w_0\|_{L^2(y\ge \delta_0+\rho)}
\\
\| \Gamma^S(\nu t)g_b\|_{\cW^{k,1}_\rho} &\le C_0 \|g_b\|_{\mathcal H_\rho^{k}}
\end{aligned}\end{equation}
uniformly in the inviscid limit, where $\|\cdot\|_{\mathcal W_\rho^{k,1}},\|\cdot\|_{\mathcal H_\rho^{k}}$ are near boundary analytic norms defined in \eqref{def-W1k} and \eqref{H-norm}, respectively. 
\end{theorem}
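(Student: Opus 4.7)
The plan is to treat the rescaled Stokes operator $\partial_\tau - \nu \Delta - \nu \lambda L$ with its nonlocal boundary condition as a small perturbation of the half-space Stokes operator $\partial_\tau - \nu \Delta$ with boundary condition $\nu(\partial_y + |\partial_x|)w|_{y=0} = g$, whose semigroup $e^{\nu t S_0}$ and trace operator $\Gamma_0^S(\nu t)$ already satisfy the required uniform analytic estimates by Proposition 3.1 of \cite{2N} (with the misprint correction noted in the footnote). Both the interior correction $\nu \lambda L$ and the boundary correction $N - |\partial_x|$ carry a prefactor of $\lambda$, the latter by Lemma \ref{N-def}, so for $\lambda$ sufficiently small (independent of $\nu$) the perturbation can be absorbed via the Duhamel expansion
\[
e^{\nu t S}w_0 = e^{\nu t S_0}w_0 + \int_0^t e^{\nu(t-s)S_0}\bigl[\nu \lambda L\,e^{\nu s S}w_0\bigr]\,ds + \int_0^t \Gamma_0^S(\nu(t-s))\bigl[\nu \lambda K\, e^{\nu s S}w_0|_{y=0}\bigr]\,ds,
\]
where $K := \lambda^{-1}(N - |\partial_x|)$, together with an analogous expansion for $\Gamma^S(\nu t)g_b$, and closed via a Cauchy--Kovalevskaya fixed-point argument with a slightly shrinking analyticity radius that is ultimately absorbed into the constant $C_0$ on the right-hand side of \eqref{exp-realSW}.

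The first key point is the control of the interior term $\lambda L = \lambda(a(y)\partial_y + b(y)\partial_x^2)$, which loses one $y$-derivative and two $x$-derivatives. These losses are paid by the Cauchy estimate \eqref{Y-derivative} together with the tangential analytic weight $e^{\eps_0(\delta_0 + \rho - \Re y)|\alpha|}$, each derivative costing a factor $(\rho(s) - \rho(t))^{-1}$ that is integrable in the Cauchy--Kovalevskaya scheme provided $\nu\lambda$ is small. The loss is only active in the near-boundary region $\Re y \le \delta_0 + \rho$ where the weight is nontrivial; beyond $\delta_0 + \rho$ the weight vanishes and the dynamics reduce to a standard weighted parabolic estimate, with the $y^2$ decay factor on the right-hand side of \eqref{exp-realSW} capturing the $r^{-2}$ decay of the vorticity in the original coordinates. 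The matching between the two regimes is organized via a cutoff $\chi(y)$ supported in $\{y \le \delta_0+\rho+1\}$; the commutator $[\chi, \Delta]$ is localized in the transition strip where the analytic weight is bounded, and is controlled by standard energy estimates feeding into the interior Sobolev term on the right-hand side.

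The boundary corrector $\lambda K$ is controlled via Lemma \ref{N-def} by inverting $\partial_y^2 - \alpha^2 + \lambda L_\alpha$ with zero Dirichlet data through a Neumann series around $\partial_y^2 - \alpha^2$, yielding $|Kw_\alpha(0)| \lesssim |\alpha|\,|w_\alpha(0)|$ in the near-boundary analytic norm. The main obstacle is that the half-space trace semigroup estimate for $\Gamma_0^S$ carries a tangential derivative loss on the boundary (the very term highlighted in the footnote correction of \cite[Proposition 3.1]{2N}), so each iteration of the boundary Duhamel against $\lambda K$ must be compensated by the smallness of $\nu\lambda$ together with the time integration of the parabolic factor $(\nu(t-s))^{-1/2}$. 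Arranging this derivative-loss bookkeeping so that both the interior and boundary expansions close simultaneously — in particular ensuring that the shrinking of the analytic radius accommodates the $\partial_x^2$ loss from $b(y)\partial_x^2$ while the smallness of $\lambda$ absorbs the boundary derivative loss — is where the fixed-point argument is most delicate, but once it is set up both estimates in \eqref{exp-realSW} follow uniformly in the inviscid limit with constants depending only on $T, \delta_0, k$.
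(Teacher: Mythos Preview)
Your overall framework---Duhamel around the half-space Stokes semigroup, with $\nu\lambda L$ as the interior perturbation and $\lambda K = N - |\partial_x|$ as the boundary perturbation---is the same as the paper's. The boundary part is essentially right; in fact Proposition~\ref{h-est} gives the sharper bound $|h_\alpha| \lesssim \lambda\nu\,|w_\alpha(0)|$ (no extra $|\alpha|$), so the boundary iteration is easier than you suggest.

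The genuine gap is in your treatment of the interior term $\nu\lambda\, b(y)\partial_x^2$. You propose to pay the two $x$-derivatives via the Cauchy estimate \eqref{Y-derivative}, each costing $(\rho(s)-\rho(t))^{-1}$. But two derivatives cost $(\rho(s)-\rho(t))^{-2}$, and in any Cauchy--Kovalevskaya scheme with shrinking radius $\rho(s)=\rho_0-\beta s$ the time integral $\int_0^t (\beta(t-s))^{-2}\,ds$ diverges at $s=t$; the prefactor $\nu\lambda$ does not cure a divergent integral. The paper's mechanism is structurally different and hinges on the observation (already flagged in Section~\ref{sec-difficulty}) that $b(y)=y\cdot\frac{2+\lambda y}{(1+\lambda y)^2}$ \emph{vanishes at the boundary}. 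In the near-boundary region $\Re y\le \delta_0+\rho$ this gives $|b(y)|\lesssim y\lesssim 1$, and the two tangential derivatives are absorbed not by shrinking $\rho$ but by the parabolic factor in the heat kernel $H_\alpha$: since $\int_0^\tau \alpha^2\nu\, e^{-\alpha^2\nu(\tau-s)}\,ds \le 1$, the $H_\alpha$-contribution of $\nu\lambda b(y)\partial_x^2 w$ yields
\[
\alpha^2\nu\int_0^\tau e^{-\alpha^2\nu(\tau-s)}\,\|b(y)w_\alpha(s)\|\,ds \;\lesssim\; \sup_{0\le s\le\tau}\|yw_\alpha(s)\|,
\]
i.e.\ \emph{no} net derivative loss (see the first step in the proof of Proposition~\ref{nuL}). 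The residual kernel $R_\alpha$ contributes only a \emph{single} derivative loss, with prefactor $\nu$. It is this single loss---together with the single loss appearing in the boundary-trace estimate of Proposition~\ref{at-bdr}---that the paper's two-tier norm \eqref{main-norm} with weight $(\rho_0-\rho-\beta\tau)^\gamma$ is designed to close. Without exploiting $b(0)=0$ and the heat-kernel time integration, a pure Cauchy--Kovalevskaya bookkeeping of the $\partial_x^2$ loss cannot close.
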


The proof relies on the analytic estimates for solutions of the Stokes problem on the half-space. Indeed, in view of Lemma \ref{N-def}, 
we can rewrite the boundary condition in \eqref{st-ext1} as follows:
\[
\nu(\pt_z+|\al|)w_\al|_{z=0}= g_\al + h_\al
\]
where 
\beq\label{h-al}
h_\al=-\lambda\nu \int_0^\infty\lw(w_\al(0)L_\al(e^{-|\al|y})+L_\al\wtd w_\al^\star\rw)dy
\eeq
and $\wtd w_\al^\star$ solves the elliptic problem \eqref{w-star-eq}. Therefore, 
we obtain the following Duhamel principle for solution of \eqref{st-ext1},
\beq\label{duh}
\bega
w(\tau)=&e^{\nu \tau B}w_0+\int_0^\tau e^{\nu (\tau-s)B}(\nu \lambda L w)(s)ds+\int_0^\tau e^{\nu(\tau-s)B}f(s)ds\\
&+\int_0^\tau \Gamma(\nu(\tau-s))g(s)ds+\int_0^\tau \Gamma(\nu(\tau-s))h(s)ds,
\enda 
\eeq
where $e^{\nu \tau B}$ is the Stokes semigroup on the half-space and $\Gamma(\nu {\tau })$ denotes its trace on the boundary. To estimate each term on the right hand side, we first recall the following results from \cite{2N,2N1}, which give the Duhamel formula, Green functions estimates, and semigroup bounds for the Stokes problem on the half-space in the vorticity boundary condition.

\begin{theorem}[\cite{2N,2N1}]
Consider the Stokes problem 
\beq\label{st-hlf}
\bega
(\pt_\tau-\nu\triangle)W&=F\\
\nu(\pt_z+|\pt_x|)W|_{z=0}&=g_b\\
W|_{\tau=0}&=W_0
\enda
\eeq
on the analytic-Sobolev domain $(x,y)\in \mathbb{T}_{2\pi/\lambda}\times \{ \Omega_\rho\cup\{y\ge \delta_0+\rho\} \}$.
The solution to \eqref{st-hlf} can be written as 
\[
W(\tau)=e^{\nu \tau B}W_0+\int_0^{\tau}e^{\nu(\tau-s)B}F(s)ds+\int_0^\tau \Gamma(\nu(\tau-s)) g_b(s)ds
\]
where $e^{\nu \tau B}$ is the Stokes semigroup of the problem \eqref{st-hlf} and $\Gamma(\nu {\tau })$ denotes its trace on the boundary. 
Moreover, there hold the following semigroup estimates
\beq\label{semigroup}
\bega
\|e^{\nu \tau B}W_0\|_{\mathcal W_\rho^{k,1}} &\lesssim \|W_0\|_{\mathcal W_\rho^{k,1}}+\|yD^{k+1}_{x,y} W_0\|_{L^2(y\ge \delta_0+\rho)},\\
\|e^{\nu(\tau-s)B}F(s)\|_{\mathcal W_\rho^{k,1}} &\lesssim \|F(s)\|_{\mathcal W_\rho^{k,1}}+\|yD^{k+1}_{x,y} F(s)\|_{L^2(y\ge \delta_0+\rho)},\\
\|\Gamma(\nu(\tau-s)) g_b(s)\|_{\mathcal W_\rho^{k,1}} &\lesssim \|g_b(s)\|_{\mathcal H_\rho^{k}},
\enda 
\eeq
where $\|\cdot\|_{\mathcal W_\rho^{k,1}},\|\cdot\|_{\mathcal H_\rho^{k}}$ are analytic norms defined in \eqref{def-W1k} and \eqref{H-norm}. 

In addition, the Fourier coefficients $e^{\nu (\tau-s)B_\alpha}$ of the semigroup $e^{\nu (\tau-s)B}$ have a Green kernel representation $G_\al(\tau,y,z)$, in the sense that, for any $\tau \ge 0$, one has
\[
e^{\nu \tau B_\alpha}F_\al(\tau,y)(z)=\int_0^\infty G_\al(\tau,y,z)F_\al (\tau,y)dy
\]
with the decomposition $G_\al(\tau,y,z)=H_\al(\tau,y,z)+R_\al(\tau,y,z)$, in which $H_\alpha({\tau },y;z) $ is exactly the one-dimensional heat kernel with the homogenous Neumann boundary condition and $R_\alpha({\tau },y;z)$ is the residual kernel due to the boundary condition. Precisely, there hold
\beq\label{Stokes-green}
\begin{aligned}
H_\alpha({\tau },y;z) & = \frac{1}{\sqrt{\nu {\tau }}} \Big( e^{-\frac{|y-z|^{2}}{4\nu {\tau }}} +  e^{-\frac{|y+z|^{2}}{4\nu {\tau }}} \Big) e^{-\alpha^{2}\nu {\tau }}, 
\\ | \partial_z^k R_\alpha (\tau,y;z)| &\lesssim \mu_f^{k+1} e^{-\theta_0\mu_f |y+z|} +  (\nu t)^{-\frac{k+1}{2}}e^{-\theta_0\frac{|y+z|^{2}}{\nu {\tau }}}  e^{-\frac18 \alpha^{2}\nu \tau} ,
\end{aligned}
\eeq
for $y,z\ge 0$, $k\ge 0$, and for some $\theta_0>0$ and for $\mu_f = |\alpha| + \frac{1}{\sqrt \nu}$. 
\end{theorem}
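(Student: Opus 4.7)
First, I would take the Fourier transform in $x$ with frequency $\alpha\in\lambda\ZZ$, decoupling the half-plane problem into one-dimensional heat equations $(\partial_\tau-\nu(\partial_y^2-\alpha^2))W_\alpha=F_\alpha$ on $y>0$ with Robin condition $\nu(\partial_y+|\alpha|)W_\alpha|_{y=0}=(g_b)_\alpha$, and then apply the Laplace transform in $\tau$ with variable $\sigma$ to reduce each mode to the resolvent equation $(\sigma-\nu(\partial_y^2-\alpha^2))\hat W_\alpha=\hat F_\alpha$. I would solve this by variation of parameters, imposing decay at infinity together with the Robin condition. Setting $\mu=\sqrt{\sigma/\nu+\alpha^2}$, a direct computation yields the resolvent Green kernel
\[
\hat G_\alpha(\sigma,y,z)=\frac{1}{2\nu\mu}\bigl(e^{-\mu|y-z|}+e^{-\mu(y+z)}\bigr)+\frac{|\alpha|\,e^{-\mu(y+z)}}{\nu\mu(\mu-|\alpha|)},
\]
which separates naturally into the pure Neumann piece $\hat H_\alpha$ and the Robin correction $\hat R_\alpha$. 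Integrating $\hat G_\alpha$ against $\hat W_0$, $\hat F_\alpha$, and the boundary data $\hat g_b$ in the usual way yields the claimed Duhamel representation.

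Second, I would invert the Laplace transform by contour deformation to extract $G_\alpha(\tau,y,z)=H_\alpha+R_\alpha$ together with the pointwise bounds \eqref{Stokes-green}. The Neumann piece inverts to the explicit one-dimensional Neumann heat kernel by a Gaussian integral, producing the first line of \eqref{Stokes-green}. For the residual, the key algebraic identity $(\mu-|\alpha|)^{-1}=\nu(\mu+|\alpha|)/\sigma$ rewrites $\hat R_\alpha$ as $\tfrac{|\alpha|(\mu+|\alpha|)}{\mu\sigma}e^{-\mu(y+z)}$, exposing a pole at $\sigma=0$ that corresponds to the slow Prandtl-layer eigenmode $\mu=|\alpha|$. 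Deforming the Bromwich contour to a parabolic path with $\Re\sigma=-\theta_0(\nu\alpha^2+1/\tau)$ and splitting the result into a residue near $\sigma=0$ plus a contour remainder produces the two pieces claimed in the second line of \eqref{Stokes-green}: the boundary-layer contribution $\mu_f^{k+1}e^{-\theta_0\mu_f(y+z)}$ with Prandtl scale $\mu_f=|\alpha|+1/\sqrt{\nu}$, and the Gaussian remainder $(\nu\tau)^{-(k+1)/2}e^{-\theta_0(y+z)^2/(\nu\tau)}e^{-\alpha^2\nu\tau/8}$.

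Third, I would translate these kernel bounds into the semigroup estimates \eqref{semigroup}. In the analyticity pencil $\{\Re y\le\delta_0+\rho\}$, I would shift the $y$-integration to $\partial\Omega_\eta$ for each $\eta<\rho$; since $H_\alpha$ and $R_\alpha$ extend analytically in $y$ with the same Gaussian and exponential decay in $|\Re y-\Re z|$, the convolution preserves the pencil structure, and the tangential weight $e^{\varepsilon_0(\delta_0+\rho-\Re y)|\alpha|}$ is preserved because $\theta_0\mu_f-\varepsilon_0|\alpha|\ge\theta_0/\sqrt{\nu}$ stays uniformly positive once $\varepsilon_0$ is fixed small relative to $\theta_0$. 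Summing against the tangential weight in $\alpha$ then gives the first two bounds of \eqref{semigroup} for the semigroup and source convolutions; the trace estimate follows by the same mechanism applied to boundary-data convolutions, where the extra $\mu_f^{k+1}$ from $\partial_z^k R_\alpha|_{z=0}$ is absorbed by the $|\alpha|^k$ weight in $\mathcal H^k_\rho$ together with the short-time integrability against the Gaussian remainder. In the far region $y\ge\delta_0+\rho$ where the analytic weight vanishes, the residual $R_\alpha$ is exponentially small and only $H_\alpha$ matters, so a standard weighted $L^2$ heat estimate produces the Sobolev term $\|yD^{k+1}_{x,y}\cdot\|_{L^2}$, with the $y$-weight absorbing polynomial moments of the heat kernel across the matching interface.

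The main obstacle will be uniform control of $R_\alpha$ in the inviscid limit, since the Prandtl scale $\mu_f\sim 1/\sqrt{\nu}$ blows up and $(\nu\tau)^{-(k+1)/2}$ is time-singular: one must carefully balance the boundary-layer decay $e^{-\theta_0\mu_f(y+z)}$ simultaneously against the tangential analyticity weight and against the negative powers of $\nu\tau$, since neither exponential alone suffices. The resolution lies in choosing the deformed Bromwich contour so that $\Re\mu$ is comparable to $\mu_f$ uniformly along the contour, which is exactly what simultaneously produces the Gaussian-in-$y$ decay and the $e^{-\alpha^2\nu\tau/8}$ dissipation factor needed to close the $\cW^{k,1}_\rho$ estimates uniformly in $\nu$.
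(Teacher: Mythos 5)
Your overall strategy is the one the paper inherits from \cite{2N,2N1}, which it cites without re-proving: Fourier in $x$, Laplace in $\tau$, explicit resolvent kernel for the Robin boundary condition, decomposition into the Neumann heat kernel plus a residual, contour deformation for the pointwise bounds in \eqref{Stokes-green}, and finally transfer of those bounds to the Sobolev-analytic norms by shifting the $y$-integration to $\partial\Omega_\eta$ while handling the far field $\{y\ge\delta_0+\rho\}$ with a weighted $L^2$ heat estimate. Your resolvent formula, the coefficient $C=\frac{\mu+|\alpha|}{2\nu\mu(\mu-|\alpha|)}$, the simplification $\hat R_\alpha=\frac{|\alpha|(\mu+|\alpha|)}{\mu\sigma}e^{-\mu(y+z)}$, and the observation that the analyticity weight survives the $y$-convolution because $\theta_0\mu_f-\varepsilon_0|\alpha|\gtrsim\theta_0/\sqrt\nu$ are all correct and are exactly the mechanisms used in the cited references.

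The one step that does not go through as sketched is the attribution of the residue to the boundary-layer piece $\mu_f^{k+1}e^{-\theta_0\mu_f(y+z)}$. The residue of $e^{\sigma\tau}\hat R_\alpha$ at $\sigma=0$ (where $\mu=|\alpha|$) is $2|\alpha|\,e^{-|\alpha|(y+z)}$, which decays only at the slow tangential rate $|\alpha|$, not at the rate $\theta_0\mu_f$ with $\mu_f=|\alpha|+1/\sqrt\nu$. In fact the inverse Laplace transform of $\hat R_\alpha$ admits the closed form
\begin{equation*}
R_\alpha(\tau,y,z)=|\alpha|\,e^{-|\alpha|(y+z)}\,\mathrm{erfc}\!\left(\frac{y+z}{2\sqrt{\nu\tau}}-|\alpha|\sqrt{\nu\tau}\right),
\end{equation*}
obtained from the partial-fraction split $\frac{1}{\mu(\mu-|\alpha|)}=\frac{1}{|\alpha|}\bigl(\frac{1}{\mu-|\alpha|}-\frac{1}{\mu}\bigr)$ and the standard table entry for $\mathcal L^{-1}[e^{-a\sqrt s}/(\sqrt s+b)]$. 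When the erfc argument is nonnegative, i.e.\ $y+z\ge 2|\alpha|\nu\tau$, the identity $|\alpha|(y+z)+\bigl(\tfrac{y+z}{2\sqrt{\nu\tau}}-|\alpha|\sqrt{\nu\tau}\bigr)^2=\tfrac{(y+z)^2}{4\nu\tau}+\alpha^2\nu\tau$ produces exactly the Gaussian piece of \eqref{Stokes-green}. When the argument is negative, $\mathrm{erfc}\le 2$ gives $2|\alpha|e^{-|\alpha|(y+z)}$, and this reduces to $\mu_f e^{-\theta_0\mu_f(y+z)}$ only in the regime $|\alpha|\sqrt\nu\gtrsim 1$ where $\theta_0\mu_f\le|\alpha|$. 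In the complementary low-frequency regime $|\alpha|\sqrt\nu\ll 1$ with $\alpha^2\nu\tau\gg 1$, the slow Prandtl tail $|\alpha|e^{-|\alpha|(y+z)}$ is not absorbed by either term of the stated bound; the argument needs to be reorganized there (for instance by tracking a slow $|\alpha|^{k+1}e^{-c|\alpha|(y+z)}$ mode separately, or by recalling exactly how \cite{2N} handles this case), rather than folding the residue wholesale into the $\mu_f$-scale boundary-layer term as you have done.
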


We now estimate each term on the right of \eqref{duh}. The terms involving initial data, $f$, and $g$ are already estimated using the semigroup estimates in \eqref{semigroup}. We now estimate the second and last terms on the right of \eqref{duh}. We start with the linear term $\nu\lambda Lw$. 

\begin{proposition}\label{nuL}
Let $L=a(y)\pt_y+b(y)\pt_x^2$. There holds, for $0\le k\le 2$:
\[\bega 
\lw\|\int_0^{\tau } e^{\nu(\tau -s)B}(\nu \lambda Lw)ds\rw\|_{\mathcal W_\rho^{k,1}}
&\le C \lambda  \sup_{0\le s\le \tau }\lw(\|w\|_{\mathcal W^{k,1}_\rho}+\|y^2D^{k+1}_{x,y}w\|_{L^2(y\ge \delta_0+\rho)}\rw)\\
&\quad+C\lambda \nu \int_0^{\tau }\|w(s)\|_{\mathcal W^{k+1,1}_\rho}ds
+C\lambda \nu  \int_0^{\tau } \|w(s)|_{\{z=0\}}\|_{\mathcal H_\rho^{k}}ds .
\enda 
\]
Here, the constant $C$ is independent of $\lambda$.
\end{proposition}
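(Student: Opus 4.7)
The plan is to apply the half-space Stokes semigroup estimate \eqref{semigroup} directly to the forcing $F=\nu\lambda Lw$ inside the Duhamel integral, and to exploit the structure of $L = a(y)\partial_y + b(y)\partial_x^2$, where $a(y) = 1/(1+\lambda y)$ is bounded analytic and $b(y) = y\tilde b(y)$ with $\tilde b(y) = (2+\lambda y)/(1+\lambda y)^2$ bounded analytic on $\Omega_\rho$ for $\lambda$ small. The derivative-loss term $C\lambda\nu\int_0^\tau\|w\|_{\mathcal W^{k+1,1}_\rho}\,ds$ follows most directly: by the algebra \eqref{Y-algebra} applied to the analytic multipliers $a,\tilde b$ and the fact that the factor $y$ in $b$ can be merged with $\partial_x$ to produce an allowed weighted derivative rather than a second derivative loss, one obtains $\|Lw\|_{\mathcal W^{k,1}_\rho}\lesssim\|w\|_{\mathcal W^{k+1,1}_\rho}$, which combined with \eqref{semigroup} yields the claim. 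Similarly, on $y\ge\delta_0+\rho$ one has $\lambda|a(y)|\lesssim \min(\lambda,1/y)$ and $\lambda|b(y)|\lesssim 1$, so that $\nu\lambda Lw$ has bounded coefficients; the far-field $L^2$ bound in \eqref{semigroup}, applied with $F=\nu\lambda Lw$ and after one integration by parts in $y$ to trade a derivative for a $y$-weight, produces the term $\lambda\|y^2 D^{k+1}_{x,y}w\|_{L^2(y\ge\delta_0+\rho)}$ upon using $\nu\le\lambda$.

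The main difficulty is recovering the sharp term $\lambda\sup_{0\le s\le \tau}\|w\|_{\mathcal W^{k,1}_\rho}$ (with neither a $\nu$ prefactor nor a derivative loss) together with the boundary trace contribution $\lambda\nu\int_0^\tau\|w|_{z=0}\|_{\mathcal H^k_\rho}\,ds$. To this end, I would use the explicit Green kernel representation $G_\alpha = H_\alpha + R_\alpha$ from \eqref{Stokes-green}. In each frequency $\alpha$ one writes
\[
e^{\nu(\tau-s)B_\alpha}(\nu\lambda L_\alpha w_\alpha)(y) = \nu\lambda\int_0^\infty G_\alpha(\tau-s,y,z)\bigl[a(z)\partial_z w_\alpha(s,z) - \alpha^2 b(z)w_\alpha(s,z)\bigr]\,dz,
\]
and integrates by parts in $z$ to move the $\partial_z$ onto $G_\alpha\cdot a$. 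The boundary term at $z=0$ has the form $\nu\lambda\, a(0)\, G_\alpha(\tau-s,y,0)\, w_\alpha(s,0)$; after taking the $\mathcal W^{k,1}_\rho$-norm in $y$, summing over $\alpha$ against the weight $e^{\eps_0(\delta_0+\rho)|\alpha|}$, and integrating in $s$, this produces exactly the trace contribution $\lambda\nu\int_0^\tau\|w|_{z=0}\|_{\mathcal H^k_\rho}\,ds$. The interior remainder now carries $\partial_z G_\alpha$, for which \eqref{Stokes-green} gives $\|\partial_z G_\alpha(\tau-s,\cdot,z)\|_{L^1_y}$ bounded by $(\nu(\tau-s))^{-1/2}$ plus lower-order viscous-boundary-layer contributions of order $\nu^{-1/2}$; multiplying by $\nu\lambda$ and integrating in $s$ gives a factor $\lambda\sqrt{\nu\tau}\lesssim \lambda$ uniformly on $[0,T]$, yielding the desired $\lambda\sup\|w\|_{\mathcal W^{k,1}_\rho}$ contribution. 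The $-\alpha^2 b(z)w_\alpha$ piece is handled analogously: the $z$-factor in $b(z) = z\tilde b(z)$ absorbs one $\partial_x$ through the weighted derivative structure, and the remaining $|\alpha|$ is controlled by the $e^{-c\alpha^2\nu(\tau-s)}$ factor in $G_\alpha$, producing the same $(\nu(\tau-s))^{-1/2}$ smoothing in $L^1_y$.

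The principal obstacle is therefore the simultaneous bookkeeping of (i) the one-derivative loss induced by $L$ and (ii) the small prefactors $\nu,\lambda$: a naive application of \eqref{semigroup} to $F=\nu\lambda Lw$ produces only the weakest $\lambda\nu\int\|w\|_{\mathcal W^{k+1,1}_\rho}\,ds$ term and cannot deliver the sharper $\lambda\sup\|w\|_{\mathcal W^{k,1}_\rho}$; the latter is recovered only through the Green-kernel integration by parts in $z$ together with the smoothing bounds on $\partial_z G_\alpha$ from \eqref{Stokes-green}. Care is also needed to ensure that the analytic extensions of $a$ and $b$ are uniformly bounded on $\Omega_\rho$, which requires $\lambda$ sufficiently small relative to the analyticity radius $\rho_0$.
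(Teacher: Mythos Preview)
Your treatment of the $a(y)\partial_y$ piece is essentially correct and matches the paper: integrate by parts in $z$ against $G_\alpha=H_\alpha+R_\alpha$, collect the boundary term $\nu\lambda\, G_\alpha(\tau-s,y,0)\,w_\alpha(s,0)$ as the trace contribution, and use the $L^1$ bound on $\partial_z G_\alpha$ together with the $\nu\lambda$ prefactor and the time integration.

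The gap is in the $b(y)\partial_x^2$ piece. Your claim that ``the factor $y$ in $b$ can be merged with $\partial_x$ to produce an allowed weighted derivative'' is incorrect: the weighted derivative in $\mathcal W^{k,1}_\rho$ is $y\partial_y$, not $y\partial_x$, so $b(y)\partial_x^2 w$ genuinely costs two $x$-derivatives and the estimate $\|Lw\|_{\mathcal W^{k,1}_\rho}\lesssim\|w\|_{\mathcal W^{k+1,1}_\rho}$ fails. The same objection applies to ``the $z$-factor in $b(z)$ absorbs one $\partial_x$ through the weighted derivative structure'' in your Green-kernel paragraph. What actually recovers the two derivatives is different for the two kernel pieces. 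For the heat part $H_\alpha$ there is no spatial absorption at all; instead one uses the full time-integral gain
\[
\nu\lambda\,\alpha^2\int_0^\tau e^{-(1-\eps_0)\alpha^2\nu(\tau-s)}\,ds \;\lesssim\; \lambda,
\]
which removes both factors of $|\alpha|$ at once and yields the sharp term $\lambda\sup_{s}\bigl(\|w\|_{\mathcal W^{k,1}_\rho}+\|y^2D^{k+1}_{x,y}w\|_{L^2}\bigr)$; the bound $b(y)\le 2y$ is only used to produce the correct $y$-weight on the far-field $L^2$ piece. For the residual part $R_\alpha$, one factor of $|\alpha|$ is absorbed not by any norm structure but by the kernel decay itself, via $|\alpha|\,y\,e^{-\theta_0\mu_f y}\lesssim 1$ (since $\mu_f\ge|\alpha|$); the remaining $\nu\lambda|\alpha|$ then gives, after time integration, the term $\lambda\nu\int_0^\tau\|w\|_{\mathcal W^{k+1,1}_\rho}\,ds$. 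Your sketch conflates these two mechanisms, and the $(\nu(\tau-s))^{-1/2}$ smoothing you invoke for the ``remaining $|\alpha|$'' cannot work for $H_\alpha$ because the first $|\alpha|$ was never absorbed there.

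Two smaller points: the inequality $\nu\le\lambda$ is not assumed anywhere and should not be used; and the far-field contribution does not require an integration by parts to trade a derivative for a $y$-weight---the bound $b(y)\le 2y$ together with $\|yw_\alpha\|_{L^1(y\ge\delta_0+\rho)}\lesssim\|y^2w_\alpha\|_{L^2(y\ge\delta_0+\rho)}$ already delivers the stated weight.
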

\begin{proof}
We focus on the case when $k=0$; the other cases are similar. 
Recalling $L=a(y)\pt_y+b(y)\pt_x^2$, we need to estimate \[
\lw\|\nu\lambda  \int_0^{\tau } e^{\nu(\tau -s)B}\lw(b(y)\pt_x^2w(s)\rw)ds\rw\|_{\L^1_\rho}+\lw\|\nu\lambda  \int_0^{\tau } e^{\nu(\tau -s)B}a(y)\pt_y w(s)ds\rw\|_{\L^1_\rho}.\\
\]
Writing the above in Fourier and using the Green kernel decomposition \eqref{Stokes-green}, we get
\[\begin{cases}
\lw(\nu e^{\nu(\tau -s)B}\lw(b(y)\pt_x^2 w
\rw)\rw)_\al=&-\al^2 \nu \int_0^\infty H_\al(\tau -s,y,z)b(y)w_\al(s,y)dy\\
&-\al^2\nu \int_0^\infty R_\al(\tau -s,y,z)b(y)w_\al(s,y)dy,\\
\lw(\nu e^{\nu(\tau -s)B}\lw(a(y)\pt_y w\rw)\rw)_\al=&\nu\int_0^\infty H_\al(\tau -s,y,z)a(y)\pt_y w_\al(s,y)dy\\
&+\nu\int_0^\infty R_\al(\tau -s,y,z)a(y)\pt_y w_\al(s,y)dy.\end{cases}
\]

~\\
\textbf{Treating $\al^2 \nu \int_0^\infty H_\al(\tau -s,y,z)b(y)w_\al(s,y)dy$.}~
In view of $H_\al(\tau -s,y,z)$, we need to bound
\[
\lw\|\al^2\nu \int_0^{\tau } e^{-\al^2\nu(\tau -s)}\int_0^\infty \frac{1}{\sqrt{\nu(\tau -s)}}e^{-\frac{|y-z|^2}{4\nu(\tau -s)}}b(y)w_\al(s,y)dyds\rw\|_{\L^1_\rho}
\]
Here we recall that the $L^1_\rho$ norm is taken in $z$ near the boundary, when $0\le \Re z\le \delta_0+\rho$.
To gain analyticity near the boundary, we use 
\begin{equation}\label{exp-abc}
\bega 
e^{\eps_0(\delta_0+\mu-\Re z)|\al|}&\le e^{\eps_0(\delta_0+\mu-\Re y)_+|\al|} \cdot e^{\eps_0|\al||y-z|}\\
&\le  e^{\eps_0(\delta_0+\mu-\Re y)_+|\al|} e^{\eps_0\al^2 \nu(\tau -s)}\cdot e^{\eps_0\frac{|y-z|^2}{4\nu (\tau -s)}},
\enda 
\end{equation}
where the last two factors can be treated using $e^{-\al^2\nu(\tau -s)}e^{-\frac{|y-z|^2}{4\nu(\tau -s)}}$ in the heat kernel. Using this and the fact that the heat kernel is integrable in $z$, we obtain
\beq \label{ineq-1}
\bega
&\lw\|\al^2\nu \int_0^{\tau  }e^{-\al^2\nu(\tau -s)}\int_0^\infty \frac{1}{\sqrt{\nu(\tau -s)}}e^{-\frac{|y-z|^2}{4\nu(\tau -s)}}b(y)w_\al(s,y)dyds\rw\|_{L^1_\rho}\\
&\lesssim \al^2\nu \int_0^{\tau }e^{-\al^2\nu(\tau -s)}\int_0^\infty e^{\eps_0(\delta_0+\rho-\Re y)_+|\al|}b(y)|w_\al(s,y)|dyds.\\
\enda 
\eeq
Now since $b(y)=y\frac{2+\lambda y}{(1+\lambda y)^2}\le 2y$, the above can be bounded by 
\[\bega
&\al^2\nu \int_0^{\tau }e^{-\al^2\nu(\tau -s)} ds\sup_{0\le s\le \tau }\lw(\|yw_\al(s)\|_{\L^1_\rho}+\|yw_\al(s)\|_{L^1(y\ge \delta_0+\rho)}\rw)\\
&\lesssim \sup_{0\le s\le \tau }\lw(\|w_\al\|_{\L^1_\rho}+\|y^2w_\al\|_{L^2(y\ge \delta_0+\rho)}\rw).
\enda 
\]

~\\
\textbf{Treating $\al^2 \nu \int_0^\infty R_\al(\tau -s,y,z)b(y)w_\al(s,y)dy$.}~Using the bounds of the kernel $R_\al$ in Proposition \ref{Stokes-green}, we have 
\[\bega 
&e^{\eps_0(\delta_0+\rho-z)|\al|}\al^2 \nu \int_0^\infty R_\al(\tau -s,y,z)b(y)w_\al(s,y)dy\\
&\lesssim \al^2\nu \cdot e^{\eps_0(\delta_0+\rho-z)|\al|} \int_0^\infty \mu_fe^{-\theta_0\mu_f(y+z)}y|w_\al(s,y)|dy\\
&\lesssim \al^2\nu  \int_0^\infty \mu_fe^{-\frac{1}{2}\theta_0\mu_f(y+z)}e^{\eps_0(\delta_0+\rho-y)_+|\al|}y|w_\al(s,y)|dy\\
&=|\al| \nu \cdot(\mu_fe^{-\frac{\theta_0}{2}\mu_f z}) \int_0^\infty (|\al|ye^{-\frac{\theta_0}{4}\mu_f y})\cdot e^{-\frac{\theta_0}{4}\mu_fy}\cdot e^{\eps_0(\delta_0+\rho-y)_+|\al|}|w_\al(s,y)|dy\\
&\lesssim |\al|\nu \cdot \mu_fe^{-\frac{\theta_0}{2}\mu_fz}\int_0^\infty e^{-\frac{\theta_0}{4}\mu_fy}\cdot e^{\eps_0(\delta_0+\rho-y)_+|\al|}|w_\al(s,y)|dy\\
&=(|\al|\nu)(\mu_fe^{-\frac{\theta_0}{2}\mu_f z})\lw(\int_0^{\delta_0+\rho}+\int_{\delta_0+\rho}^\infty\rw)e^{-\frac{\theta_0}{4}\mu_fy}\cdot e^{\eps_0(\delta_0+\rho-y)_+|\al|}|w_\al(s,y)|dy\\
&\lesssim |\al|\nu\lw( \mu_fe^{-\frac{\theta_0}{2}\mu_fz}\rw)\|w_\al\|_{\L^1_\rho}+|\al|\nu \lw(\mu_fe^{-\theta_0\mu_f z}\rw)e^{-\frac{\theta_0\delta_0}{4}(|\al|+\nu^{-1/2})}\|yw_\al\|_{L^2(y\ge \delta_0+\rho)}.
\enda 
\]
Hence 
\[
\lw\|\al^2 \nu \int_0^\infty R_\al(\tau -s,y,z)b(y)w_\al(s,y)dy\rw\|_{\L^1_\rho}\lesssim  \nu \|\al w_\al\|_{\L^1_\rho}+\nu^2\|yw_\al\|_{L^2(y\ge \delta_0+\rho)}.
\]

~\\
\textbf{Treating $\nu \int_0^\infty H_\al(\tau -s,y,z)a(y)\pt_y w_\al(s,y)dy$.}~Integrating by parts, we have 
\[\bega 
&\quad \nu \int_0^\infty H_\al(\tau -s,y,z)a(y)\pt_y w_\al(s,y)dy\\
&=-\nu\int_0^\infty \pt_y (H_\al(\tau -s,y,z)a(y))w_\al(s,y)dy-\nu H_\al(\tau -s,0,z)a(0)w_\al(s,0).
\enda 
\]
Using the fact that $|\pt_y H_\al|\lesssim \frac{1}{{\nu(\tau -s)}}e^{-\theta_0\al^2\nu(\tau -s)}e^{-\theta_0\frac{|y-z|^2}{4\nu(t-s)}}$ and $|a'(y)|\lesssim 1$, we have 
\[\bega 
&\lw\|\nu \int_0^\infty \pt_y (H_\al(\tau -s,y,z)a(y))w_\al(s,y)dy\rw\|_{\L^1_\rho}\\
&\lesssim \lw(\frac{\sqrt{\nu}}{\sqrt{\tau -s}}+1\rw)\lw(\|w_\al(s)\|_{\L^1_\rho}+\|yw_\al(s)\|_{L^2(y\ge \delta_0+\rho)}\rw).
\enda
\]
For the boundary term $\nu H_\al(\tau -s,0,z)a(0)w_\al(s,0)$, we have 
\[
\lw\|\nu H_\al(\tau -s,0,z)a(0)w_\al(s,0)\rw\|_{\L^1_\rho}\lesssim \nu |w_\al(s,0)|e^{\eps_0|\al|(\delta_0+\rho)}.
\]
Hence 
\[\bega 
&\lw\|
\nu\int_0^\infty H_\al(\tau -s,y,z)a(y)\pt_yw_\al(y,s)dy
\rw\|_{\L^1_\rho}\\
&\lesssim \lw(\sqrt{\nu}(\tau -s)^{-1/2}+1\rw)\lw(\|w_\al(s)\|_{\L^1_\rho}+\|yw_\al(s)\|_{L^2(y\ge \delta_0+\rho)}\rw)+\nu |\w_\al(s,0)|e^{\eps_0|\al|(\delta_0+\rho)}.
\enda 
\]
Integrating both sides in time $s\in [0,\tau]$, we obtain 
\[
\bega 
&\lw\|\nu\int_0^\infty H_\al(\tau -s,y,z)a(y)\pt_yw_\al(y,s)dy\rw\|_{\L^1_\rho}\\
&\lesssim \lw(\sqrt{\nu \tau}+\tau \rw)\sup_{0\le s\le \tau}\lw(\|w_\al(s)\|_{\L^1_\rho}+\|y^2w_\al(s)\|_{L^2(y\ge \delta_0+\rho)}\rw)+\int_0^\tau \nu |\w_\al(s,0)|e^{\eps_0|\al|(\delta_0+\rho)} ds.
\enda
\]

~\\
\textbf{Treating $\nu\int_0^\infty R_\al(\tau -s,y,z)a(y)\pt_y w_\al(s,y)dy$.}~Integrating by parts, we get 
\[\bega 
&\nu\int_0^\infty R_\al(\tau -s,y,z)a(y)\pt_y w_\al(s,y)dy\\
&=-\nu\int_0^\infty \pt_y (R_\al(\tau -s,y,z)a(y))w_\al(s,y)dy -\nu R_\al(\tau -s,0,z)a(0)w_\al(s,0).
\enda 
\]
Since $|\pt_yR_\al|\lesssim \mu_f^{2}e^{-\theta_0\mu_f(y+z)}$, we have 
\[\bega
\lw\|\nu\int_0^\infty \pt_y (R_\al(\tau -s,y,z)a(y))w_\al(s,y)dy\rw\|_{\L^1_\rho}&\lesssim \nu \mu_f \| w_\al\|_{\L^1_\rho}+\nu^2 \|yw_\al\|_{L^2(y\ge \delta_0+\rho)}\\
&\lesssim \sqrt\nu \|w_\al\|_{\L^1_\rho}+\nu \|\al w_\al\|_{\L^1_\rho}+\nu^2 \|yw_\al\|_{L^2(y\ge \delta_0+\rho)}.
\enda 
\]
At the same time, we have 
\[
\lw\|\nu R_\al(\tau -s,0,z)a(0)w_\al(s,0)\rw\|_{\L^1_\rho}\lesssim \nu| w_\al(s,0)|e^{\eps_0(\delta_0+\rho)|\al|}
.
\]
The proof is complete.\end{proof}
In the next propostion, we estimate the boundary term appearing in \eqref{duh}.
\begin{proposition}\label{h-est}
There holds 
\[
\lw\|\int_0^\tau \Gamma(\nu(\tau-s))h(s)ds\rw\|_{\mathcal W_\rho^{k,1}}\lesssim \nu \lambda \int_0^\tau \lw\|w(s)|_{\{z=0\}}\rw\|_{\mathcal H_\rho^{k}}ds.
\]
\end{proposition}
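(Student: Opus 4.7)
The plan is to split the proof into two pieces. First, apply the trace semigroup estimate \eqref{semigroup} from the half-space Stokes theorem to pull the $\mathcal W^{k,1}_\rho$ norm inside the time integral, reducing the problem to the pointwise-in-time Fourier estimate
$$
\|h(s)\|_{\mathcal H^{k}_\rho} \lesssim \nu\lambda \, \|w(s)|_{\{z=0\}}\|_{\mathcal H^{k}_\rho}.
$$
Since each coefficient $h_\alpha(s)$ in \eqref{h-al} is a complex number independent of $y$, and the $\mathcal H^{k}_\rho$ norm is a weighted $\ell^1$ sum of Fourier coefficients with weight $|\alpha|^k e^{\varepsilon_0(\delta_0+\rho)|\alpha|}$, this in turn reduces to the pointwise bound
$$
|h_\alpha(s)| \;\lesssim\; \nu\lambda \, |w_\alpha(s,0)|
$$
uniformly in $\alpha\in\lambda\mathbb Z$, from which the desired estimate follows immediately by summation.

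To prove this Fourier-mode estimate, I would decompose $h_\alpha$ according to \eqref{h-al} into the explicit piece
$-\nu\lambda \, w_\alpha(0)\int_0^\infty L_\alpha(e^{-|\alpha|y})\,dy$
and the corrector piece $-\nu\lambda \int_0^\infty L_\alpha\widetilde{w}^\star_\alpha \, dy$. For the explicit piece, a direct computation using $L_\alpha = a(y)\partial_y - \alpha^2 b(y)$ gives
$$
|L_\alpha(e^{-|\alpha|y})| \;\le\; |\alpha|\,a(y)\,e^{-|\alpha|y} + \alpha^{2}\,b(y)\,e^{-|\alpha|y},
$$
and using the uniform bounds $0\le a(y)\le 1$ and $0\le b(y)\le 2y$ (valid for all $\lambda\in(0,1]$ and $y\ge 0$) together with $\int_0^\infty \alpha^{2} y\, e^{-|\alpha|y}\,dy = 1$ yields $\int_0^\infty |L_\alpha(e^{-|\alpha|y})|\,dy \le C$ uniformly in $\alpha,\lambda$. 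So the explicit piece is at most $C\nu\lambda\,|w_\alpha(0)|$.

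For the corrector piece, I would invert the Dirichlet problem \eqref{w-star-eq} using the explicit Green's function of $\partial_y^2-\alpha^2$ on $[0,\infty)$,
$$
G_\alpha(y,z) \;=\; -\frac{1}{2|\alpha|}\bigl(e^{-|\alpha||y-z|} - e^{-|\alpha|(y+z)}\bigr),
$$
rewriting \eqref{w-star-eq} as the fixed-point identity
$$
\widetilde{w}^\star_\alpha \;=\; -\lambda(\partial_y^2-\alpha^2)^{-1}_{D}\Bigl[L_\alpha\bigl(w_\alpha(0)e^{-|\alpha|y}\bigr) + L_\alpha \widetilde{w}^\star_\alpha\Bigr].
$$
Using that $G_\alpha$ gains one power of $|\alpha|^{-1}$ and that $L_\alpha$ costs at most one derivative (i.e.\ one power of $|\alpha|$), the operator $\lambda(\partial_y^2-\alpha^2)^{-1}_{D} L_\alpha$ is $O(\lambda)$ uniformly in $\alpha$ in a suitable weighted $L^\infty$ (or exponentially-weighted $L^1$) space on $[0,\infty)$. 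Taking $\lambda$ sufficiently small, a Neumann series then converges and yields $\|\widetilde{w}^\star_\alpha\|_{L^1\cap L^\infty} \lesssim \lambda |w_\alpha(0)|$, which in turn produces $\int_0^\infty |L_\alpha \widetilde{w}^\star_\alpha|\,dy \lesssim \lambda |w_\alpha(0)|$. The contribution of the corrector piece to $h_\alpha$ is therefore $O(\nu\lambda^{2}|w_\alpha(0)|)$, absorbable into the bound.

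The main technical step is the elliptic bootstrap for $\widetilde{w}^\star_\alpha$, where one must track uniformity in $\alpha$ carefully; the key observation is that although $L_\alpha$ contains the high-frequency term $-\alpha^{2}b(y)$, the factor $b(y)=y\frac{2+\lambda y}{(1+\lambda y)^{2}}$ vanishes at $y=0$ and decays at infinity precisely so that $(\partial_y^2-\alpha^2)^{-1}_D L_\alpha$ remains bounded with operator norm $O(1)$ independent of $\alpha$, and the smallness of $\lambda$ then closes the Neumann iteration. Combining the two pieces gives $|h_\alpha|\le C\nu\lambda|w_\alpha(s,0)|$, and multiplying by $|\alpha|^{k}e^{\varepsilon_0(\delta_0+\rho)|\alpha|}$, summing in $\alpha$, and integrating in $s$ completes the proof.
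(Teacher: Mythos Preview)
Your overall strategy matches the paper's: reduce via the trace semigroup bound \eqref{semigroup} to the Fourier-mode estimate $|h_\alpha(s)|\lesssim\nu\lambda\,|w_\alpha(s,0)|$, then split $h_\alpha$ into the explicit piece and the corrector piece. Your treatment of the explicit piece is correct and essentially identical to the paper's.

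The corrector piece contains a gap. Your assertion that $b(y)=\frac{y(2+\lambda y)}{(1+\lambda y)^2}$ ``decays at infinity'' is false: one computes $b(y)=\lambda^{-1}\bigl(1-(1+\lambda y)^{-2}\bigr)$, which increases monotonically to $1/\lambda$. Hence $\lambda\alpha^{2}b(y)\to\alpha^{2}$ as $y\to\infty$, so $\lambda L_\alpha$ is an $O(1)$ (not $o(1)$) perturbation of $\partial_y^2-\alpha^2$ globally; the operator $\lambda(\partial_y^2-\alpha^2)_D^{-1}L_\alpha$ has norm of order one in the natural exponentially-weighted spaces (applying it to $e^{-|\alpha|y}$ already generates secular polynomial prefactors), and your Neumann series does not close merely by taking $\lambda$ small. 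The paper instead controls $\widetilde w^\star_\alpha$ via Lemmas~\ref{lem-ellip1} and~\ref{lem-ellip2} of Section~\ref{ellip-sec}: Lemma~\ref{lem-ellip2} uses the \emph{explicit} polar Green's function of $\partial_r^2+\tfrac1r\partial_r-\tfrac{n^2}{r^2}$ (built from $r^{\pm|n|}$, with $r=1+\lambda y$) to obtain the global bound $\|a(y)\partial_y\widetilde w^\star_\alpha\|_{L^\infty}\lesssim\lambda|w_\alpha(0)|$, and Lemma~\ref{lem-ellip1} then runs a perturbative argument in the analytic norm only \emph{near the boundary}, where $b(y)\le 2(\delta_0+\rho)$ is genuinely small, feeding in this far-field $L^\infty$ control. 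In short, the correct elliptic estimate is two-region (exact polar formula far away, small perturbation near $y=0$), not a single global Neumann series.
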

\begin{proof}
By the estimate \eqref{semigroup}, we have 
\[
\lw\|\int_0^{\tau }\Gamma(\nu(\tau -s))h(s)ds
\rw\|_{\mathcal W^{k,1}_\rho}\lesssim \sum_{\al}|\al|^k\int_0^{\tau }|h_\al(s)|e^{\eps_0(\delta_0+\rho)|\al|}ds.
\]
From the identity \eqref{h-al}, we have $h=h_1+h_2$ where
\[
\begin{cases}
h_1&=-\lambda \nu\int_0^\infty e^{-|\al|y}w_\al(0,s)L_\al(e^{-|\al|y})dy, \\
h_2&=-\lambda \nu \int_0^\infty  e^{-|\al|y}L_\al \wtd w_\al^\star(y,s)dy.
%g_3&=-\pt_y(\triangle+\lambda L)^{-1}(a(y)\wtd u\cdot\nabla w)_\al|_{y=0}.
\end{cases}
\]

~\\
\textbf{Treating $h_1$}.~Since $L_\al=a(y)\pt_y-\al^2 b(y)$, by a direct calculation, we have 
\[\bega 
h_1&=\lambda \nu w_\al(0,s)\lw(\int _0^\infty |\al|e^{-2|\al|y}a(y)dy+\al^2\int_0^\infty e^{-2|\al|y}b(y)dy\rw)\\
&\lesssim \lambda \nu|w_\al(0,s)|.
\enda 
\]
Here we use the fact that $a(y)\le 1, b(y)\le 2y$ and $\int |\al|e^{-|\al|y}dy\lesssim 1$.~

~\\
\textbf{Treating $h_2$}.~We have $h_2=h_{2,1}+h_{2,2}$ where 
\[
\begin{cases}
h_{2,1}&=-\lambda \nu \int_0^\infty e^{-|\al|y}a(y)\pt_y\wtd w_\al^\star(y,s)dy,\\
h_{2,2}&=\lambda \nu \al^2 \int_0^\infty e^{-|\al |y} b(y)\wtd w_\al^\star(s,y)dy.
\end{cases}
\]
We have 
\[\bega 
|h_{2,1}|e^{\eps_0(\delta_0+\rho)|\al|}&\lesssim \nu \lambda e^{\eps_0(\delta_0+\rho)|\al|}\int_0^\infty e^{-|\al|y}a(y)\pt_y\wtd w_\al^\star(y)dy\\
&\lesssim \nu \lambda \int_0^{\delta_0+\rho} e^{\eps_0(\delta_0+\rho-y)|\al|}|\pt_yw_\al^\star(y)|dy\\
&\quad+\nu\lambda  e^{\eps_0(\delta_0+\rho)|\al|} \int_{\delta_0+\rho}^\infty e^{-\frac{1}{2}|\al|(\delta_0+\rho)}a(y)|\pt_y\wtd w_\al^\star(y)|e^{-\frac{1}{2}|\al|y}dy\\
&\lesssim \nu\lambda \|\pt_y\wtd w_\al^\star(s)\|_{\L^1_\rho}+\nu \lambda e^{-(1/2-\eps_0)|\al|(\delta_0+\rho)}\|a(y)\pt_y\wtd w_\al^\star(s)\|_{L^\infty(y\ge\delta_0+\rho)}\frac{1}{|\al|}.
\\
\enda \]
Using the fact that $\lambda \lesssim |\al|$, we obtain 
\beq\label{est2}
|h_{2,1}|e^{\eps_0(\delta_0+\rho)|\al|}\lesssim \nu \lambda \|\pt_y\wtd w^\star_\al\|_{\L^1_\rho}+\nu e^{-(1/2-\eps_0)|\al|\delta_0}\|a(y)\pt_y\wtd w_\al^\star(s)\|_{L^\infty}.
\eeq
Now we recall from \eqref{w-star-eq} that $\wtd w_\al^\star$ solves the elliptic problem \[\bega
(\pt_y^2-\al^2+\lambda L_\al)\wtd w_\al^\star &=-\lambda w_\al(0)L_\al(e^{-|\al|y})=-\lambda w_\al(0)e^{-|\al|y}\lw(-|\al|a(y)-\al^2b(y)\rw)\\
&=\lambda w_\al(0)|\al|a(y)e^{-|\al|y}+\al^2\lambda w_\al(0)b(y)e^{-|\al|y}\\
&=\lambda w_\al(0)|\al|\lw(a(y)e^{-|\al|y}+|\al|e^{-|\al|y}b(y)\rw)
\enda 
\]
with the boundary condition $\wtd w_\al^\star|_{y=0}=0$.
Hence by using Lemma \ref{lem-ellip1}, we get
\[\bega
\|\pt_y\wtd w_\al^\star\|_{\L^1_\rho}&\lesssim \|\pt_y\wtd w_\al^\star\|_{\L^\infty_\rho}\\
&\lesssim \lambda |w_\al(0)||\al|\cdot\lw(\lw\|a(y)e^{-|\al|y}\rw\|_{\L^1_\rho}+\lw\|\al e^{-|\al|y}b(y)\rw\|_{\L^1_\rho}\rw)\\
&\quad + \lambda |w_\al(0)||\al|\lw(\|ya(y)e^{-|\al|y}\|_{L^2(y\ge \delta_0+\rho)}+\|\al e^{-|\al|y}b(y)\|_{L^2(y\ge \delta_0+\rho)}
\rw)\\
&\lesssim e^{\eps_0(\delta_0+\rho)|\al|}\cdot  \lambda |w_\al(0,s)|.
\enda
\]
Similarly, for the second term appearing on the right hand side of \eqref{est2}, we use Lemma \ref{lem-ellip2} to get 
\[\bega 
\|a(y)\pt_y\wtd w_\al^\star\|_{L^\infty}&\lesssim \int_0^\infty \lambda |w_\al(0)||\al| \cdot\lw| a(y)e^{-|\al|y}+|\al|e^{-|\al|y}b(y)\rw|dy\\
&\lesssim \lambda  |w_\al(0,s)|.
\enda 
\]
The bound for $h_{2,2}$ is nearly the same, as we note that $b(y)=\frac{y(2+\lambda y)}{(1+\lambda y)^2}\lesssim y a(y)$. We skip the details for $h_{2,2}$, and conclude that 
\[
|h_{2,2}|e^{\eps_0(\delta_0+\rho)|\al|}\lesssim \nu  |w_\al(0,s)|.
\]
Hence we get 
\[
|h_2|e^{\eps_0(\delta_0+\rho)|\al|}\lesssim \nu |w_\al(0,s)| e^{\eps_0(\delta_0+\rho)|\al|},
\]
giving the proposition. 
\end{proof}

Combining the previous two propositions, we have obtained the following. 

\begin{proposition}\label{coupled}Let $w$ be the solution to the Stokes problem \eqref{st-ext1} with the initial data $w_0$. Then for $k\ge 0$ and $\rho>0$, there hold the coupled semigroup estimates
\[\bega
\sup_{0\le s\le \tau}\|w(s)\|_{\mathcal W_\rho^{k,1}}&\lesssim \|w_0\|_{\mathcal W_\rho^{k,1}}+\|yD^{k+1}_{x,y}w_0\|_{L^2(y\ge \delta_0/2)} 
\\&\quad +\lambda \sup_{0\le s\le \tau} \Big( \|w(s)\|_{\mathcal W^{k,1}_\rho} + \|y^2D^{k+1}_{x,y}w(s)\|_{L^2(y\ge \delta_0+\rho)} \Big)
\\
&\quad+\lambda \nu \int_0^\tau \|w(s)\|_{\mathcal W_\rho^{k+1,1}}+\lambda \nu \int_0^\tau \|w(s)_{z=0}\|_{\mathcal H^k_\rho}ds\\
&\quad +\int_0^\tau\lw(\|f(s)\|_{\mathcal W_\rho^{k,1}}+\|yD^{k+1}_{x,y} f(s)\|_{L^2(y\ge \delta_0+\rho)}\rw)ds+\int_0^\tau\|g(s)\|_{\mathcal H_\rho^{k}}ds .
\enda 
\]
\end{proposition}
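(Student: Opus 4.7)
The plan is to obtain the stated estimate by inserting the Duhamel representation \eqref{duh} into the $\cW_\rho^{k,1}$ norm and bounding each of the five resulting terms by a result already established in this section. Concretely, write
\[
 w(\tau) = e^{\nu\tau B}w_0 + \int_0^\tau e^{\nu(\tau-s)B}(\nu\lambda Lw)(s)\,ds + \int_0^\tau e^{\nu(\tau-s)B}f(s)\,ds + \int_0^\tau \Gamma(\nu(\tau-s))g(s)\,ds + \int_0^\tau \Gamma(\nu(\tau-s))h(s)\,ds,
\]
and apply the triangle inequality. For the $w_0$ contribution and the $f$ contribution, use the first two estimates in \eqref{semigroup}, producing respectively $\|w_0\|_{\cW_\rho^{k,1}}+\|yD_{x,y}^{k+1}w_0\|_{L^2(y\ge\delta_0+\rho)}$ and $\int_0^\tau(\|f(s)\|_{\cW_\rho^{k,1}}+\|yD_{x,y}^{k+1}f(s)\|_{L^2(y\ge\delta_0+\rho)})\,ds$. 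For the $g$ contribution, the third estimate of \eqref{semigroup} yields $\int_0^\tau\|g(s)\|_{\cH_\rho^k}\,ds$.

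The remaining two terms are precisely the ones handled earlier: Proposition \ref{nuL} bounds the $\nu\lambda L w$ Duhamel term by
\[
 C\lambda\sup_{0\le s\le\tau}\bigl(\|w(s)\|_{\cW_\rho^{k,1}}+\|y^2 D_{x,y}^{k+1}w(s)\|_{L^2(y\ge\delta_0+\rho)}\bigr) + C\lambda\nu\int_0^\tau\|w(s)\|_{\cW_\rho^{k+1,1}}\,ds + C\lambda\nu\int_0^\tau\|w(s)|_{z=0}\|_{\cH_\rho^k}\,ds,
\]
while Proposition \ref{h-est} bounds the $h$ Duhamel term by $C\lambda\nu\int_0^\tau\|w(s)|_{z=0}\|_{\cH_\rho^k}\,ds$, which is absorbed into the last term above. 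Summing the five contributions and then taking the supremum over $\tau'\in[0,\tau]$ on the left-hand side yields the claimed inequality. (The initial data and $f, g$ source terms are already time-independent under this supremum, and the $\lambda$ and $\lambda\nu$ terms on the right only grow in $\tau$.)

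No step is a genuine obstacle here, since each of the five summands has been handled in a dedicated proposition above; the role of this statement is essentially bookkeeping. The only mildly delicate point is making sure the bookkeeping is self-consistent: namely, that the $L^2(y\ge\delta_0+\rho)$ tail norms and the $\cH_\rho^k$ boundary trace norms appearing inside the Duhamel integrals match the norms that are ultimately controlled later when this proposition is coupled to the nonlinear iteration. In particular, one should note that the $\|y^2 D_{x,y}^{k+1}w\|_{L^2(y\ge\delta_0+\rho)}$ term produced by Proposition \ref{nuL} carries an extra factor of $y$ compared to the analogous term coming from the half-space semigroup estimate in \eqref{semigroup}; this is consistent with the $r^{-2}$ decay of the vorticity mentioned in Section \ref{sec-difficulty}, and will be propagated later when closing the iteration.
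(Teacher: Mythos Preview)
Your proposal is correct and follows exactly the paper's approach: the paper's proof is a single sentence stating that the result follows directly from the Duhamel representation \eqref{duh} combined with the semigroup estimates \eqref{semigroup}, Proposition \ref{nuL}, and Proposition \ref{h-est}. Your write-up simply makes this bookkeeping explicit.
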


\begin{proof} Recall the Duhamel representation \eqref{duh}. The proof thus follows directly by combining the semigroup estimates \eqref{semigroup}, the estimates for the perturbation term in Proposition \ref{nuL}, and the boundary estimates in Proposition \ref{h-est}.
\end{proof}

Finally, we give bounds on $\|w(s)_{z=0}\|_{\mathcal H^k_\rho}$ appearing on the right of the previous estimates.

\begin{proposition}\label{at-bdr}
Let $w$ be the solution to the Stokes problem \eqref{st-ext1}. There holds
\beq\label{l1-time}\bega
\lw\|w(\tau)|_{z=0}\rw\|_{\mathcal H_\rho^{k}}&\lesssim \|y\pt_xw_0\|_{L^2(y\ge\delta_0/2)}+(\nu \tau)^{-1/2}\|w_0\|_{\mathcal W^{k+1,1}_\rho}+\nu \lambda  \int_0^\tau \lw\|w(s)|_{z=0}\rw\|_{\mathcal H^{k+1}_\rho}ds\\
&\quad+\lambda \sup_{0\le s\le\tau}\lw(\|w(s)\|_{\mathcal W_\rho^{k,1}}+\|y^2D^{k+1}_{x,y}w(s)\|_{L^2(y\ge \delta_0+\rho)}\rw)+\nu \int_0^\tau \|\pt_xw(s)\|_{\mathcal W^{k+1,1}_\rho}ds\\
&\quad+\int_0^\tau \|yD_x^1f(s)\|_{L^2(y\ge \delta_0+\rho)}+\nu^{-1/2}\int_0^\tau (\tau-s)^{-1/2}\|f(s)\|_{\mathcal W_\rho^{k,1}}ds
\\
&\quad+\int_0^\tau \|\pt_x f (s)\|_{\mathcal W_\rho^{k,1}}ds+\int_0^\tau \|g(s)\|_{\mathcal H_\rho^{k}}ds.\enda
\eeq
\end{proposition}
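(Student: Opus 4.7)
The plan is to apply the Duhamel representation \eqref{duh} for the Stokes solution at $z=0$ and to bound the boundary trace in the $\mathcal H^k_\rho$ norm term by term, using the explicit Green-function decomposition $G_\alpha = H_\alpha + R_\alpha$ from \eqref{Stokes-green} together with the trace-type estimates for the exterior perturbation $\nu\lambda Lw$ and the correction $h$ that were already obtained in Propositions \ref{nuL} and \ref{h-est}. Five terms will be treated in turn, corresponding to $e^{\nu\tau B}w_0$, $\int_0^\tau e^{\nu(\tau-s)B}(\nu\lambda Lw)ds$, $\int_0^\tau e^{\nu(\tau-s)B}f\,ds$, $\int_0^\tau\Gamma(\nu(\tau-s))g\,ds$ and $\int_0^\tau\Gamma(\nu(\tau-s))h\,ds$, each evaluated at $z=0$.

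For the initial-datum contribution I would Fourier transform in $x$ and evaluate $G_\alpha(\tau,y,0)$: the heat part is bounded by $(\nu\tau)^{-1/2}e^{-y^2/(4\nu\tau)}e^{-\alpha^2\nu\tau}$ and the residual by $\mu_f e^{-\theta_0\mu_f y}+(\nu\tau)^{-1/2}e^{-\theta_0 y^2/\nu\tau}e^{-\alpha^2\nu\tau/8}$ with $\mu_f=|\alpha|+\nu^{-1/2}$. Split the $y$-integration at $y=\delta_0+\rho$. On $\{y\le\delta_0+\rho\}$ the analyticity weight $e^{\epsilon_0 y|\alpha|}$ is absorbed into $e^{-\alpha^2\nu\tau}$ by completion of squares and into $e^{-\theta_0\mu_f y}$ by choosing $\epsilon_0$ small, which produces $(\nu\tau)^{-1/2}\|w_0\|_{\mathcal W^{k+1,1}_\rho}$; the extra derivative (passing from $\mathcal W^{k,1}$ to $\mathcal W^{k+1,1}$) compensates for the $\mu_f$-factor in the residual kernel. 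On $\{y\ge\delta_0+\rho\}$ I apply Cauchy--Schwarz in $y$ against $e^{-y^2/(4\nu\tau)}$, whose $L^2$ norm on $\{y\ge\delta_0/2\}$ absorbs the $(\nu\tau)^{-1/2}$ prefactor and delivers $\|y\pt_xw_0\|_{L^2(y\ge\delta_0/2)}$ after producing one $|\alpha|$ from $e^{-\alpha^2\nu\tau/2}$ and using the $y$-weight in the norm.

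For the perturbation term, I reuse the scheme of Proposition \ref{nuL}, now with $H_\alpha(\tau-s,y,0)$ and $R_\alpha(\tau-s,y,0)$ evaluated on the boundary; the pieces carrying $\pt_x^2w$ are handled directly by the same weight-shift arguments, while in the piece carrying $\pt_yw$ I integrate by parts in $y$. The boundary contribution generated by the integration by parts produces the self-coupling $\nu\lambda\int_0^\tau\|w(s)|_{z=0}\|_{\mathcal H^{k+1}_\rho}ds$ (one derivative higher, intended to close through a hierarchy/iteration in $k$ together with Proposition \ref{coupled}), and the interior remainder contributes the $\lambda$-supremum term and $\nu\int_0^\tau\|\pt_xw\|_{\mathcal W^{k+1,1}_\rho}ds$. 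The interior forcing is estimated identically, yielding $\nu^{-1/2}\int_0^\tau(\tau-s)^{-1/2}\|f(s)\|_{\mathcal W^{k,1}_\rho}ds$ from $H_\alpha$ at $z=0$, the far-field piece $\int_0^\tau\|yD_x^1 f\|_{L^2(y\ge\delta_0+\rho)}ds$ from the Cauchy--Schwarz step above, and the extra $\pt_xf$-derivative term from the $\mu_f$-weight in $R_\alpha$.

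The two boundary contributions are immediate: $\int_0^\tau\Gamma(\nu(\tau-s))g\,ds|_{z=0}$ produces $\int_0^\tau\|g(s)\|_{\mathcal H^k_\rho}ds$ by the trace-semigroup bound in \eqref{semigroup}, and $\int_0^\tau\Gamma(\nu(\tau-s))h\,ds|_{z=0}$ is absorbed via Proposition \ref{h-est}, since its output is already of the form $\nu\lambda|w_\alpha(0,s)|$ modulo harmless remainders and thus recycles into the $\mathcal H^{k+1}_\rho$ coupling term. The main obstacle I anticipate is the careful bookkeeping of analyticity weights across the two regions $\{y\le\delta_0+\rho\}$ and $\{y\ge\delta_0+\rho\}$, and in particular the treatment of the $\mu_f=|\alpha|+\nu^{-1/2}$ weight in the residual kernel: it is precisely this weight that forces the trace estimate to require one higher $x$-derivative on both $w_0$ (the $\mathcal W^{k+1,1}_\rho$ norm) and on the boundary value of $w$ itself (the $\mathcal H^{k+1}_\rho$ norm on the right-hand side), a coupling that cannot be closed within this proposition alone and must be dealt with through the combined hierarchy with Proposition \ref{coupled} in the subsequent analysis.
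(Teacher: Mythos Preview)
Your proposal is correct and follows essentially the same approach as the paper: evaluate the Duhamel representation \eqref{duh} at $z=0$, split each Green-function integral at $y=\delta_0+\rho$, absorb the analyticity weight via the exponential factors in $G_\alpha=H_\alpha+R_\alpha$, and recycle the estimates of Propositions \ref{nuL} and \ref{h-est} with the observation that the missing $L^1_z$-integration at $z=0$ costs an extra factor of $\mu_f=|\alpha|+\nu^{-1/2}$ from $R_\alpha$, which is precisely what produces the one-derivative losses you flagged. The paper's proof is organized around the same four pieces $P_1,\dots,P_4$ and the same weight-shifting device (written there as the elementary inequality $\alpha^2\nu(\tau-s)+y^2/(\nu(\tau-s))\ge 4|\alpha|y$, your ``completion of squares'').
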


\begin{proof}
We shall bound each term in \eqref{duh}, evaluating at $z=0$. First, we have 
\[\bega
w_\al(\tau ,0)=&\int_0^\infty G_\al(\tau ,y,0)w_{0,\al}(y)dy+\int_0^\tau G_\al(\tau-s,y,0)(\nu\lambda L_\al w_\al)(s,y)dy\\
&+\int_0^{\tau }\int_0^\infty G_\al(\tau -s,y,0)f_\al(s,y)dyds+\int_0^{\tau }\Gamma_\al(\nu(\tau -s))(g_\al+h_\al)(s)ds\\
=&P_1(\tau)+P_2(\tau)+P_3(\tau)+P_4(\tau).
\enda
\]
Hence we get 
\[
|\al|^ke^{\eps_0(\delta_0+\rho)|\al|}w_\al(\tau,0)=\sum_{i=1}^4 P_i(\tau)
\]
where 
\[
\begin{cases}
P_1(\tau)=&|\al|^ke^{\eps_0(\delta_0+\rho)|\al|}\int_0^\infty G_\al(\tau ,y,0)w_{0,\al}(y)dy\\
P_2(\tau)=&|\al|^k e^{\eps_0(\delta_0+\rho)|\al|}\int_0^\tau G_\al(\tau-s,y,0)(\nu\lambda L_\al w_\al)(s,y)dy\\
P_3(\tau)=&|\al|^ke^{\eps_0(\delta_0+\rho)|\al|}\int_0^{\tau }\int_0^\infty G_\al(\tau -s,y,0)f_\al(s,y)dyds\\
P_4(\tau)=&|\al|^ke^{\eps_0(\delta_0+\rho)|\al|}\int_0^{\tau }\Gamma_\al(\nu(\tau -s))(g_\al+h_\al)(s)ds.
\end{cases}
\]
We recall the pointwise Green kernel bound:
\[
G_\al(\tau -s,y,0)\lesssim (\nu(\tau -s))^{-1/2}e^{-\theta_0\frac{y^2}{\nu(\tau -s)}}e^{-\theta_0\al^2\nu(\tau -s)}+\mu_fe^{-\mu_f y}.
\]
Let us first bound the term 
\beq\label{P3}
P_3(\tau)=|\al|^k e^{\eps_0(\delta_0+\rho)|\al|}
\lw| 
\int_0^\tau  \int_0^\infty G_\al(\tau-s,y,0)f_\al(s,y)dyds\rw|
\eeq
We will show that 
\beq\label{P3ineq}
P_3(\tau)\lesssim \int_0^\tau \lw(\|yf_\al(s)\|_{L^2(y\ge \delta_0+\rho)}+ \lw((\nu(\tau-s))^{-1/2}+|\al|+\nu^{-1/2}\rw)|\al|^k \|f_\al(s)\|_{\L^1_\rho}\rw)ds .
\eeq
To show the above inequality, we split the integral in $y$ in \eqref{P3} into $\int_{\delta_0+\rho}^\infty+\int_0^{\delta_0+\rho}$. 
We note that if $y\ge \delta_0+\rho$, then $G_\al$ is exponentially decay in $\al$, which is faster than $e^{-\eps_0(\delta_0+\rho)|\al|}$ for $\eps_0$ small, giving 
\[\bega
&|\al|^k e^{\eps_0(\delta_0+\rho)|\al|}
\lw| 
\int_0^\tau  \int_{\delta_0+\rho}^\infty G_\al(\tau-s,y,0)f_\al(s,y)dyds
\rw|\\
&\lesssim \int_0^\tau \|yf_\al(s)\|_{L^2(y\ge \delta_0+\rho)}ds
\enda 
\]
Now we consider $y\le \delta_0+\rho$. By the Cauchy inequality $\al^2\nu(\tau-s)+\frac{y^2}{\nu(\tau-s)}\ge 4|\al|y$ and the fact that $\theta_0,\eps_0$ is taken to be small, we obtain 
\[\bega
e^{\eps_0(\delta_0+\rho)|\al|}G_\al(\tau-s,0,y)&\lesssim \lw((\nu(\tau-s)^{-1/2}+\mu_f\rw)e^{-\frac{\theta_0}{2}|\al|y}e^{\eps_0(\delta_0+\rho)|\al|}\\
&\lesssim \lw((\nu(\tau-s))^{-1/2}+\mu_f\rw)e^{\eps_0(\delta_0+\rho-y)|\al|}.
\enda \]
Hence we obtain
\[\bega
&|\al|^k e^{\eps_0(\delta_0+\rho)|\al|}\lw|\int_0^\tau \int_0^{\delta_0+\rho}G_\al(\tau-s,y,0)f_\al(s,y)dyds\rw|\\
&\lesssim  \int_0^\tau \lw((\nu(\tau-s))^{-1/2}+|\al|+\nu^{-1/2}\rw)|\al|^k \|f_\al(s)\|_{\L^1_\rho}ds.
\enda
\]
This concludes the proof for the inequality \eqref{P3ineq}.
Next, we bound 
\[
P_2(\tau)=|\al|^k e^{\eps_0(\delta_0+\rho)|\al|}\int_0^\tau G_\al(\tau-s,y,0)(\nu\lambda L_\al w_\al)(s,y)dy.
\]
The proof for the bound of $P_2(\tau)$ is exactly the same as in the semigroup estimate in Proposition \ref{nuL}, except now that we cannot use the $L^1$ norm in $z$ in this case, as $z=0$, giving an extra $\mu_f=|\al|+\nu^{-1/2}$ in the estimate involving the kernel $R_\al(\tau-s,0,z)|_{z=0}$. 
We obtain 
\[\bega
P_2(\tau)&\lesssim \lambda \sup_{0\le s\le\tau}\lw(\|w\|_{\mathcal W_\rho^{k,1}}+\|y^2D^{k+1}_{x,y}w\|_{L^2(y\ge \delta_0+\rho)}\rw)\\
&+\nu \int_0^\tau  \|\pt_xw\|_{\mathcal W^{k+1,1}_\rho}+\nu \lambda \sum_{\al} \int_0^\tau |\al|^{k+1} |w_\al(s,0)|e^{\eps_0(\delta_0+\rho)|\al|}.
\enda
\]
Finally, for the initial data, we obtain 
\[
P_1(\tau)\lesssim \|y\pt_x^kw_0\|_{L^2(y\ge \delta_0/2)}+(\nu \tau)^{-1/2}\|w_0\|_{\mathcal W^{k+1,1}_\rho},
\]
giving the proposition. 
 \end{proof}

\begin{remark} Note that in the above estimates, the boundary value quantity $\|w(\tau)_{z=0}\|_{\mathcal H_\rho^k}$ has  two losses of derivatives compared to the norm $\|w(\tau)\|_{\mathcal W_\rho^{k,1}}$. However, it has only one loss of derivative compared to its norm and we are able to close the Sobolev-analytic estimates by introducing an iterative adjusted $k$-index norms, yielding close estimates on the Stokes semigroup in terms of initial and boundary data $f,g$ given in the problem \eqref{st-ext1}. 
\end{remark}

\begin{proof}[Proof of Theorem \ref{theo-Stokes}] Let $w = e^{\nu t S} w_0$ be the solution to \eqref{st-ext1} with $f=0$ and $g=0$. In view of the previous propositions, we define the following norm 
\beq\label{main-norm}
\bega 
\mathcal A_k(w(\tau),\rho)&=\lw(\|w(\tau )\|_{\mathcal W^{k,1}_\rho}+ \sqrt{\nu \tau}\|w(\tau)|_{z=0}\|_{\mathcal H^{k-1}_\rho}\rw)\\
&\quad+\lw(\|w(\tau )\|_{\mathcal W^{k+1,1}_\rho}+\sqrt{\nu \tau}\|w(\tau)|_{z=0}\|_{\mathcal H_\rho^{k}}\rw)(\rho_0-\rho-\beta \tau )^{\gamma}\\
\enda 
\eeq
and the quantity 
\[\bega
A(\beta)=&\sup_{0<\tau  \beta <\rho_0}\quad \lw\{\sup_{0<\rho<\rho_0-\beta \tau }\lw(\mathcal A_k(w(\tau),\rho) \rw)\rw\}+\sup_{0<\tau \beta<\rho_0}\|y^2D_{x,y}^5w\|_{L^2(y\ge \delta_0/2)}.
\enda 
\]
We claim that 
\begin{equation}\label{bound-Abeta}
A(\beta)\lesssim
\|w_0\|_{\mathcal W^{2,1}_{\rho_0}}+\|y^2D_{x,y}^5w_0\|_{L^2(y\ge \delta_0/4)} 
+e^{C(1+A(\beta))\beta^{-1}} \|y^2D_{x,y}^5w_0\|_{L^2(y\ge \delta_0/4)}
\end{equation}
which would yield the theorem. In fact, in Section \ref{sec7}, using precisely Propositions \ref{coupled} and \ref{at-bdr} above, we shall prove the claim for the nonlinear solution to \eqref{st-ext1} with $f$ and $g$ being the nonlinear terms inherited from the vorticity formulation of the Navier-Stokes problem. We therefore skip to repeat the details here for the linear problem with zero $f$ and $g$. 
\end{proof}

\section{Elliptic estimates}\label{ellip-sec}
In this section, we prove estimates for velocity near the boundary and away from the boundary in terms of vorticity. In particular, we consider the elliptic problem 
\beq\label{ellip-new}
\begin{cases}
&(\triangle+\lambda L)\psi=\lambda^2 w,\\
&\psi|_{y=0}=0.
\end{cases}
\eeq
where $L\psi=a(y)\pt_y\psi+b(y)\pt_x^2\psi$, $a(y)=\frac{1}{1+\lambda y},\quad b(y)=\frac{y(2+\lambda y)}{(1+\lambda y)^2}$.
Our main goal in this section is to show the elliptic estimates in the analytic domain near the boundary (see Proposition \ref{near-bdr-prop} below), in the intermediate region (Proposition \ref{inter2}) and
the region away from the boundary (Proposition \ref{D-k-new}).

\subsection{Elliptic estimates in the analytic region}
We first show the following lemma that gives a $L^\infty$ bound for velocity field:

\begin{lemma} \label{lem-ellip2}There holds 
\[
\|a(y)\al\psi_\al\|_{L^\infty}+\|\pt_y \psi_\al\|_{L^\infty}
\lesssim \int_0^\infty |w_\al(y)|dy 
\lesssim \|w_\al\|_{\L^1_\rho}+\|yw_\al\|_{L^2(y\ge \delta_0+\rho)}.
\]
\end{lemma}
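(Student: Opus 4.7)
The plan is to read off pointwise bounds on $\psi_\alpha$ from an explicit Green function. Although stated in the rescaled geodesic variables, the elliptic problem $(\Delta_{x,y}+\lambda L)\psi = \lambda^2 w$ with $\psi|_{y=0}=0$ is, up to the scaling factor $\lambda^2$ on the right hand side, nothing but the standard Laplace equation $\Delta_{r,\theta}\psi = \omega$ on the exterior disk $\{r\ge 1\}$ written via $r = 1+\lambda y$, $\theta = \lambda x$. Hence at each angular frequency $n\in\ZZ$ (with $\alpha = n\lambda$) one is reduced to the radial ODE
\[
\Big(\pt_r^2 + \tfrac{1}{r}\pt_r - \tfrac{n^2}{r^2}\Big)\psi_n(r) = \omega_n(r), \qquad \psi_n(1) = 0, \qquad \psi_n \text{ bounded as } r\to\infty,
\]
whose Green function, built from the two independent homogeneous solutions $r^{|n|}-r^{-|n|}$ (vanishing at $r=1$) and $r^{-|n|}$ (decaying at infinity), is, for $n\ne 0$,
\[
G_n(r, r') = -\frac{1}{2|n|}\begin{cases} (r^{|n|} - r^{-|n|})\, r'^{-|n|}, & 1\le r \le r', \\ (r'^{|n|} - r'^{-|n|})\, r^{-|n|}, & 1\le r' \le r, \end{cases}
\]
so that $\psi_n(r) = \int_1^\infty G_n(r, r')\,\omega_n(r')\,r'\,dr'$.

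Next I would establish the uniform pointwise bounds
\[
\tfrac{|n|}{r}\,|G_n(r, r')|\, r' \le \tfrac{1}{2}, \qquad |\pt_r G_n(r, r')|\, r' \lesssim 1,
\]
valid for all $r, r'\ge 1$ and $|n|\ge 1$; a direct branchwise computation reduces each kernel to a factor of the form $(r/r')^{|n|-1}\le 1$ (when $r\le r'$) or $(r'/r)^{|n|+1}\le 1$ (when $r>r'$). Combined with the Biot--Savart formulas $u_{r,n} = in\psi_n/r$ and $u_{\theta,n} = -\pt_r\psi_n$, these imply $|u_{r,n}(r)|+|u_{\theta,n}(r)| \lesssim \int_1^\infty |\omega_n(r')|\,dr'$. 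Translating back via $r=1+\lambda y$, using that the Jacobian $\lambda$ from $dr = \lambda dy$ combines with the $\lambda^{-1}$ from $n=\alpha/\lambda$ and the $\lambda^{-1}$ from $\pt_r = \lambda^{-1}\pt_y$ to convert the bounds exactly into the claimed $\|a(y)\alpha \psi_\alpha\|_{L^\infty} + \|\pt_y\psi_\alpha\|_{L^\infty}\lesssim \int_0^\infty|w_\alpha|dy$. The zero mode must be treated separately since the $G_n$ formula degenerates: $a(y)\alpha\psi_\alpha$ vanishes identically when $\alpha=0$, while the ODE $((1+\lambda y)\pt_y\psi_0)' = \lambda^2(1+\lambda y)w_0$ integrated with the no-slip condition $\pt_y\psi_0(0)=0$ gives the analogous pointwise bound.

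For the second inequality I would split the integral at $y=\delta_0+\rho$. On $[0,\delta_0+\rho]$, inserting the (at least one) weight $e^{\eps_0(\delta_0+\rho-\Re y)|\alpha|}$ and restricting to the real segment in the boundary $\pt\Omega_\eta$ with $\eta=0$ gives $\int_0^{\delta_0+\rho}|w_\alpha|dy \le \|w_\alpha\|_{\L^1_\rho}$, exactly as in \eqref{yf1}. On $[\delta_0+\rho, \infty)$, Cauchy--Schwarz against the integrable weight $y^{-2}$ yields
\[
\int_{\delta_0+\rho}^\infty |w_\alpha(y)|\,dy \le \Big(\int_{\delta_0+\rho}^\infty y^{-2}\,dy\Big)^{1/2}\|yw_\alpha\|_{L^2(y\ge \delta_0+\rho)} \lesssim \|yw_\alpha\|_{L^2(y\ge\delta_0+\rho)}.
\]

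The algebraic manipulations are routine; the main obstacles are the careful bookkeeping of the $\lambda$ scaling factors between $(r,\theta)$ and $(y,x)$, and the handling of the zero mode, where the Green function formula degenerates and one must instead integrate the radial ODE using an auxiliary boundary condition inherited from no-slip.
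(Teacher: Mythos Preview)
Your proof is correct and follows essentially the same route as the paper: both pass to the original polar coordinates $(r,\theta)$, write $\psi_n$ via the explicit Green function for the radial ODE, extract pointwise bounds on $n\psi_n/r$ and $\psi_n'$, and then rescale back to $(x,y)$. The paper argues via monotonicity of $s\mapsto s^{1+|n|}-s^{1-|n|}$ on $[1,r]$ and of $s\mapsto s^{1-|n|}(r^{|n|}-r^{-|n|})$ on $[r,\infty)$ rather than your branchwise kernel bounds, and relegates the $n=0$ case (with exactly your formula $\psi_0'(r)=\tfrac1r\int_1^r s\,\omega_0\,ds$) to a remark after the proof, but these are cosmetic differences; note only that your $\lambda$-bookkeeping actually leaves a harmless extra factor of $\lambda^2$ rather than cancelling ``exactly''.
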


\begin{proof}
We recall the original elliptic problem on the written in the variables $(\theta,r)\in \mathbb{T}\times [1,\infty)$:
\[
\lw(\pt_r^2+\frac{1}{r}\pt_r-\frac{n^2}{r^2}\rw)\psi_n=\w_n
\]
where $\psi(t,r,\theta)=\psi (\lambda^2t,1+\lambda y,\lambda x)$. We note that $\al=\lambda n$ where $n$ is the original frequency before making the change of variables.
The solution to the above elliptic problem in the original variables is given by 
\beq\label{psi-n}
-\psi_n(r)=\frac{1}{2|n|}\int_1^r \frac{s^{1+|n|}-s^{1-|n|}}{r^{|n|}}\w_n(s)ds+\frac{1}{2|n|}\int_r^\infty \lw(s^{1-|n|}r^{|n|}-\frac{s^{1-|n|}}{r^{|n|}}\rw)\w_n(s)ds.
\eeq
Since the function $s^{1+|n|}-s^{1-|n|}$ is increasing on $[1,r]$ and the function $s^{1-|n|}r^{|n|}-\frac{s^{1-|n|}}{r^{|n|}}$ is decreasing on $[r,\infty)$, we get the pointwise estimate
\[
|n\psi_n(r)|\lesssim (r-r^{1-2n})\int_1^\infty |\w_n(s)|ds\lesssim r\|\w_n\|_{L^1(1,\infty)}.
\]
Hence we obtain 
\beq \label{infty-norm}
\lw\|
\frac{n\psi_n(r)}{r}
\rw\|_{L^\infty}\lesssim \|\w_n\|_{L^1(1,\infty)}.
\eeq
Now in the rescaled variables $(\al,y)$, we get 
\[
 |\al|\|a(y)\psi_\al\|_{\L^\infty}\lesssim \int_0^\infty |w_\al(y)|dy\lesssim \|w_\al\|_{\L^1_\rho}+\|yw_\al\|_{L^2(y\ge \delta_0+\rho)},
\]
upon noting that $a(y)=\frac{1}{1+\lambda y}=\frac{1}{r}$. 
Now we show that $\|a(y)\pt_y\psi_\al\|_{L^\infty}\lesssim \int_0^\infty |w_\al(y)|dy$.
By a direct calculation, we get
\[
-2\psi'_n(r)=-r^{-|n|-1}\int_1^r (s^{1+|n|}-s^{1-|n|})\w_n(s)ds+\lw(r^{|n|-1}-r^{-|n|-1}\rw)\int_r^\infty s^{1-n}w_n(s)ds.
\]
Hence 
\[
|\psi_n'(r)|\lesssim \int_1^\infty |w_n(s)|ds.
\]
The proof is complete.
\end{proof}

\begin{remark} It is known from Section 2.2 of \cite{Mae-ext}, that the Biot-Savart law \ref{psi-n} defines a unique velocity that decays at infinity, under the decaying assumption $r^{1-|n|}\w_n\in L^1$. In our current work, the vorticity satisfies the decaying assumption $\|r^2D_{x,y}^3\w\|_{L^2_{r,\theta}}<\infty$,  hence the Biot-Savart law  \eqref{psi-n} gives a unique velocity solution for all $|n|\ge 1$. We also note that when $n=0$, the stream function equation reduces to 
\[
\pt_r^2\psi_0+\frac{1}{r}\pt_r\psi_0=\w_0
\]
giving $\psi'_0(r)=\frac{1}{r}\int_1^r s\w_0(s) ds$. This gives the Biot-Savart law $(u_r,u_\theta)=\lw(0,-\frac{1}{r}\int_1^r s\w_0(s) ds\rw)$ for $n=0$. We also note that when the frequency $n=\al=0$, the analytic norm in $x$ (or $\theta$) reduces to Sobolev norm.
\end{remark}

In the next lemma, we derive the elliptic estimate for velocity in the analytic norm near the boundary:
\begin{lemma} \label{lem-ellip1}For $\lambda,\delta_0$ and $\rho$ small, there holds 
\[\bega
\|\nabla\psi_\al\|_{\L^\infty_\rho}&\lesssim \|w_\al\|_{\L^1_\rho}+\|yw_\al\|_{L^2(y\ge \delta_0+\rho)},\\
\enda 
\]
\end{lemma}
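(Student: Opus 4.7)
The plan is to extend the pointwise $L^\infty$ bounds from Lemma \ref{lem-ellip2} on the real axis to the complex contours $\pt\Omega_\eta$ for $\eta<\rho$, via analytic continuation of the Biot--Savart representation \eqref{psi-n} together with a suitable contour deformation of the $s$-integration.

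First, I would differentiate \eqref{psi-n} in $r$ as in the proof of Lemma \ref{lem-ellip2}, expressing $\psi_n'(r)$ explicitly as a linear combination of kernel integrals against $\w_n$, and rewrite everything in the scaled variables $y=(r-1)/\lambda$, $z=(s-1)/\lambda$, $\al=\lambda n$. Since the kernel factors $s^{\pm|n|}$ and $r^{\pm|n|}$ are analytic on $\{\Re s,\Re r>0\}$ and $w_\al$ is analytic on $\Omega_\rho$, the formula admits an analytic continuation to $y\in\Omega_\rho$ once the integration is interpreted as a contour integral in $z$. For $y_0\in\pt\Omega_\eta$ with $\eta<\rho$ fixed, I would choose the $z$-contour to start at $z=0$, follow the branch of $\pt\Omega_\eta$ containing $y_0$ out to $\Re z=\delta_0+\eta$, and then continue along the positive real axis to infinity.

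Along this contour, $|s^{\pm|n|}|=|s|^{\pm|n|}$ and $|r^{\pm|n|}|=|r|^{\pm|n|}$ (since $|n|\in\NN$), and because $|r|,|s|$ differ from $\Re r,\Re s$ by only $O(\rho_0)$ inside the pencil, the monotonicity-based pointwise kernel estimates used in Lemma \ref{lem-ellip2} (namely that $s^{1+|n|}-s^{1-|n|}$ is increasing on $[1,r]$ and that $s^{1-|n|}(r^{|n|}-r^{-|n|})$ decays like $r$ for $s\ge r$) transfer with a universal constant. For large $|\al|$, the ratio $(s/r)^{|n|}$ plays the role of the flat-space Green kernel $e^{\mp|\al||y-z|}$ and, for $\eps_0$ sufficiently small, provides the decay needed to absorb the analytic weight $e^{\eps_0(\delta_0+\rho-\Re z)|\al|}$. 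This gives a pointwise bound of the form
\[
|\pt_y\psi_\al(y_0)| + |\al\, a(y_0)\, \psi_\al(y_0)| \lesssim \int_{\Gamma_\eta}|w_\al(z)|\,|dz| + \int_{\delta_0+\eta}^\infty |w_\al(z)|\,dz,
\]
where $\Gamma_\eta$ denotes the analytic portion of the contour.

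Finally, the first integral is bounded by $\|w_\al\|_{L^1(\pt\Omega_\eta)}\le\|w_\al\|_{\L^1_\rho}$, and the tail splits as $\int_{\delta_0+\eta}^{\delta_0+\rho}+\int_{\delta_0+\rho}^\infty$. The first piece lies on the real axis inside $\Omega_\rho$ and is absorbed into $\|w_\al\|_{\L^1_\rho}$ by a further contour deformation of that portion onto $\pt\Omega_\rho$; the second is controlled via Cauchy--Schwarz:
\[
\int_{\delta_0+\rho}^\infty|w_\al(z)|\,dz \le \|1/z\|_{L^2(z\ge\delta_0+\rho)}\,\|zw_\al\|_{L^2(z\ge\delta_0+\rho)} \lesssim \|yw_\al\|_{L^2(y\ge\delta_0+\rho)}.
\]
Taking $\sup_{\eta<\rho}$ yields the claim. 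The main obstacle is the uniform-in-$\al$ kernel estimate on the complex contour: one has to verify, in the narrow-pencil regime $\delta_0,\rho,\lambda\ll 1$ (where $|r|,|s|-1=O(\rho_0)$), that $(s/r)^{|n|}$ remains bounded and in fact provides the requisite exponential decay for large $|\al|$ to match the flat half-space Green function behavior. Once this is established, the rest of the argument is routine bookkeeping.
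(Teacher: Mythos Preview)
Your approach is genuinely different from the paper's, and as written it leaves the central step unverified. The paper does not analytically continue the exact Biot--Savart kernel \eqref{psi-n}. Instead it writes the elliptic equation as
\[
(\pt_y^2-\al^2)\psi_\al=\lambda^2 w_\al-\lambda a(y)\pt_y\psi_\al+\lambda\al^2 b(y)\psi_\al,
\]
inverts the \emph{flat} half-space operator $\pt_y^2-\al^2$ via its Green kernel $\frac{1}{2|\al|}\big(e^{-|\al|(y+z)}-e^{-|\al||y-z|}\big)$, and treats the curvature terms on the right as perturbations. For the flat kernel the analytic-weight manipulation \eqref{exp-abc} is immediate; the $w_\al$ piece ($I_1$) already gives the right-hand side of the lemma. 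The perturbation pieces $I_2,I_3$ contribute, after an integration by parts and the bound $b(y)\lesssim y$ on $\Omega_\rho$, at most $C_0\big(\lambda+\ln(1+\lambda(\delta_0+\rho))\big)\|\nabla\psi_\al\|_{\L^\infty_\rho}$ plus far-field $L^\infty$ remainders on $\{y\ge\delta_0+\rho\}$. Taking $\lambda$ small absorbs the first contribution into the left-hand side, and the far-field remainders are controlled by the real-line estimate of Lemma~\ref{lem-ellip2}. Thus the smallness of $\lambda$, not a complex-contour kernel bound, closes the argument.

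Your route---continuing $(s/r)^{|n|}$ with $s=1+\lambda z$, $r=1+\lambda y$, $|n|=|\al|/\lambda$ onto $\partial\Omega_\eta$---can in principle be made to work, but the ``main obstacle'' you flag is precisely the content of the lemma, and you have not carried it out. Uniformly in $\al$, one must show that along the same branch of $\partial\Omega_\eta$ the kernel obeys $|(s/r)^{|n|}|\le e^{-c|\al||y-z|}$ for some $c>\eps_0$; this requires expanding $\log|1+\lambda z|-\log|1+\lambda y|$ and checking that the imaginary-part corrections are themselves $O(\lambda\eta^2\delta_0)\,|y-z|$ (hence absorbable into the main decay) rather than merely $O(\lambda\delta_0^2)$, which would produce an uncontrolled factor $e^{C|\al|\lambda\delta_0^2}$. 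Your displayed pointwise bound also suppresses the analytic weight on the left while invoking the weighted norm $\|w_\al\|_{\L^1_\rho}$ on the right, so the two sides do not match as stated. These issues are fixable, but they are exactly what the paper's perturbative argument sidesteps.
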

\begin{proof} 
We first show that 
\[
\|\nabla\psi_\al\|_{\L^\infty_\rho}\lesssim \|w_\al\|_{\L^1_\rho}+\|yw_\al\|_{L^2(y\ge \delta_0+\rho)}
\]
Since $\psi_\al$ solves \[
(\pt_y^2-\al^2)\psi_\al=\lambda^2w_\al-\lambda a(y)\pt_y\psi_\al+\lambda \al^2 b(y)\psi_\al\]
with the boundary condition $\psi_\al|_{y=0}=0$, we get 
\[\bega
2\al \psi_\al(z)=&\lambda^2 \int_0^\infty \lw(e^{-\al(y+z)}-e^{-\al|y-z|}\rw)w_\al(y)dy-\lambda \int_0^\infty \lw(e^{-\al(y+z)}-e^{-\al|y-z|}\rw)a(y)\pt_y\psi_\al(y)dy\\
&+\lambda \int_0^\infty \lw(e^{-\al(y+z)}-e^{-\al|y-z|}\rw)\al^2 b(y)\psi_\al(y)dy\\
&=I_1+I_2+I_3,
\enda 
\]
where 
\[
\begin{cases}
I_1&=\lambda^2 \int_0^\infty \lw(e^{-\al(y+z)}-e^{-\al|y-z|}\rw)w_\al(y)dy,\\
I_2&=-\lambda \int_0^\infty \lw(e^{-\al(y+z)}-e^{-\al|y-z|}\rw)a(y)\pt_y\psi_\al(y)dy,\\
I_3&=\lambda \int_0^\infty \lw(e^{-\al(y+z)}-e^{-\al|y-z|}\rw)\al^2 b(y)\psi_\al(y)dy.
\end{cases}
\]
\textbf{Treating $I_1$}. Using the first estimate in \eqref{exp-abc}, we simply bound 
\[\bega 
|I_1|e^{\eps_0(\delta_0+\rho-z)|\al|}&\lesssim \int_0^{\delta_0+\rho}e^{-\frac{|\al|}{2}|y-z|}e^{\eps_0(\delta_0+\rho-y)|\al|} |w_\al(y)|dy+\int_{\delta_0+\rho}^\infty |w_\al(y)|dy\\
&\lesssim  \|w_\al\|_{\L^1_\rho}+\|yw_\al\|_{L^2(y\ge \delta_0+\rho)}.\\
\enda 
\] 
\textbf{Treating $I_2$.} We will show that 
\[
I_2\lesssim \|\al\psi_\al\|_{\L^\infty_\rho}\ln(1+\lambda(\delta_0+\rho))+\lambda \|a(y)\psi_\al\|_{L^\infty(y\ge \delta_0+\rho)}
\]
Since 
\[
I_2=-\lambda  \int_0^\infty \lw(e^{-\al(y+z)}-e^{-\al|y-z|}\rw)a(y)\pt_y\psi_\al(y)dy,
\]
we use integration by parts to get 
\[\bega
I_2&\lesssim \lambda \int_0^\infty |\al|e^{-|\al||y-z|}a(y)|\psi_\al(y)|dy+\lambda \int_0^\infty |a'(y)|e^{-|\al||y-z|}|\psi_\al(y)|dy\\
&\lesssim  \lambda \int_0^\infty |\al|e^{-|\al|y-z|}a(y)|\psi_\al(y)|dy+\lambda^2 \int_0^\infty a(y)^2 e^{-|\al||y-z|}|\psi_\al(y)|dy\\
\enda 
\]
Therefore, 
\beq\label{re-use2}
\bega
|I_2|e^{\eps_0(\delta_0+\rho-z)|\al|}&\lesssim \lambda  \|\al\psi_\al\|_{\L^\infty_\rho}\int_0^{\delta_0+\rho}\frac{1}{1+\lambda y}dy+\lambda \|a(y)\psi_\al\|_{L^\infty(y\ge \delta_0+\rho)}\\
&\quad +\frac{\lambda^2}{|\al|}\|\al\psi_\al\|_{\L^\infty_\rho}\int_0^{\delta_0+\rho}\frac{1}{1+\lambda y}dy+\frac{\lambda^2}{|\al|}\|a(y) \psi_\al\|_{L^\infty(y\ge\delta_0+\rho)}\\
&\lesssim \|\al\psi_\al\|_{\L^\infty_\rho}\ln(1+\lambda(\delta_0+\rho))+\lambda \|a(y)\psi_\al\|_{L^\infty(y\ge \delta_0+\rho)}\\
&\lesssim \|\al\psi_\al\|_{\L^\infty_\rho}\ln(1+\lambda(\delta_0+\rho))+ \|a(y)\al \psi_\al\|_{L^\infty(y\ge \delta_0+\rho)}.
\enda 
\eeq
where we use the fact that $|\al|\ge \lambda$ whenever $\al\neq 0$.~\\
\textbf{Treating $I_3$}. We will show that 
\beq\label{re-use1}
\|I_3\|_{\L^\infty_\rho}\lesssim \lambda \|\nabla\psi_\al\|_{\L^\infty_\rho}+\lw\|a(y)\al \psi_\al \rw\|_{L^\infty(y\ge \delta_0+\rho)} .
\eeq
Indeed, if $y\le \delta_0+\rho$, then $b(y)\le 2y\le 2(\delta_0+\rho)$, and hence 
\[
e^{\eps_0(\delta_0+\rho-z)|\al|}\int_0^{\delta_0+\rho}|\al|e^{-|\al||y-z|}b(y)dy\le 2(\delta_0+\rho)\int_0^\infty |\al|e^{-\frac{1}{2}|\al||y-z|}dy\lesssim 1.
\]
If $y\ge \delta_0+\rho$, then we have 
\[\bega 
&e^{\eps_0(\delta_0+\rho-z)|\al|} \int_{\delta_0+\rho}^\infty e^{-|\al||y-z|}|\al| \frac{\lambda y(2+\lambda y)}{(1+\lambda y)^2}(\al\psi_\al(y))dy\\
&=\lambda e^{\eps_0(\delta_0+\rho-z)|\al|}\int_{\delta_0+\rho}^\infty \frac{|\al||\psi_\al(y)|}{1+\lambda y} \lw(e^{-|\al||y-z|}|\al|y\rw) \cdot\frac{2+\lambda y}{1+\lambda y}dy\\
 &\lesssim  \lw \| \frac{\al \psi_\al}{1+\lambda y}\rw\|_{L^\infty(y\ge \delta_0+\rho)}\lambda \int_{\delta_0+\rho}^\infty |\al||e^{-\frac{1}{2}|\al||y-z|}(|y-z|+z)dy\\
 &\lesssim \lw(\frac{\lambda}{|\al|}+\lambda (\delta_0+\rho)\rw) \lw\|\frac{\al \psi_\al}{1+\lambda y}\rw\|_{L^\infty(y\ge {\delta_0+\rho})} \lesssim  \lw\|a(y)\al \psi_\al\rw\|_{L^\infty(y\ge {\delta_0+\rho})} .\enda 
\]
since $\lambda \lesssim |\al|$.
In summary, we get 
\beq\label{est1}
\bega 
\|\nabla \psi_\al\|_{\L^\infty_\rho}&\le C_0\lw( \|w_\al\|_{\L^1_\rho}+\|yw_\al\|_{L^2(y\ge \delta_0+\rho)}+\lw\|a(y)\al \psi_\al \rw\|_{L^\infty(y\ge \delta_0+\rho)}\rw)\\
&\quad +C_0\lw( \|\nabla \psi_\al\|_{\L^\infty_\rho}\rw)\lw(\lambda+\ln(1+\lambda(\delta_0+\rho))\rw) .
\enda 
\eeq
We note that in the estimate above, the constant $C_0$ does not depend on $\al$ and $\lambda$.
Taking $\lambda$ to be small so that 
\[
\lambda+\ln(1+\lambda(\delta_0+\rho))\le \frac{1}{2C_0},
\]
the last term in the estimate \eqref{est1} can be absorbed to the left hand side, giving 
\[
\|\nabla \psi_\al\|_{\L^\infty_\rho}\lesssim \|w_\al\|_{\L^1_\rho}+\|yw_\al\|_{L^2(y\ge \delta_0+\rho)}+\lw\|a(y)\al \psi_\al \rw\|_{L^\infty(y\ge \delta_0+\rho)}.
\]
Finally, using Lemma \eqref{lem-ellip2} for the last time in the above, we obtain 
\[
\|\nabla \psi_\al\|_{\L^\infty_\rho}\lesssim \|w_\al\|_{\L^1_\rho}+\|yw_\al\|_{L^2(y\ge \delta_0+\rho)}.
\]
The proof is complete.
\end{proof}

In order to close the estimate for velocity in terms of vorticity, we need the following lemma

\begin{proposition} \label{near-bdr-prop} Let $\psi$ be the solution to the elliptic problem \eqref{ellip-new}, and set $\wtd u=\nabla^\perp\psi$. For $k\in\{0,1\}$, there hold
\[\bega 
\|\pt_x^k \wtd u\|_{\L^\infty_\rho}&\lesssim \|w\|_{\mathcal W^{k,1}_\rho}+\|yD^{k+1}_{x,y}w\|_{L^2(y\ge \delta_0+\rho)},\\
\|(y\pt_y)^k\wtd u\|_{\L^\infty_\rho}&\lesssim \|w\|_{\mathcal W^{k,1}_\rho}+\|yD^{k+1}_{x,y} w\|_{L^2(y\ge \delta_0+\rho)},\\
\|y^{-1}\pt_x\psi\|_{\mathcal W^{k,\infty}_\rho}&\lesssim \| w\|_{\mathcal W^{k,1}_\rho} + \|\partial_{ x}w\|_{\cW^{k,1}_\rho} + \|yD_{x,y}^{k+2}w\|_{L^2(y\ge \delta_0 + \rho)}.
\enda 
\]
\end{proposition}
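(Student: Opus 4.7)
The three estimates follow from combining Lemma \ref{lem-ellip1} mode-by-mode with commutator manipulations and a Cauchy--Schwarz summation in the Fourier variable $\alpha\in\lambda\ZZ$. For the \emph{first estimate}, since $\pt_x$ commutes with $\Delta+\lambda L$, the function $\pt_x^k\psi$ solves \eqref{ellip-new} with $w$ replaced by $\pt_x^k w$, and Lemma \ref{lem-ellip1} applied mode-by-mode yields
\[
\|e^{\eps_0(\delta_0+\rho-\Re y)|\alpha|}\,|\alpha|^k\nabla\psi_\alpha\|_{L^\infty_\rho} \lesssim |\alpha|^k\bigl(\|w_\alpha\|_{\L^1_\rho}+\|y w_\alpha\|_{L^2(y\ge\delta_0+\rho)}\bigr).
\]
Summing in $\alpha$, the near-boundary part becomes $\|\pt_x^k w\|_{\mathcal L^1_\rho}\le\|w\|_{\cW^{k,1}_\rho}$. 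For the far-field $L^2$ part, Cauchy--Schwarz together with $\alpha=\lambda n$ gives
\[
\sum_\alpha|\alpha|^k\|yw_\alpha\|_{L^2}\le\Bigl(\sum_\alpha\tfrac{1}{1+|\alpha|^2}\Bigr)^{1/2}\Bigl(\sum_\alpha(1+|\alpha|^{2(k+1)})\|yw_\alpha\|_{L^2}^2\Bigr)^{1/2},
\]
in which the first factor is $\sim\lambda^{-1/2}$ and the second, by Parseval on $\mathbb{T}_{2\pi/\lambda}$, is $\sim\lambda^{1/2}\|yD^{k+1}_{x,y}w\|_{L^2(y\ge\delta_0+\rho)}$; the two powers of $\lambda$ cancel.

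For the \emph{second estimate}, $k=0$ is a special case of the first. For $k\ge 1$, the commutations $[y\pt_y,\pt_x]=0$ and $(y\pt_y)\pt_y=\pt_y(y\pt_y)-\pt_y$ reduce the task to estimating $\nabla(y\pt_y)^j\psi_\alpha$ for $j\le k$. The new second-$y$-derivative produced at each step is handled by the elliptic equation
\[
\pt_y^2\psi_\alpha=\alpha^2\psi_\alpha+\lambda^2 w_\alpha-\lambda a(y)\pt_y\psi_\alpha+\lambda\alpha^2 b(y)\psi_\alpha,
\]
so that the factor $y$ in front produces $y\alpha^2\psi_\alpha$ plus $\lambda$-small remainders. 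Since $|y|\lesssim\delta_0+\rho$ on $\Omega_\rho$, each $(y\pt_y)$ costs at most one additional $\pt_x$ on $\psi$, which is controlled by Lemma \ref{lem-ellip1} applied to $\pt_x^{j+1}\psi$, and the $\alpha$-summation closes exactly as in the first estimate. The $\L^\infty_\rho$-norms of $w_\alpha$ arising from the $\lambda^2 w$ remainder are dominated by a Cauchy integral estimate in the analyticity domain at the cost of a small loss of analyticity radius.

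For the \emph{third estimate}, $\psi|_{y=0}=0$ gives $\pt_x\psi_\alpha(0)=0$ for every $\alpha$, so the fundamental theorem of calculus yields the Hardy-type pointwise bound $|y^{-1}\psi_\alpha(y)|\le\sup_{0\le s\le y}|\pt_y\psi_\alpha(s)|$, whence $\|y^{-1}\pt_x\psi_\alpha\|_{\L^\infty_\rho}\le|\alpha|\|\pt_y\psi_\alpha\|_{\L^\infty_\rho}$. Invoking Lemma \ref{lem-ellip1} and summing as before yields the $k=0$ bound $\lesssim\|\pt_x w\|_{\L^1_\rho}+\|yD^2_{x,y}w\|_{L^2(y\ge\delta_0+\rho)}$. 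For $k\ge 1$, the identity $(y\pt_y)(y^{-1}\pt_x\psi)=\pt_y\pt_x\psi-y^{-1}\pt_x\psi$ together with $[\pt_x,y\pt_y]=0$ writes $\pt_x^i(y\pt_y)^j(y^{-1}\pt_x\psi)$ for $i+j\le k$ as a linear combination of $\pt_x^{i+1}(y\pt_y)^a\pt_y\psi$ and $y^{-1}\pt_x^{i+1}(y\pt_y)^a\psi$ with $a\le j$, each of which is controlled by the first two estimates applied with one extra $\pt_x$ derivative; this accounts for the additional $\|\pt_x w\|_{\cW^{k,1}_\rho}$ term and the $k{+}2$ derivatives on the right-hand side.

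The principal obstacle is the $\lambda$-uniform summation in $\alpha$: a naive $\ell^1$ bound on $\sum_\alpha\|yw_\alpha\|_{L^2}$ diverges as $\lambda\to 0$, and only the precise cancellation of the grid density $\lambda^{-1/2}$ against the Parseval factor $\lambda^{1/2}$ keeps the estimates uniform. A secondary bookkeeping issue is the treatment of the auxiliary $\L^\infty_\rho$-norms of $w_\alpha$ that arise after commuting with the elliptic equation, handled by a standard Cauchy integral estimate in $\Omega_\rho$ with a small sacrifice of analyticity radius.
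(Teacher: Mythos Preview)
Your overall strategy matches the paper's: apply Lemma~\ref{lem-ellip1} mode-by-mode, commute with $\partial_x$, use the elliptic equation to trade $\partial_y^2\psi$ for $\alpha^2\psi+\lambda^2 w+\text{lower order}$, and use the fundamental theorem of calculus together with $\psi|_{y=0}=0$ for the Hardy-type bound on $y^{-1}\partial_x\psi$. Your explicit discussion of the $\lambda^{-1/2}\cdot\lambda^{1/2}$ cancellation in the Cauchy--Schwarz summation is in fact more careful than the paper, which simply writes $\sum_\alpha\|yw_\alpha\|_{L^2}\lesssim\|yD_xw\|_{L^2}$ without comment.

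There is one genuine gap. In the second estimate, the term $\lambda^2 y w_\alpha$ arising from $y\partial_y^2\psi_\alpha$ requires a bound on $\|yw\|_{\L^\infty_\rho}$, and you propose to handle this by ``a Cauchy integral estimate \dots at the cost of a small loss of analyticity radius.'' But the proposition is stated with the \emph{same} $\rho$ on both sides, so a radius loss is not permitted, and the $\lambda^2$ prefactor does not help. The paper avoids this by writing
\[
yw_\alpha(y)=\int_0^y\bigl(w_\alpha(z)+z\partial_z w_\alpha(z)\bigr)\,dz
\]
and using that the analyticity weight $e^{\eps_0(\delta_0+\rho-\Re y)|\alpha|}$ is nonincreasing in $\Re y$ along the integration path, which gives directly
\[
\|yw\|_{\L^\infty_\rho}\lesssim \|w\|_{\L^1_\rho}+\|y\partial_y w\|_{\L^1_\rho}\le \|w\|_{\cW^{1,1}_\rho},
\]
with no loss of radius. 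Replacing your Cauchy-estimate step by this identity closes the argument.
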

\begin{proof}
First, when $k=0$, from Lemma \ref{lem-ellip1}, we have 
\[\bega 
\|\wtd u\|_{\L^\infty_\rho}&\lesssim \|w\|_{\L^1_\rho}+\sum_{\al\in \lambda\Z}\|yw_\al\|_{L^2(y\ge \delta_0+\rho)}\\
&\lesssim \|w\|_{\L^1_\rho}+\|yD_xw\|_{L^2(y\ge \delta_0+\rho)}.\\
\enda 
\]
Now we give the proof for $k=1$. Since $\pt_x\psi$ solves the same elliptic problem with the condition $\pt_x\psi|_{y=0}=0$, we obtain 
\[
\|\pt_x \wtd u\|_{\L^\infty_\rho}\lesssim \|\pt_x w\|_{\L^1_\rho}+\|yD_{x,y}^2w\|_{L^2(y\ge \delta_0+\rho)}.
\]
Now for $\|y\pt_y\wtd u\|_{\L^\infty_\rho}$, we note that 
\[
\bega 
\|y\pt_y(\pt_x\psi)\|_{\L^\infty_\rho}\lesssim \|\pt_y(\pt_x\psi)\|_{\L^\infty_\rho}\lesssim \|\pt_xw\|_{\L^1_\rho}+\|yD_{x,y}^2w\|_{L^2(y\ge \delta_0+\rho)}.
\enda 
\]
Now we have 
\[
y\pt_y(\pt_y\psi)=y\pt_y^2\psi=y(\lambda^2w-\lambda a(y)\pt_y\psi-\lambda b(y)\pt_x^2\psi-\pt_x^2\psi).
\]
Hence we get
\[\bega
\|y\pt_y^2\psi\|_{\L^\infty_\rho}&\lesssim \|yw\|_{\L^\infty_\rho}+\|\wtd u\|_{\L^\infty_\rho}+\|\pt_x\wtd u\|_{\L^\infty_\rho}\\
&\lesssim \lw( \|w\|_{\L^1_\rho}+\|y\pt_yw\|_{\L^1_\rho}\rw)+\|\pt_xw\|_{\L^1_\rho}+\|yD^2_{x,y}w\|_{L^2(y\ge \delta_0+\rho)}\\
&\lesssim \|w\|_{\mathcal W^{1,1}_\rho}+\|yD^2_{x,y}w\|_{L^2(y\ge \delta_0+\rho)}.
\enda 
\]
The proof is complete.
Now we show the last inequality stated in this proposition.
When $k=0$, we have, for any $y\le\delta_0+\rho$:
\[\bega
\pt_x\psi&=\int_0^y \pt_z(\pt_x\psi)(z)dz.\enda 
\]
Since $e^{\eps_0|\al|(\delta_0+\rho-y)}\le e^{\eps_0|\al|(\delta_0+\rho-z)}$, we have
\[
\|y^{-1}\pt_x\psi\|_{\L^\infty_\rho}\lesssim \|\pt_x\wtd u\|_{\L^1_\rho}\lesssim \|\pt_x\wtd u\|_{\L^\infty_\rho}\lesssim \|\pt_xw\|_{\L^1_\rho}+\|yD_{x,y}^2w\|_{L^2(y\ge \delta_0+\rho)}.
\]
For $k=1$, we note that 
\[\begin{cases}
&\pt_x(y^{-1}\pt_x\psi)=y^{-1}\pt_x(\pt_x\psi),\\
&y\pt_y(y^{-1}\pt_x\psi)=-\frac{1}{y}\pt_x\psi+y\pt_y\pt_x\psi.
\end{cases}
\]
Hence 
\[\bega 
\|y\pt_y(y^{-1}\pt_x\psi)\|_{\L^\infty_\rho}&\le  \|y^{-1}\pt_x(\pt_x\psi)\|_{\L^\infty_\rho}+\|\pt_y(\pt_x\psi)\|_{\L^\infty_\rho}\\
&\lesssim \|\pt_x^2w\|_{\L^1_\rho}+ \|yD^3_{x,y}w\|_{L^2(y\ge \delta_0+\rho)}.\enda 
\]
The proposition follows. 
\end{proof}

\subsection{Elliptic estimates in the intermediate region}\label{inter-sec}
We also need the following elliptic estimates in the intermediate region away from the boundary. We first prove the following elementary lemma. 

\begin{lemma} \label{inter-nice}Assume $0<\delta_1<\delta_2<\delta_0$ and let $c\in (0,1)$ be any constant such that $\delta_2<c\delta_0$. Then for any function $F_\al(y)$ and $k\ge 1$, there holds 
\[
|\al|^k\int_0^\infty e^{-|\al||y-z|}|F_\al(y)|dy\le C \lw(\|F_\al\|_{\L^1_\rho}+\int_{c\delta_0}^\infty e^{-\frac{1}{2}|\al||y-z|}|F_\al(y)|dy\rw)
\]
for $z\in [\delta_1,\delta_2]$. The constant $C$ depends only on $\delta_1,\delta_2,\delta_0$ and $k$.
\end{lemma}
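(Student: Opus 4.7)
The plan is elementary: split the $y$-integral at $y = c\delta_0$ and handle the two pieces separately, relying on the strict inequality $\delta_2 < c\delta_0$ to isolate $z$ from the tail, and on the exponential analytic weight hidden in $\|F_\al\|_{\L^1_\rho}$ to pay for the polynomial factor $|\al|^k$ on the bounded piece.

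For the tail $y\ge c\delta_0$, the point $z\in[\delta_1,\delta_2]$ is separated from the region of integration by a positive distance, since $|y-z|=y-z\ge c\delta_0-\delta_2>0$. Writing
\[
|\al|^k e^{-|\al||y-z|} \;=\; \lw(|\al|^k e^{-\frac12|\al|(c\delta_0-\delta_2)}\rw)\cdot e^{-\frac12|\al|(c\delta_0-\delta_2)}\cdot e^{-\frac12|\al|(y-z)}
\]
the first bracketed factor is uniformly bounded in $\al\in\lambda\ZZ$ (for each fixed $k$), so the tail contribution is controlled by $C\int_{c\delta_0}^\infty e^{-\frac12|\al||y-z|}|F_\al(y)|\,dy$, matching the second term on the right-hand side.

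For the bounded piece $0\le y\le c\delta_0$, I would simply drop the exponential factor $e^{-|\al||y-z|}\le 1$ and then use the weight in $\|F_\al\|_{\L^1_\rho}$. Taking $\eta=0$ in the supremum defining the norm and restricting to the real segment $[0,\delta_0+\rho]\subset\pt\Omega_0$, one has $\delta_0+\rho-y\ge (1-c)\delta_0$ for all $y\in[0,c\delta_0]$, hence
\[
e^{\eps_0(1-c)\delta_0|\al|}\int_0^{c\delta_0}|F_\al(y)|\,dy \;\le\; \int_0^{c\delta_0} e^{\eps_0(\delta_0+\rho-y)|\al|}|F_\al(y)|\,dy \;\le\; \|F_\al\|_{\L^1_\rho}.
\]
Multiplying through by $|\al|^k$ and using that $|\al|^k e^{-\eps_0(1-c)\delta_0|\al|}$ is uniformly bounded on $\lambda\ZZ$ gives the first term on the right-hand side. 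Summing the two contributions yields the lemma.

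There is no genuine obstacle here; the argument is essentially bookkeeping. The only subtle point worth highlighting is why the inequality is non-trivial: the factor $|\al|^k$ cannot be absorbed by $e^{-|\al||y-z|}$ alone, because $|y-z|$ can be arbitrarily small when $y$ approaches $z\in[\delta_1,\delta_2]$. The role of the strict inequality $\delta_2<c\delta_0$ is precisely to guarantee a uniform gap in the tail, while the analytic weight built into $\|\cdot\|_{\L^1_\rho}$ supplies the decay needed on the near piece. The constant $C$ depends on $k$, $\delta_1$, $\delta_2$, $\delta_0$, and $\eps_0$, but not on $\al$ or $F_\al$.
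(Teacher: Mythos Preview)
Your proof is correct and follows essentially the same approach as the paper: split the integral at $y=c\delta_0$, use the gap $|y-z|\ge c\delta_0-\delta_2$ on the tail to absorb $|\al|^k$ into half of the exponential, and use the analytic weight $e^{\eps_0(\delta_0+\rho-y)|\al|}$ on the near piece. One small slip: your displayed identity for the tail should be an inequality and has one factor of $e^{-\frac12|\al|(c\delta_0-\delta_2)}$ too many --- the correct bound is simply $|\al|^k e^{-|\al||y-z|}\le \bigl(|\al|^k e^{-\frac12|\al|(c\delta_0-\delta_2)}\bigr)\, e^{-\frac12|\al||y-z|}$.
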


\begin{proof}
Splitting the integral in $y$ into $y\le c\delta_0$ and $y\ge c\delta_0$, we have two cases:

~\\
\textbf{Case 1.} $y\le c\delta_0$.~In this case, we get $y\le \delta_0+\rho$, and moreover 
\[
e^{-\eps_0|\al|(\delta_0+\rho-y)}\le e^{-\eps_0(1-c)|\al|\delta_0} .
\]
Hence 
\[\bega
\int_{0}^{c\delta_0}|\al|^ke^{-|\al||y-z|}|F_\al(y)|dy&\le \int_0^{c\delta_0}e^{-|\al||y-z|} |\al|^k e^{-\eps_0(1-c)|\al|\delta_0}e^{\eps_0|\al|(\delta_0+\rho-y)}|F_\al(y)|dy\\
&\lesssim \|F_\al\|_{\L^1_\rho} .
\enda 
\]

~\\
\textbf{Case 2.} $y\ge c\delta_0$.~In this case we have $|y-z|\ge c\delta_0-\delta_2$. And hence $e^{-\frac{1}{2}|\al||y-z|}\le e^{-\frac{1}{2}(c\delta_0-\delta_2)|\al|}$. 
\[\bega 
&\int_{c\delta_0}^{\infty}|\al|^ke^{-|\al||y-z|}|F_\al(y)|dy\le \int_{c\delta_0}^\infty  |\al|^k e^{-\frac{1}{2}(c\delta_0-\delta_2)|\al|}e^{-\frac{1}{2}|\al|y-z|}|F_\al(y)|dy\\
&\lesssim \int_0^\infty e^{-\frac{1}{2}|\al||y-z|}|F_\al(y)|dy .
\enda 
\]
The proof is complete.
\end{proof}

\begin{proposition} \label{inter2}Let $\psi$ be the solution to the elliptic problem \eqref{ellip-new}, and set $\wtd u=\nabla^\perp\psi$. Then for any $\delta_1<\delta_2<\delta_0$, we have
\[\bega 
\|D^k_{x,y}\wtd u\|_{L^\infty(\delta_1\le y\le \delta_2)}&\lesssim \|w\|_{\L^1_\rho}+\|yD_xw\|_{L^2(y\ge c\delta_0)}  \\
\enda 
\] 
where $c\in (0,1)$ is any constant such that $c\delta_0\in (\delta_2,\delta_0)$.
\end{proposition}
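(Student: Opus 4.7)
The plan is to combine a pointwise integral representation of $\nabla\psi_\al$ with Lemma \ref{inter-nice}, which is designed precisely for points $z$ in the intermediate strip $[\delta_1,\delta_2]$. The argument mirrors the closing scheme of Lemma \ref{lem-ellip1} but with the $e^{-|\al||y-z|}$ kernel split at the cutoff $y=c\delta_0$ rather than at the boundary.

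First I would derive, for each frequency $\al\in\lambda\ZZ$, the representation used in Lemma \ref{lem-ellip1}: namely, inverting $(\pt_y^2-\al^2)$ on $\R_+$ with Dirichlet datum at $y=0$ and moving the $\lambda$-perturbation to the right-hand side, then differentiating in $z$, to obtain
\[
|\pt_z\psi_\al(z)|+|\al\psi_\al(z)|\lesssim \int_0^\infty e^{-|\al||y-z|}\Bigl[\lambda^2|w_\al(y)|+\lambda a(y)|\pt_y\psi_\al(y)|+\lambda\al^2 b(y)|\psi_\al(y)|\Bigr]dy.
\]

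Next, for $z\in[\delta_1,\delta_2]$ I would apply Lemma \ref{inter-nice} to each of the three integrals. The near-boundary pieces ($y\le c\delta_0$) are dominated by the $\L^1_\rho$-norm of the integrand: the $w_\al$-piece gives exactly $\|w_\al\|_{\L^1_\rho}$, while the $\pt_y\psi_\al$- and $\al\psi_\al$-pieces are absorbed via Lemma \ref{lem-ellip1}, noting that $\{y\ge\delta_0+\rho\}\subset\{y\ge c\delta_0\}$ so that $\|yw_\al\|_{L^2(y\ge\delta_0+\rho)}\le\|yw_\al\|_{L^2(y\ge c\delta_0)}$. The far-field pieces ($y\ge c\delta_0$) carry the weight $e^{-\frac12|\al||y-z|}$ with $|y-z|\ge c\delta_0-\delta_2>0$: for the $w_\al$-contribution Cauchy--Schwarz against $1/y$ produces exactly the target norm $\|yw_\al\|_{L^2(y\ge c\delta_0)}$ after summation in $\al$, while the $\pt_y\psi_\al$- and $\psi_\al$-contributions are closed using the uniform bounds $\|\pt_y\psi_\al\|_{L^\infty}+\|a(y)\al\psi_\al\|_{L^\infty}\lesssim\|w_\al\|_{L^1}$ from Lemma \ref{lem-ellip2}. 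This settles the case $k=0$.

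For $k\ge 1$ I would argue separately for $\pt_x$- and $\pt_y$-derivatives. Since $\pt_x^j\psi$ solves the same elliptic problem with source $\pt_x^j w$, the preceding argument applied to $\pt_x^j\psi$ controls $\|\pt_x^j\wtd u\|_{L^\infty([\delta_1,\delta_2])}$, at the cost of using the derivative loss \eqref{Y-derivative} to bound $\|\pt_x^j w\|_{\L^1_{\rho'}}$ by $\|w\|_{\L^1_\rho}$ on a slightly larger $\rho$. To recover mixed $y$-derivatives, I would choose auxiliary cutoffs $\delta_1'<\delta_1<\delta_2<\delta_2'<c\delta_0$, so that $[\delta_1',\delta_2']$ lies strictly inside the $y$-analyticity domain of $\wtd u$ inherited from the analyticity of $w$, and apply standard Cauchy estimates in $y$ within this strip; this trades every $\pt_y$-derivative for an $L^\infty$-value of $\wtd u$ on the enlarged strip $[\delta_1',\delta_2']$, which is in turn controlled by Step 2 applied with the primed parameters.

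The delicate point is the $\lambda\al^2 b(y)\psi_\al$ term in the representation, whose kernel grows like $y$ for large $y$. I would address it by writing $b(y)=ya(y)(2+\lambda y)/(1+\lambda y)$ and applying the pointwise $L^\infty$-bound of Lemma \ref{lem-ellip2} to $a(y)\al\psi_\al$; the resulting far-field integral $\lambda\int_{c\delta_0}^\infty |\al|y e^{-\frac12|\al||y-z|}\,dy$ is finite and of size $O(\lambda)$, so after choosing $\lambda$ sufficiently small it is absorbed on the left-hand side exactly as in the closing argument of Lemma \ref{lem-ellip1}.
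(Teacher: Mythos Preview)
Your treatment of the case $k=0$ matches the paper's: both apply Lemma~\ref{inter-nice} to the Green-kernel representation and close the far-field contributions via Lemma~\ref{lem-ellip2}. Two cosmetic differences: the paper integrates by parts on the $a(y)\pt_y\psi_\al$ term first (so only $\psi_\al$ appears under the integral), and there is no ``absorption on the left-hand side'' needed here---unlike in Lemma~\ref{lem-ellip1}, the right-hand side already involves only globally controlled norms of $\psi_\al$, so no circular estimate arises. For tangential derivatives the paper does not pass to the source $\pt_x^j w$ and invoke \eqref{Y-derivative}; it simply keeps the factor $|\al|^k$ in front and applies Lemma~\ref{inter-nice} directly (this is exactly why that lemma carries the prefactor $|\al|^k$). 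The two routes are equivalent, though the paper's avoids shrinking $\rho$.

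The genuine gap is in your handling of $\pt_y$-derivatives. A Cauchy estimate in $y$ at a real point $z\in[\delta_1,\delta_2]$ requires a sup bound on a \emph{complex} disk $D(z,r)\subset\Omega_\rho$, not merely on a larger real interval $[\delta_1',\delta_2']$; the output of ``Step~2 with the primed parameters'' is a real $L^\infty$ bound and does not feed a Cauchy estimate. You can repair this by instead invoking the complex-domain bound $\|\nabla\psi_\al\|_{\L^\infty_\rho}$ from Lemma~\ref{lem-ellip1}, which does control $\wtd u_\al$ on a genuine complex neighborhood of $[\delta_1,\delta_2]$. The paper takes a different and more direct route: it uses the equation $\pt_y^2\psi=\lambda^2 w-\pt_x^2\psi-\lambda L\psi$ to express $\pt_y^3\wtd u$ on the strip as a combination of $\pt_x^3\wtd u$, lower-order $L$-terms, and $D^2_{x,y}w$ restricted to $[\delta_1,\delta_2]$, the last of which is bounded by $\|w\|_{\L^1_\rho}$ via Lemma~\ref{middle}. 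This reduces all normal derivatives to tangential ones without any appeal to analyticity in $y$.
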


\begin{proof}
We give the proof for $\pt_x^3\wtd u$ and $\pt_y^3\wtd u$ only. The other cases are similar.
Since $\psi$ solves
\[
\triangle \psi=\lambda^2w-\lambda L\psi,\qquad \psi|_{y=0}=0,
\]
we use the Green kernel for the Laplacian $(\pt_y^2-\al^2)$ and integrating by parts for the term $a(y)\pt_y\psi$, to get
\[
|\al|^3 |\wtd u_\al(z)|\lesssim |\al|^3 \int_0^\infty e^{-|\al||y-z|}\lw(\lambda^2|w_\al(y)|+\lambda |\al| a(y)|\psi_\al(y)|+\lambda |a'(y)||\psi_\al(y)|+\lambda \al^2b(y)|\psi_\al(y)|
\rw)dy .
\]
Applying Lemma \ref{inter-nice} for three terms on the right hand side in the above, we get
\beq\label{rhs1}
\bega
|\al|^3|\wtd u_\al(z)|&\lesssim \|w_\al\|_{\L^1_\rho}+ \|\pt_y\psi_\al\|_{\L^1_\rho}+\|\al\psi_\al\|_{\L^1_\rho}+\int_{c\delta_0}^\infty e^{-\frac{1}{2}|\al||y-z|}|w_\al(y)|dy\\
&\quad+\lambda \int_{c\delta_0}^\infty e^{-\frac{1}{2}|\al||y-z|}a(y)|\al||\psi_\al(y)|dy+\lambda \int_{c\delta_0}^\infty e^{-\frac{1}{2}|\al||y-z|}|a'(y)||\psi_\al(y)|dy\\
&\quad+\lambda \int_{c\delta_0}^\infty e^{-\frac{1}{2}|\al||y-z|} \al^2b(y)|\psi_\al(y)|dy .
\enda
\eeq
Now we will bound each term appearing on the right hand side of the above inequality.
Using Proposition \ref{near-bdr-prop}, we have 
\[\bega
\|\pt_y\psi_\al\|_{\L^1_\rho}+\|\al\psi_\al\|_{\L^1_\rho}&\lesssim \|\wtd u_\al\|_{\L^1_\rho}\lesssim \|\wtd u_\al\|_{\L^\infty_\rho}\lesssim \|w_\al\|_{\L^1_\rho}+\|yw_\al\|_{L^2(y\ge \delta_0+\rho)}\\
&\lesssim \|w_\al\|_{\L^1_\rho}+\|yw_\al\|_{L^2(y\ge c\delta_0)}.\\
\enda 
\]
Also, it is obvious that 
\[
\int_{c\delta_0}^\infty e^{-\frac{1}{2}|\al||y-z|}|w_\al(y)|dy\lesssim \|yw_\al\|_{L^2(y\ge c\delta_0)} .
\]
Now for the terms involving $a(y)$ on the right hand side of \eqref{rhs1}, we recall from the proof of \eqref{re-use2} that this term can be bounded by 
\[\bega
&\|\wtd u_\al\|_{\L^\infty_\rho}+\|a(y)\psi_\al\|_{L^\infty(y\ge \delta_0+\rho)}\lesssim \|w_\al\|_{L^1_\rho}+\|yw_\al\|_{L^2(y\ge \delta_0+\rho)}
\enda 
\]
thanks to Proposition \ref{near-bdr-prop} and Lemma \ref{lem-ellip2}. ~\\

Now for the last term on the right hand side of \eqref{rhs1}, we bound this term by
\[\bega
\lw\|
\lambda  \int_{c\delta_0}^\infty e^{-\frac{1}{2}|\al||y-z|} \al^2b(y)|\psi_\al(y)|dy
\rw\|_{\L^\infty_\rho}&\lesssim \lambda \|\nabla\psi_\al\|_{\L^\infty_\rho}+\lw\|\frac{\al \psi_\al}{1+\lambda y}\rw\|_{L^\infty(y\ge \delta_0+\rho)}\\
&\lesssim \|w_\al\|_{\L^1_\rho}+\|yw_\al\|_{L^2(y\ge \delta_0+\rho)} .
\enda 
\]
Here, we use the inequality \eqref{re-use1} and Lemma \ref{lem-ellip2}. The bound for the last term appearing in \eqref{rhs1} is complete. ~\\

Finally, combining the the bounds for all of the terms on the right hand side of \eqref{rhs1}, we get 
\[
|\al|^3|\wtd u_\al(z)|\lesssim \|w_\al\|_{\L^1_\rho}+\|yw_\al\|_{L^2(y\ge c\delta_0)}.
\]
Summing all $\al\in \lambda \mathbb{Z}$, we get 
\beq\label{ineq1}
\bega
\|\pt_x^3\wtd u\|_{L^\infty(\delta_1\le y\le \delta_2)}&\le \sum_{\al}|\al|^3\|\wtd u_\al\|_{L^\infty(\delta_1\le y\le \delta_2)}\\
&\lesssim \sum_{\al}|\al|^3\|\wtd u_\al\|_{L^\infty}\lesssim \|w\|_{\L^1_\rho}+\|yD_xw\|_{L^2(y\ge c\delta_0)}.
\enda
\eeq
On the other hand, for $\pt_y^3\wtd u$, we use $\partial_y^2 \psi = -\partial_x^2 \psi - \lambda L\psi+ \lambda^2 w$ to compute
\[
\bega 
\|\pt_y^3\wtd u\|_{L^\infty(\delta_1\le y\le \delta_2)}&\lesssim \|D_{x,y}^2w\|_{L^\infty(\delta_1\le y\le \delta_2)}+\|\pt_x^3\wtd u\|_{L^\infty(\delta_1\le y\le \delta_2)}\\
&\quad +\lambda \lw\|\pt_y L(\pt_x\psi)+L(\pt_x^2\psi)+\pt_y^2(L\psi)\rw\|_{L^\infty(\delta_1\le y\le \delta_2)}\\
&\lesssim \|w\|_{L^1_\rho}+\|yw\|_{L^2(y\ge c\delta_0)}\\
&\quad+\lambda \lw\|\pt_y L(\pt_x\psi)+L(\pt_x^2\psi)+\pt_y^2(L\psi)\rw\|_{L^\infty(\delta_1\le y\le \delta_2)}.
\enda 
\]
Using $L = a(y)\partial_y + b(y)\partial_x^2$, we thus obtain 
\[\bega
& \sum_{k\le 3}\|\pt_x^k\wtd u\|_{L^\infty(\delta_1\le y\le \delta_2)}+\|D_{x,y}^2w\|_{L^\infty(\delta_1\le y\le \delta_2)}\lesssim \|w\|_{\L^1_\rho}+\|yD_xw\|_{L^2(y\ge c\delta_0)}.
 \enda 
\]
The proof is complete.
\end{proof}

\subsection{Elliptic estimates away from the boundary}
We first show the following simple lemma that will be used in the next proposition.

\begin{lemma}\label{techLem}
Let $f(r),\xi(r)$ be smooth functions on $r\ge 1$, and $\xi(r)=0$ on $[1,R]$. Let $\phi$ solves the elliptic problem 
\[
\lw(\pt_r^2+\frac{1}{r}\pt_r-\frac{n^2}{r^2}\rw)\phi=\xi(r)\pt_r f(r)
\]
with the boundary condition $\phi|_{r=1}=0$.
There holds 
\[
|n|\lw\|\frac{\phi_n(r)}{r}\rw\|_{L^\infty}\lesssim \|\xi f\|_{L^\infty}+\|\xi' f\|_{L^1}
\]
\end{lemma}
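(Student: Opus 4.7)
The natural starting point is the explicit Biot--Savart representation \eqref{psi-n}, applied with source $g(s) := \xi(s)\partial_s f(s)$, so that
\[
-\phi_n(r) = \int_1^r G_1(r,s)\,g(s)\,ds + \int_r^\infty G_2(r,s)\,g(s)\,ds,
\]
with $G_1(r,s)=\frac{1}{2|n|}\frac{s^{1+|n|}-s^{1-|n|}}{r^{|n|}}$ and $G_2(r,s)=\frac{1}{2|n|}s^{1-|n|}(r^{|n|}-r^{-|n|})$. The key algebraic move is the product rule $\xi\,\partial_s f = \partial_s(\xi f) - \xi' f$. The second piece will be estimated directly in $L^1$, while the first is amenable to an integration by parts in $s$ on each of the two intervals $[1,r]$ and $[r,\infty)$.

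Carrying out that integration by parts, the boundary term at $s=1$ vanishes since $\xi\equiv 0$ on $[1,R]$, and the contributions at $s=r$ from the two integrals cancel because $G_1(r,r)=G_2(r,r)=\frac{1}{2|n|}(r-r^{1-2|n|})$ (the Green kernel is continuous across the diagonal). Any contribution at $s=\infty$ is negligible under the implicit decay needed for the right-hand side norms to be finite. The result is the identity
\[
-\phi_n(r) = -\int_1^r \partial_s G_1(r,s)\,(\xi f)(s)\,ds - \int_r^\infty \partial_s G_2(r,s)\,(\xi f)(s)\,ds - \int_1^\infty G(r,s)\,\xi'(s)f(s)\,ds,
\]
in which the derivative has been moved off of $f$.

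To close the estimate, I would establish the uniform pointwise bounds, valid for $1\le s,r$ and $|n|\ge 1$,
\[
\left|\tfrac{|n|}{r}G(r,s)\right|\lesssim 1,\qquad \left|\tfrac{|n|}{r}\partial_s G_1(r,s)\right|\lesssim \tfrac{s^{|n|}}{r^{|n|+1}},\qquad \left|\tfrac{|n|}{r}\partial_s G_2(r,s)\right|\lesssim \tfrac{r^{|n|-1}}{s^{|n|}}.
\]
The first follows by writing $G_1 \le \frac{s}{2|n|}(s/r)^{|n|}\le \frac{r}{2|n|}$ for $s\le r$ and $G_2\le \frac{s}{2|n|}(r/s)^{|n|}\le \frac{r}{2|n|}$ for $s\ge r$ (using $(r/s)^{|n|}\le r/s$ when $|n|\ge 1$). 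The second and third follow by direct differentiation of $G_1$ and $G_2$. Integrating the kernel bounds against $\|\xi f\|_{L^\infty}$ on $[1,r]$ and $[r,\infty)$ gives $\int_1^r s^{|n|}/r^{|n|+1}\,ds \lesssim 1/(|n|+1)$ and $\int_r^\infty r^{|n|-1}/s^{|n|}\,ds \lesssim 1/(|n|-1)$, each $O(1)$; the remaining term is bounded by $\|\xi' f\|_{L^1}$ using the uniform bound on $|n|G/r$. Summing the three contributions yields $|n|\|\phi_n(r)/r\|_{L^\infty}\lesssim \|\xi f\|_{L^\infty} + \|\xi' f\|_{L^1}$.

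The only subtlety is the borderline case $|n|=1$, where the naive $\int_r^\infty s^{-|n|}\,ds$ estimate would diverge logarithmically; however the algebraic factor $(1-|n|)$ appearing in $\partial_s G_2$ vanishes precisely at $|n|=1$, so that term is identically zero and no divergence occurs. The main (modest) obstacle is thus simply to keep track of the $n$-dependence of the various constants to ensure the final bound is uniform in $n$; the cancellations arranged by the integration by parts make this essentially automatic.
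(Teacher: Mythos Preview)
Your approach matches the paper's: both start from the explicit representation \eqref{psi-n}, integrate by parts to move $\partial_s$ from $f$ onto the kernel and $\xi$ (using that $\xi(1)=0$ and that the two kernels agree on the diagonal $s=r$), and then bound the resulting integrals uniformly in $n$. One minor slip---your stated bounds on $\tfrac{|n|}{r}\partial_s G_j$ are missing a factor of $|n|$ (since e.g.\ $\partial_s G_1 = \tfrac{1}{2|n|}\big((1+|n|)s^{|n|}-(1-|n|)s^{-|n|}\big)r^{-|n|}$)---but this is harmless because your subsequent integrals are $O(1/|n|)$, so the product remains uniformly $O(1)$ and the conclusion stands.
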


\begin{proof}
As in \eqref{psi-n}, we get, for $n>0$
\[
-2n\phi_n(r)=\int_1^r\frac{s^{1+n}-s^{1-n}}{r^n}\xi(s)f'(s)ds+\int_r^\infty \lw(s^{1-n}r^n-\frac{s^{1-n}}{r^n}\rw)\xi(s)f'(s)ds.
\]
By integrating by parts, we get 
\[\bega 
-2n\phi_n(r)&=-\int_1^r \frac{(1+n)s^n-(1-n)s^{-n}}{r^n}\xi(s)f(s)ds-\int_1^r \frac{s^{1+n}-s^{1-n}}{r^n}\xi'(s)f(s)ds\\
&\quad +\frac{(r^{1+n}-r^{1-n})\xi(r)f(r)}{r^n}-\int_r^{\infty}\lw((1-n)s^{-n}r^n-(1-n)s^{-n}r^{-n}\rw)\xi(s)f(s)ds\\
&\quad -\int_r^\infty \lw(s^{1-n}r^n-\frac{s^{1-n}}{r^n}\rw)\xi'(s)f(s)ds.\\
\enda 
\]
Hence 
\[\bega 
n|\phi_n(r)|&\lesssim r \lw(\|\xi f\|_{L^\infty}+\|\xi' f\|_{L^1}\rw).
\enda 
\]
The proof is complete.
\end{proof}

Finally, we state the main Proposition for this section:

\begin{proposition}
Let $\psi$ be the solution to the elliptic problem \eqref{ellip-new}, and set $\wtd u=\nabla^\perp\psi$. For any $\delta\in (0,\delta_0)$, $k\ge 0$ and $\rho\in (\delta_0/4,\delta_0)$, one has 
\beq\label{D-k-new}
\bega
\|a(y)D^k_{x,y}\wtd u\|_{L^\infty (y\ge \delta)}&\lesssim \|w\|_{\L^1_\rho}+\|yD^{k+1}_{x,y}w\|_{L^2(y\ge \delta_0/2)},\\
\|D_{x,y}^k(a(y)\wtd u)\|_{L^2(y\ge \delta)}&\lesssim \|w\|_{\L^1_\rho}+\|yD_{x,y}^{k}w\|_{L^2(y\ge \delta_0/2)}.
\enda
\eeq
for $k\ge 0$, where $a(y)=\frac{1}{1+\lambda y}$.
\end{proposition}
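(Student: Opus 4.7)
Working in the original polar coordinates, where the perturbation $\lambda L$ is absent and $\psi_n$ is given by the Biot--Savart formula \eqref{psi-n}, both estimates reduce to pointwise and $L^2$ bounds on the resulting integral in the region $r\ge 1+\lambda\delta$; the weight $a(y)=1/r$ on the left-hand side is exactly the $1/r$ decay that \eqref{psi-n} produces at infinity. The zero mode $n=0$ is handled separately via $\pt_r\psi_0(r)=r^{-1}\int_1^r s\w_0(s)\,ds$. For $|n|\ge 1$, Lemma~\ref{lem-ellip2} already delivers the pointwise inequality
\[
|a(y)\al\psi_\al(y)|+|\pt_y\psi_\al(y)|\lesssim \int_0^\infty|w_\al(z)|\,dz \lesssim \|w_\al\|_{\L^1_\rho}+\|yw_\al\|_{L^2(y\ge\delta_0+\rho)},
\]
the second step being Cauchy--Schwarz with the weight $y$ on the tail $z\ge \delta_0+\rho$. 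Summation over $\al$ with the analyticity weight turns the first term into $\|w\|_{\L^1_\rho}$, while the triangle-inequality summation of $\|yw_\al\|_{L^2}$ costs one $x$-derivative by Cauchy--Schwarz in $\al$, which accounts for the index $k+1$ on the right-hand side of the $L^\infty$ estimate.

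For $k\ge 1$, horizontal derivatives are immediate since $\pt_x^k\psi$ solves the same Dirichlet problem with source $\pt_x^k w$; vertical derivatives are traded using
\[
\pt_y^2\psi=\lambda^2 w-\pt_x^2\psi-\lambda a(y)\pt_y\psi-\lambda b(y)\pt_x^2\psi,
\]
each application costing at most one extra derivative on $w$ or one extra $\pt_x$ already controlled by the previous step. Lemma~\ref{techLem} absorbs the $a(y)\pt_y\psi$ part of the perturbation $\lambda L\psi$ in a form that does not demand further regularity on $w$, and an induction on $k$ closes the $L^\infty$ bound.

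The $L^2$ estimate proceeds from the same representation. From \eqref{psi-n} one reads off kernels $K_j(y,z)$ such that $|a\pt_y\psi_\al(y)|+|\al a\psi_\al(y)|\lesssim \sum_j\int K_j(y,z)|w_\al(z)|\,dz$, and each $K_j$ is $L^2$-integrable in $y\ge \delta$ uniformly in $z$ because the $r^{|n|}$ growth factor is paired with $\int_r^\infty s^{1-|n|}w_n\,ds$, which decays at infinity thanks to $\|yw\|_{L^2(y\ge\delta_0/2)}$. Splitting the $z$-integral at $z=\delta_0+\rho$, the near-boundary piece is absorbed into $\|w_\al\|_{\L^1_\rho}$ and the tail into $\|yw_\al\|_{L^2(y\ge \delta_0/2)}$ by Young's inequality. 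Higher derivatives are again handled by differentiating in $x$ (replacing $w_\al$ by $\al^k w_\al$ in the kernel) and by the equation for $y$-derivatives; Plancherel summation in $\al$ does not lose any derivative, consistent with the index $k$ (rather than $k+1$) on the right-hand side of the $L^2$ bound. The main obstacle is the coexistence of large $|n|$ and large $r$ in the Biot--Savart kernel: the growth $r^{|n|}$ must be defeated by the polynomial decay of $w$ carried by the weighted $L^2$ tail, and Lemma~\ref{techLem} is exactly what enforces this cancellation without burning additional regularity on $w$.
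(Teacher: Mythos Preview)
Your overall plan is correct and tracks the paper closely for $k=0,1$ and for pure $\partial_x$-derivatives: both of you invoke Lemma~\ref{lem-ellip2} for the base case, handle $\partial_x^k$ by noting that $\partial_x^k\psi$ solves the same Dirichlet problem (absorbing $\|\partial_x^k w\|_{\cL^1_{\rho'}}$ into $\|w\|_{\cL^1_\rho}$ by shrinking the radius to $\rho'=\delta_0/8$), and cost one extra $x$-derivative when summing $\sum_\alpha\|yw_\alpha\|_{L^2}$. Where you and the paper diverge is in the induction for vertical derivatives at $k\ge 2$. The paper does \emph{not} use direct pointwise substitution of $\partial_y^2\psi=\lambda^2w-\partial_x^2\psi-\lambda L\psi$; instead it introduces a cutoff $\zeta$ supported in $\{y\ge\delta/2\}$, writes an elliptic equation for $\zeta\partial_y^k\psi$ with zero Dirichlet data, and re-applies the Biot--Savart bound of Lemma~\ref{lem-ellip2} to each forcing piece $F_1,F_2,F_3$. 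This produces commutator terms, and one of them, $F_{2,2}=-\lambda a'(y)\zeta\partial_y^k\psi_\alpha$, has a logarithmically divergent $L^1_y$ integral (since $\int a(y)\,dy=\infty$); Lemma~\ref{techLem} is invoked precisely there, as an integration-by-parts device that rewrites $F_{2,2}=\xi\partial_y(\partial_y^{k-1}\psi_\alpha)$ and trades the bad $L^1$ integral for $\|\xi\partial_y^{k-1}\psi_\alpha\|_{L^\infty}+\|\xi'\partial_y^{k-1}\psi_\alpha\|_{L^1}$, which are lower order. Your direct substitution scheme is pointwise throughout, so this divergence never appears and Lemma~\ref{techLem} is actually \emph{unnecessary} in your approach; your description of it as ``enforcing the $r^{|n|}$ vs.\ decay cancellation'' misidentifies its role.

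Two smaller remarks. First, your direct substitution, to be complete, needs the pointwise bound on $a(y)\partial_y^{k-1}w$ for $y\ge\delta$; this does not come for free from the right-hand side and requires the same cutoff-and-integrate argument the paper uses in its $k=1$ step (write $\zeta\partial_y^{k-1}w_\alpha(z)=\int_0^z\partial_y(\zeta\partial_y^{k-1}w_\alpha)$, use Lemma~\ref{middle} on $[\delta/2,\delta_0/2]$, and Cauchy--Schwarz with the weight $y$ on the tail). Second, your $L^2$ claim that the Biot--Savart kernels are ``$L^2$-integrable in $y\ge\delta$ uniformly in $z$'' is false as written: for $s<r$ the kernel behaves like $(s/r)^{|n|}$ and $\int_s^\infty(s/r)^{2|n|}\,dr\sim s/|n|$, which is not uniform in $s$. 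The $L^2$ bound goes through, but via a weighted Hardy-type inequality rather than uniform kernel integrability; the paper itself only says the $L^2$ estimates ``follow similarly'' and gives no further detail.
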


\begin{proof} We first give the proof for $\|D^k_{x,y}\wtd u\|_{L^\infty(y\ge \delta)}$.
When $k=0$, the inequality \beq\label{re-use3}
\|a(y)\wtd u\|_{L^\infty}\lesssim \|w\|_{\L^1_\rho}+\|yD_xw\|_{L^2(y\ge \delta_0/2)}
\eeq
follows from Lemma \ref{lem-ellip2}. Moreover, we also have 
\[\bega 
\|a(y)\pt_x^k\wtd u\|_{L^\infty}&\lesssim \|\pt_x^kw\|_{\L^1_{\delta_0/8}}+\|yD_x^{k+1}w\|_{L^2(y\ge \delta_0/2)}\\
&\lesssim \|w\|_{\L^1_\rho}+\|yD_x^{k+1}w\|_{L^2(y\ge \delta_0/2)}\enda 
\]
where we use the fact that $\rho\ge \frac{\delta_0}{8}$. We can now assume that $D^k_{x,y}=\pt_y^k$ and we will use induction on $k\ge 0$. 
We first give a proof for $k=1$, which is $\pt_y$. 
We have 
\beq\label{diff}
\begin{cases}
&\pt_y(\pt_x\psi)=\pt_x(\pt_y\psi)\\
&\pt_y(\pt_y\psi)=\pt_y^2\psi=\lambda^2 w-\pt_x^2\psi-\lambda a(y)\pt_y \psi-\lambda b(y)\pt_x^2\psi.
\end{cases}
\eeq
For the first term $\pt_y(\pt_x\psi)$, we simply bound 
\[
\|\pt_x\pt_y\psi\|_{L^\infty(y\ge \delta)}\le \|\pt_x\wtd u\|_{L^\infty(y\ge \delta)}\lesssim \|\pt_xw\|_{\L^1_\rho}+\|yD_x^2w\|_{L^2(y\ge \delta_0/2)}
\]
For the second term $\pt_y^2\psi$ in  \eqref{diff} we get, for any $y\ge\delta$: 
\[\bega 
|a(y)\pt_y^2\psi_\al(y)|&\le a(y)|w_\al(y)|+a(y)|\al|^2|\psi_\al(y)|+a(y)|\pt_y\psi_\al(y)|+\lambda a(y)b(y)|\al|^2|\psi_\al(y)|\\
&\lesssim \|w_\al\|_{L^\infty(y\ge \delta)}+\|a(y)\al \wtd u_\al\|_{L^\infty}+\|a(y)\wtd u_\al\|_{L^\infty}\\
&\lesssim \|w_\al\|_{L^\infty(y\ge \delta)}+\| w_\al\|_{\L^1_\rho}+\|y\al w_\al\|_{L^2(y\ge \delta_0/2)}
.\enda 
\]
%where we used the fact that $\lambda b(y)=1-\frac{1}{(1+\lambda y)^2}\lesssim  1$.
Let $\zeta$ be a cut-off function so that 
\beq\label{cutoff}
\zeta(y)=
\begin{cases}
0&\qquad y\le \delta/2,\\
1&\qquad y\ge \delta.
\end{cases}
\eeq
Then we have 
\[
\|a(y)w_\al\|_{L^\infty(y\ge \delta)}\lesssim \|w_\al\|_{L^\infty(y\ge \delta)}\le \|\zeta (z)w_\al(z)\|_{L^\infty}.
\]
We have 
$$ \zeta(z)w_\al(z) =\int_0^z \zeta'(y)w_\al(y)dy+\int_0^z \zeta (y)\pt_yw_\al(y)dy.
$$
Hence, for every $z\ge 0$, we bound
\[\bega 
|\zeta (z)w_\al(z)|&\le \|w_\al\|_{L^\infty(\delta/2\le y\le \delta)}+\int_{\delta/2}^\infty |\pt_y w_\al(z)|dz\\
&\lesssim \|w_\al\|_{\L^1_\rho}+\|\pt_yw_\al\|_{L^\infty(\delta/2\le y \le \delta_0/2)}+\int_{\delta_0/2}^\infty |\pt_yw_\al(z)|dz\\
&\lesssim \|w_\al\|_{\L^1_\rho}+\|y\pt_yw_\al\|_{L^2(y\ge \delta_0/2)}.
\enda 
\]
Combining the above inequalities, we obtain 
\[\bega
\|a(y)\pt_y^2\psi_\al\|_{L^\infty(y\ge \delta)}&\lesssim \|w\|_{\L^1_\rho}+\sum_{\al\in \lambda \Z}\lw(\|y\pt_yw_\al\|_{L^2(y\ge \delta_0/2)}+\|y\al w_\al\|_{L^2(y\ge \delta_0/2)}\rw)\\
&\lesssim \|w\|_{\L^1_\rho}+\|yD_{x,y}^2w\|_{L^2(y\ge \delta_0/2)}.
\enda \]
This finishes the proof for $k=1$. 
Now we assume $k\ge 1$. We proceed by induction on the number of derivatives of $y$. Assume that the inequality is true for $k-1$, we show that it is also true for $k$. 
We recall that $\zeta$ be a cut-off function defined in \eqref{cutoff}. Then $\zeta \pt_y^k \psi$ solves the elliptic problem 
\beq\label{deri}
\bega 
(\triangle+\lambda L) (\zeta \pt_y^k\psi)=&\lambda^2 (\zeta \pt_y^kw)+2\zeta'(y)\pt_y\pt_y^k\psi+\zeta''(y)\pt_y^k\psi\\
&+\lambda L(\zeta \pt_y^k\psi)-\lambda \zeta \pt_y^k(L\psi)
\enda 
\eeq
with the boundary condition $\zeta \pt_y^k\psi|_{y=0}=0$.
In Fourier frequency $\al$, the right hand side in the above can be decomposed into $F_1+F_2+F_3$ where 
\[
\begin{cases}
F_1&=\lambda^2 (\zeta \pt_y^kw_\al)+2\zeta'(y)\pt_y\pt_y^k\psi+\zeta''(y)\pt_y^k\psi_\al,\\
F_2&=\lambda a(y)\pt_y(\zeta\pt_y^k\psi_\al)-\lambda \zeta \pt_y^k(a(y)\pt_y\psi_\al),\\
F_3&=-\lambda b(y)\al^2  (\zeta \pt_y^k\psi_\al)+\lambda \zeta \pt_y^k(b(y)\al^2\psi_\al).\\\end{cases}
\]
From the equation \eqref{deri}, we get 
\[
\zeta \pt_y^k\psi=\Psi_1+\Psi_2+\Psi_3
\]
where $(\triangle+\lambda L)\Psi_i=F_i$ for $1\le i\le 3$ with the boundary condition $\Psi_i|_{y=0}=0$ (this can also be seen from the formula \eqref{psi-n}). We also denote 
\begin{equation}\label{def-UUUi}U_i=\nabla^\perp \Psi_i.\end{equation}

~\\
\textbf{Treating $U_1$}.~
Using the same argument as in Lemma \ref{lem-ellip2}, for every $z\ge 0$, we get 
\[\bega 
\lw |a(z) U_1(z)\rw |&\lesssim \int_0^\infty \lw(|\zeta \pt_y^kw_\al(y)|+|\zeta'(y)\pt_y^k\wtd u_\al(y)|+|\zeta''(y)||\pt_y^{k-1}\wtd u_\al(y)|\rw)dy\\
&\lesssim \|y\zeta \pt_y^kw_\al\|_{L^2}+\|\zeta'(y)D^k_{x,y}\wtd u_\al\|_{L^1}\lesssim \|yD^k_{x,y}w_\al\|_{L^2(y\ge \delta/2)}+\|D^k_{x,y}\wtd u_\al\|_{L^1(\delta/2\le y\le \delta)}\\
&\lesssim \|yD^k_{x,y}w_\al\|_{L^2(\delta/2\le y\le \delta_0/2)}+\|yD^k_{x,y}w_\al\|_{L^2(y\ge\delta_0/2)}+\lw(\|w\|_{\L^1_\rho}+\|yw_\al\|_{L^2(y\ge \delta_0/2)}\rw)\\
&\lesssim \|w_\al\|_{\L^1_\rho}+ \|yD^k_{x,y}w_\al\|_{L^2(y\ge \delta_0/2)}.
\enda \]

~\\
\textbf{Treating $U_2$.}~
We have 
\[\bega 
F_2&=-\lambda \sum_{1\le i\le k}{k \choose i}\zeta(y)\pt_y^ia(y)\pt_y^{k+1-i}\psi_\al+\lambda a(y)\zeta'(y)\pt_y^k\psi_\al\\
&=-\lambda \sum_{2\le i\le k}{k \choose i}\pt_y^i a(y)\zeta(y)\pt_y^{k+1-i}\psi_\al-\lambda a'(y)\zeta(y)\pt_y^{k}\psi_\al+\lambda a(y)\zeta'(y)\pt_y^k\psi_\al\\
&=F_{2,1}+F_{2,2}+F_{2,3}.
\enda 
\]
Hence we get $U_2=\sum_{i=1}^3 (\triangle+\lambda L)^{-1}F_{2,i}=\sum_{i=1}^3U_{2,i}$. Arguing as in Lemma \eqref{lem-ellip2}, for every $z\ge 0$, we get 
\[\bega 
a(z)U_{2,1}(z)&\lesssim\max_{2\le i\le k} \int_0^\infty |\pt_y^i a(y)\zeta (y)\pt_y^{k-1}\wtd u_\al(y)|dy\\
&\lesssim \max_{2\le i\le k}\int_0^\infty a(y)^{i+1}\zeta (y)|\pt_y^{k-1}\wtd u_\al(y)|dy\\
&\lesssim \|a(y)\zeta (y)\pt_y^{k-1}\wtd u_\al(y)\|_{L^\infty} \max_{2\le i\le k}\int_0^\infty a(y)^idy\\
&\lesssim \|a(y)\zeta (y)\pt_y^{k-1}\wtd u_\al\|_{L^\infty}\lesssim \|w\|_{\L^1_\rho}+\|yD_{x,y}^{k-1}w_\al\|_{L^2(y\ge\delta_0/2)}.
\enda 
\]
Here, we have used the fact that $\pt_y^ia(y)\lesssim a(y)^{i+1}$, $\int_0^\infty a(y)^idy\lesssim 1$ for all $i\ge 2$, and the induction hypothesis in the last inequality.

Now we turn to $F_{2,2}=-\lambda a'(y)\zeta(y)\pt_y^k\psi_\al$. 
Applying Lemma \ref{techLem} for $\xi(y)=-\lambda a'(y)\zeta(y)$ and $f(y)=\pt_y^{k-1}\psi_\al$, for every $z\ge 0$, we get 
\[\bega 
a(z)U_{2,2}(z)&\lesssim \|a'(y)\zeta (y)\pt_y^{k-1}\psi_\al\|_{L^\infty}+\|\pt_y(a'(y)\zeta(y))\pt_y^{k-1}\psi_\al\|_{L^1}\\
&\lesssim \|a(y)\zeta(y) \pt_y^{k-1}\psi_\al\|_{L^\infty}+\|a''(y)\zeta(y)\pt_y^{k-1}\psi_\al\|_{L^1}+\|a'(y)\zeta'(y)\pt_y^{k-1}\psi_\al\|_{L^1}\\
&\lesssim \|w_\al\|_{\L^1_\rho}+\|yD_{x,y}^{k-1}w_\al\|_{L^2(y\ge\delta_0/2)}+\|a''(y)\|_{L^1}\|a(y)\zeta (y)\pt_y^{k-1}\psi_\al\|_{L^\infty}+\|\pt_y^{k-1}\psi_\al\|_{L^\infty(\delta/2\le y\le \delta)}\\
&\lesssim \|w_\al\|_{\L^1_\rho}+\|yD_{x,y}^{k-1}w_\al\|_{L^2(y\ge \delta_0/2)}.
\enda 
\]

Finally, for $U_{2,3}$ which solves $(\triangle +\lambda L)U_{2,3}=F_{2,3}=\lambda a(y)\zeta'(y)\pt_y^{k}\psi_\al$, we use Lemma \ref{lem-ellip2} again, for every $z\ge 0$, to get 
\[
a(z)U_{2,3}(z)\lesssim \int_0^\infty \lambda a(y)|\zeta'(y)\pt_y^{k}\psi_\al(y)|dy\lesssim \|\pt_y^{k}\psi_\al\|_{L^\infty(\delta/2\le y\le \delta)}\lesssim \|w_\al\|_{\L^1}+\|yw_\al\|_{L^2(y\ge \delta_0/2)}.
\]
The proof for $U_2$ is complete.

~\\
\textbf{Treating $U_3$}.~We recall that 
\[\bega 
F_3&=-\lambda b(y)\al^2  (\zeta \pt_y^k\psi_\al)+\lambda \zeta \pt_y^k(b(y)\al^2\psi_\al)\\
&= \lambda \al^2 \sum_{i=1}^k {k \choose i} \zeta(y)\pt_y^ib(y)\pt_y^{k-i}\psi_\al(y).
\enda 
\]
Using Lemma \ref{lem-ellip2} for the equation $(\triangle+\lambda L)\Psi_3=F_3$, for $z\ge 0$, we get
\[\bega 
|a(z)U_3(z)|&\lesssim \max_{1\le i\le k} \lambda \al^2 \int_0^\infty \zeta(y)|\pt_y^ib(y)||\pt_y^{k-i}\psi_\al(y)|dy\\
&\lesssim \max_{1\le i\le k}|\al|\int_0^\infty a(y)^{i+2}\zeta(y)|\al \pt_y^{k-i}\psi_\al(y)|dy\\
&\lesssim \|a(y)\zeta(y) D_{x,y}^{k-1}\wtd u_\al\|_{L^\infty}\max_{1\le i\le k}\int_0^\infty a(y)^{i+1}dy\\
&\lesssim \|w_\al\|_{\L^1_\rho}+\|yD_{x,y}^{k-1}w_\al\|_{L^2(y\ge \delta_0/2)}.
\enda 
\]
where we use the induction hypothesis in the last inequality, and the fact that $\pt_y^i b(y)\lesssim a(y)^{i+2}$ for all $i\ge 1$. The proof is complete for the $\|\cdot\|_{L^\infty(y\ge \delta)}$ norm of the velocity. The estimates in $L^2$ norm follow similarly. 
\end{proof}

\section{Bilinear estimates}
In this section, we recall the bilinear estimates for the nonlinear terms. 
We define the nonlinear quantity for $w$ as follows:
\beq \label{nonlinear N}
\bega 
N_\rho(w,k)=&\|w\|_{\mathcal W^{k+1,1}_\rho}\lw(\|w\|_{\mathcal W^{k,1}_\rho}+\|yD_{x,y}^{k}w\|_{L^2(z\ge \delta_0 +\rho)}\rw)\\
&+\|w\|_{\mathcal W^{k,1}_\rho}\|yD_{x,y}^{k+2}w\|_{L^2(z\ge \delta_0 +\rho)}.
\enda 
\eeq

\begin{proposition} \label{bilinear1}Let $N_\rho(w,k)$ be the nonlinear quantity defined in \eqref{nonlinear N}, and $\psi=(\triangle+\lambda L)^{-1}(\lambda^2w)$ be the corresponding stream function defined in the elliptic problem \eqref{ellip-new}. 
For $k\in \{0,1\}$, there hold
\[
\|\wtd u\cdot \nabla w\|_{\mathcal W_\rho^{k,1}}\lesssim N_\rho(w,k)
\]
where $\wtd u=\nabla^\perp \psi$.
\end{proposition}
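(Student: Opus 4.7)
The starting point is the pointwise identity
\[
\wtd u \cdot \nabla w = (\partial_y \psi)(\partial_x w) - (\partial_x \psi)(\partial_y w),
\]
coming from $\wtd u = \nabla^\perp \psi = (\partial_y\psi,-\partial_x\psi)$. The principal obstacle is that neither $\partial_y w$ nor $\partial_y^2 w$ is controlled by the $\mathcal L^1_\rho$ scale used in $\mathcal W^{k,1}_\rho$: only the scaled derivatives $(y\partial_y)^j w$ are. The structural fact that saves us is $\psi|_{y=0}=0$, so $\partial_x\psi$ vanishes at the boundary and $y^{-1}\partial_x\psi$ is controlled in $\mathcal W^{k,\infty}_\rho$ by the last estimate of Proposition~\ref{near-bdr-prop}. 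Accordingly, I would rewrite
\[
\wtd u \cdot \nabla w = (\partial_y \psi)(\partial_x w) - \Bigl(\tfrac{\partial_x \psi}{y}\Bigr)(y\partial_y w),
\]
trading the dangerous $\partial_y w$ for the admissible $y\partial_y w$ at the cost of pushing one $y$ onto the stream function factor.

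For the base case $k=0$, I would apply the algebra estimate \eqref{Y-algebra} to each product and then invoke Proposition~\ref{near-bdr-prop}:
\[
\|\wtd u\cdot\nabla w\|_{\mathcal L^1_\rho} \;\le\; \|\partial_y\psi\|_{\mathcal L^\infty_\rho}\,\|\partial_x w\|_{\mathcal L^1_\rho} \;+\; \|y^{-1}\partial_x\psi\|_{\mathcal L^\infty_\rho}\,\|y\partial_y w\|_{\mathcal L^1_\rho}.
\]
The velocity factors are bounded by the first and third inequalities of Proposition~\ref{near-bdr-prop} (the latter supplying the $\|yD^{2}_{x,y}w\|_{L^2(y\ge\delta_0+\rho)}$ far-field contribution), while the $w$-derivative factors are directly controlled by $\|w\|_{\mathcal W^{1,1}_\rho}$. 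Collecting the resulting products puts the total inside the bilinear quantity $N_\rho(w,0)$ as defined in \eqref{nonlinear N}.

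For $k=1$, I would distribute $\partial_x$ and $y\partial_y$ by Leibniz across the same decomposition. The $\partial_x$ case is a verbatim repeat of the $k=0$ analysis with one extra $x$-derivative on either factor; Proposition~\ref{near-bdr-prop} at index $k=1$ takes care of the velocity side. The subtle case is $y\partial_y$. When $y\partial_y$ lands on the velocity coefficient, I invoke the $k=1$ version of Proposition~\ref{near-bdr-prop}; when it lands on the already-paired derivative $y\partial_y w$, the algebraic identity $y\partial_y(y\partial_y w)=(y\partial_y)^2 w$ keeps the expression inside $\mathcal W^{2,1}_\rho$ without producing a bare $\partial_y$; and when it lands on $\partial_x w$ we simply use $y\partial_y\partial_x w = \partial_x(y\partial_y w)$. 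Applying \eqref{Y-algebra} term-by-term and Proposition~\ref{near-bdr-prop} at the appropriate index then delivers the claimed bound $\lesssim N_\rho(w,1)$.

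The main obstacle throughout is purely bookkeeping in the $y\partial_y$ case: one must organise the derivatives so that no bare $\partial_y$ ever appears on either factor of a product, since such a term cannot be absorbed either by $\mathcal W^{k,1}_\rho$ or by Proposition~\ref{near-bdr-prop}. The initial rewrite $\partial_x\psi\cdot\partial_y w = (y^{-1}\partial_x\psi)\cdot(y\partial_y w)$, combined with the commutation identities $y\partial_y(y\partial_y w)=(y\partial_y)^2 w$ and $y\partial_y\partial_x = \partial_x y\partial_y$, accomplishes precisely this. Everything remaining is a routine application of the algebra estimate \eqref{Y-algebra} and of Proposition~\ref{near-bdr-prop}.
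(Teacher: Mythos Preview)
Your proposal is correct and follows essentially the same approach as the paper's proof: the key rewriting $\partial_x\psi\cdot\partial_y w=(y^{-1}\partial_x\psi)(y\partial_y w)$, the algebra estimate \eqref{Y-algebra}, and the elliptic bounds of Proposition~\ref{near-bdr-prop} are exactly what the paper uses. The only cosmetic difference is that in the $y\partial_y$ case the paper expands $y\partial_y(y^{-1}\partial_x\psi)=-y^{-1}\partial_x\psi+\partial_x\partial_y\psi$ explicitly and bounds the second piece via $\|\partial_x\wtd u\|_{\mathcal L^\infty_\rho}$, whereas you subsume both pieces into $\|y^{-1}\partial_x\psi\|_{\mathcal W^{1,\infty}_\rho}$ and invoke the third estimate of Proposition~\ref{near-bdr-prop} at $k=1$; the two are equivalent.
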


\begin{proof} For $k=0$, 
we have 
\[\bega 
\|\pt_x\psi\pt_yw\|_{\mathcal L_\rho^1}&\le \|y^{-1}\pt_x\psi\|_{\L^\infty_\rho}\|y\pt_yw\|_{\L^1_\rho}\\
&\lesssim \lw(\|w\|_{\mathcal L_\rho^1}+\|\pt_x w\|_{\mathcal L_\rho^1}+\|yD_{x,y}^2w\|_{L^2(y\ge \delta_0+\rho)}\rw)\|y\pt_yw\|_{\mathcal L^1_\rho}.
\enda 
\]
upon using Proposition \ref{near-bdr-prop}. Similarly, for $k=1$, we compute 
\beq \label{com}
\begin{cases}
&\pt_x(\pt_x\psi\pt_yw)=y^{-1}\pt_x^2\psi\cdot y\pt_yw+y^{-1}\pt_x\psi\cdot\pt_x(y\pt_yw)\\
&y\pt_y(\pt_x\psi\pt_yw)=\pt_x(\pt_y\psi)\cdot y\pt_yw+y^{-1}\pt_x\psi\cdot \lw\{(y\pt_y)^2w-y\pt_yw\rw\}.
\end{cases}
\eeq
This implies 
\[\bega
\|\pt_x(\pt_x\psi\pt_yw)\|_{\mathcal L_\rho^1}
&\lesssim \|y^{-1}\pt_x^2\psi\|_{\L^\infty_\rho}\|y\pt_yw\|_{\L^1_\rho}+\|y^{-1}\pt_x\psi\|_{\L^\infty_\rho}\|\pt_x(y\pt_yw)\|_{\L^1_\rho}\\
&\lesssim \lw(\|\pt_xw\|_{\mathcal L_\rho^1}+\|\pt_x^2 w\|_{\mathcal L_\rho^1}+\|yD_{x,y}^3w\|_{L^2(y\ge \delta_0+\rho)}\rw)\|y\pt_yw\|_{\mathcal L^1_\rho}\\
&\quad +\lw(\|w\|_{\mathcal L_\rho^1}+\|\pt_x w\|_{\mathcal L_\rho^1}+\|yD_{x,y}^2w\|_{L^2(y\ge \delta_0+\rho)}\rw)\|\pt_x (y\pt_yw)\|_{\L^1_\rho}\\
&\lesssim \|w\|_{\mathcal W^{2,1}_\rho}\|w\|_{\mathcal W^{1,1}_\rho}+\|yD_{x,y}^3w\|_{L^2(y\ge \delta_0+\rho)}\|w\|_{\mathcal W^{1,1}_\rho}+\|yD^2_{x,y}w\|_{L^2(y\ge \delta_0+\rho)}\|w\|_{\mathcal W^{2,1}_\rho}.
\enda 
\]
Similarly, from the calculation in \eqref{com}, we have \[\bega 
&\|y\pt_y(\pt_x\psi\pt_yw)\|_{\L^1_\rho}\\
&\lesssim  \|\pt_x\wtd u\|_{\L^\infty_\rho}\|y\pt_yw\|_{\L^1_\rho}+\|y^{-1}\pt_x\psi\|_{\L^\infty_\rho}\|w\|_{\mathcal W^{2,1}_\rho}\\
&\lesssim \lw(\|w\|_{\mathcal W^{1,1}_\rho}+\|yD_{x,y}^2w\|_{L^2(y\ge \delta_0+\rho)}\rw)\|w\|_{\mathcal W_\rho^{1,1}}+\lw(\|w\|_{\mathcal W^{1,1}_\rho} +\|yD_{x,y}^{2}w\|_{L^2(y\ge \delta_0+\rho)} \rw)\|w\|_{\mathcal W^{2,1}_\rho},\\
\enda 
\]
giving the proposition. 
\end{proof}

Next we show the nonlinear estimate away from the boundary:

\begin{lemma}\label{bilinear2} There holds
\[\bega 
&\|yD^2_{x,y}(a(y)\wtd u\cdot\nabla w)\|_{L^2(y\ge \delta_0+\rho)}\lesssim \|yD^3_{x,y}w\|_{L^2(y\ge \delta_0/2)}\lw(  \|w\|_{\L^1_\rho}+\|yD_{x,y}^3w\|_{L^2(y\ge \delta_0/2)}\rw)\\
%&\|y D^k_{x,y}(\pt_y\psi\pt_xw)\|_{L^2(y\ge \delta_0+\rho)}\lesssim\|yD^k_{x,y}w\|_{L^2(y\ge \delta_0+\rho)}\lw(  \|w\|_{\mathcal W^{k,1}_\rho}+\|yD^2w\|_{L^2(y\ge \delta_0+\rho)}\rw)\\
\enda 
\]
\end{lemma}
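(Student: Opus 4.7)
The plan is to expand $yD^2_{x,y}(a(y)\wtd u\cdot\nabla w)$ by the Leibniz rule and control each resulting term by a H\"older inequality in which the velocity factor is placed in $L^\infty(y\ge \delta_0+\rho)$ via the first estimate of Proposition~\ref{D-k-new}, while the vorticity factor is placed in $L^2(y\ge\delta_0/2)$. Writing schematically
\[
y D^2_{x,y}\bigl(a(y)\wtd u\cdot\nabla w\bigr) = \sum_{i+j=2}\binom{2}{i}\, y\, D^i_{x,y}\bigl(a(y)\wtd u\bigr)\cdot D^j_{x,y}\nabla w,
\]
we shall treat the three cases $(i,j)=(0,2),(1,1),(2,0)$ separately.

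First I would reduce $D^i_{x,y}(a(y)\wtd u)$ to scalar quantities of the form $a(y) D^\ell_{x,y}\wtd u$. Using $a'(y)=-\lambda a(y)^2$ and $a''(y)=2\lambda^2 a(y)^3$, each $\pt_y$ that lands on $a(y)$ produces a factor of $\lambda a(y)$, so pointwise
\[
|D^i_{x,y}(a(y)\wtd u)| \lesssim \sum_{\ell=0}^{i} \lambda^{i-\ell}\, a(y)^{i-\ell}\, |a(y) D^\ell_{x,y}\wtd u|.
\]
Since $a(y)\le 1$, and applying the first inequality of Proposition~\ref{D-k-new} for $\ell\le i$, one gets
\[
\|D^i_{x,y}(a(y)\wtd u)\|_{L^\infty(y\ge \delta_0+\rho)} \lesssim \|w\|_{\L^1_\rho} + \|y D^{i+1}_{x,y} w\|_{L^2(y\ge\delta_0/2)}
\]
for $i=0,1,2$.

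Next, each term in the Leibniz expansion is bounded by
\[
\|D^i_{x,y}(a(y)\wtd u)\|_{L^\infty(y\ge\delta_0+\rho)} \cdot \|y D^{j+1}_{x,y} w\|_{L^2(y\ge\delta_0+\rho)},
\]
and since $i+j=2$ both indices $i+1$ and $j+1$ are at most $3$, the two $L^2$-factors satisfy
\[
\|y D^{i+1}_{x,y}w\|_{L^2} + \|y D^{j+1}_{x,y}w\|_{L^2} \lesssim \|y D^3_{x,y}w\|_{L^2(y\ge \delta_0/2)}.
\]
Assembling the three contributions one obtains the stated bound, with the velocity factor contributing $\|w\|_{\L^1_\rho}+\|y D^3_{x,y}w\|_{L^2(y\ge\delta_0/2)}$ and the vorticity factor contributing $\|y D^3_{x,y}w\|_{L^2(y\ge\delta_0/2)}$.

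The main (mild) obstacle is purely bookkeeping: one must verify that every cross term produced by differentiating $a(y)\wtd u$ still fits the $L^\infty$-template of Proposition~\ref{D-k-new}, which is why the reduction to $a(y)D^\ell_{x,y}\wtd u$ via the explicit formulas for $a',a''$ is performed first. Once that is in place, the proof reduces to routine H\"older estimates and a comparison of weighted $L^2$ norms of different orders of derivatives of $w$ in the fixed region $y\ge \delta_0/2$.
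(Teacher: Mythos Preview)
Your proposal is correct and follows essentially the same approach as the paper: H\"older's inequality placing the velocity factor $a(y)\wtd u$ in $L^\infty(y\ge\delta_0+\rho)$ via Proposition~\ref{D-k-new} and the vorticity factor in weighted $L^2$. The paper only writes out the zero-derivative case and declares the rest ``similar,'' whereas you spell out the full Leibniz bookkeeping and the reduction of $D^i_{x,y}(a(y)\wtd u)$ to $a(y)D^\ell_{x,y}\wtd u$ via $a'=-\lambda a^2$; this is more explicit but not a different idea.
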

\begin{proof}
We give the proof for the case when there is no derivative only. The other cases are treated similarly. 
%First consider $k=0$. 
We have 
\[\bega
\|ya(y)\wtd u\pt_yw\|_{L^2(y\ge \delta_0+\rho)}&=\|a(y)\wtd u_2 \|_{L^\infty(y\ge\delta_0+\rho)}\|y\pt_yw\|_{L^2(y\ge\delta_0+\rho)}+\|a(y)\wtd u_1\|_{L^\infty(y\ge \delta_0+\rho)}\|\pt_xw\|_{L^2(y\ge \delta_0+\rho)}\\
&\lesssim \lw(\|w\|_{\L^1_\rho}+\|yD^1_{x,y}w\|_{L^2(y\ge \delta_0/2)}\rw)\|yD_{x,y}^1w\|_{L^2(y\ge \delta_0/2)}.
\enda
\]
where we used \ref{D-k-new}. The proof is complete.
\end{proof}

\section{Estimates for vorticity away from the boundary}\label{energy-sec}
In this section, we estimate 
\[
\|y^2 D_{x,y}^5w\|_{L^2(y\ge \delta_0/2)}=\sum_{i+j\le 5}\|y^2 \pt_x^i\pt_y^jw\|_{L^2(y\ge \delta_0/2)}
\]
for the scaled vorticity $w$ solving \eqref{new-eq}. 
We take a cut off function $\eta: [0,\infty)\to [0,\infty)$ such that 
\beq \label{cutoff}
\eta(y)=\begin{cases} 
0&\quad \text{if}\quad y\le \delta_0/4\\
y^2&\quad \text{if}\quad y\ge \delta_0/2.
\end{cases}
\eeq
We define 
\[
\mathcal E(t)=\sum_{i+j\le 5}\frac{1}{2}\int_0^\infty \eta (y) |\pt_x^i\pt_y^j w(t)|^2dy
\]
to be the main control for the norm $\|y^2D_{x,y}^5w\|_{L^2(y\ge \delta_0/2)}$, and 
\[
\mathcal D(t)=\sum_{i+j\le 5} \nu\lw\{ 
\int(1+\lambda b(y))\eta(y)|\pt_x^{i+1}\pt_y^jw|^2
+\frac{1}{2}\int \eta'(y)|\pt_x^i\pt_y^{j+1}w|^2
\rw\}
\]
coming from the dissipation term in the energy estimate. We note that $\eta'(y)>0$, so all the terms in $\mathcal D (t)$ are non-negative.
Moreover, we define the following quantity away from the boundary that is needed to bound $\mathcal E(t)$:
\beq\label{Na}
\bega
N_a(\wtd u,w)=&\|D_{x,y}^4(a(y)\wtd u)\|_{L^\infty(y\ge \delta_0/4)}+\|D^5_{x,y}(a(y)\wtd u)\|_{L^2(y\ge \delta_0/4)}+\|D_{x,y}^5w\|_{L^\infty(\delta_0/4\le y\le \delta_0/2)}.
\enda
\eeq

We obtain the following proposition.
 
\begin{proposition} \label{energy} Let $(w,\psi)$ solve \eqref{new-eq}-\eqref{def-streamscaled}, and set  $\wtd u=\nabla^\perp\psi$. For $\lambda$ sufficiently small, there holds 
\[\bega 
\mathcal E'(t)+c_0 \mathcal D(t)&\le C_0 \lw(\mathcal E(t)+N_a(\wtd u,w)\mathcal E(t)+N_a(\wtd u,w)^2+N_a(\wtd u,w)^2\mathcal E(t)^{1/2}\rw)
\enda 
\]
for some constants $c_0,C_0>0$.
\end{proposition}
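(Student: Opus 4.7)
The plan is a direct weighted $H^5$ energy estimate. For each multi-index $(i,j)$ with $i+j\le 5$, set $W_{ij}:=\pt_x^i\pt_y^j w$, apply $\pt_x^i\pt_y^j$ to the vorticity equation \eqref{new-eq}, multiply by $\eta(y)\,W_{ij}$, and integrate over $\mathbb{T}_{2\pi/\lambda}\times[0,\infty)$. Because $\eta$ is supported in $\{y\ge\delta_0/4\}$, every integration by parts in $y$ is boundary-free. Summing over $i+j\le 5$ produces an identity of the form $\mathcal E'(t)+\mathcal L(t)=\mathcal N(t)$, where $\mathcal L$ collects the viscous contributions from $\nu(\Delta+\lambda L)$ and $\mathcal N$ collects the transport contribution from $B(\psi,w)=-a\wtd u\cdot\nabla w$ together with commutators involving $\eta$ and the coefficients $a,b$.

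For the linear part, integration by parts against $-\nu\Delta$ yields $\nu\int\eta(|\pt_x W_{ij}|^2+|\pt_y W_{ij}|^2)$ together with the remainder $-\tfrac{\nu}{2}\int\eta'' W_{ij}^2=O(\nu\mathcal E)$; the $-\nu\lambda b(y)\pt_x^2$ piece of $L$ combines with the $x$-dissipation to produce the weight $(1+\lambda b(y))$ in $\mathcal D$, while symmetrising the drift, $-\nu\lambda\int\eta a W_{ij}\pt_y W_{ij}=\tfrac{\nu\lambda}{2}\int(\eta a)'W_{ij}^2$, contributes only $O(\nu\lambda\mathcal E)$. Commutators $[\pt_x^i\pt_y^j, a\pt_y+b\pt_x^2]$ lose at most one derivative relative to $W_{ij}$ and are absorbed into a small fraction of $\mathcal D(t)$ via Cauchy--Schwarz, using that $a,b$ and all their derivatives are bounded on the support of $\eta$. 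Thus $\mathcal L(t)\ge c_0\mathcal D(t)-C_0\mathcal E(t)$ for $\lambda$ sufficiently small.

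For the nonlinear part, expand $\pt_x^i\pt_y^j(a\wtd u\cdot\nabla w)$ by Leibniz. The diagonal term $(i',j')=(0,0)$ integrates by parts into $\tfrac12\int[\nabla\cdot(\eta a\wtd u)]\,W_{ij}^2$; using $\nabla\cdot(a\wtd u)=\lambda a^2\pt_x\psi$, the identity $\nabla\eta\cdot a\wtd u=-\eta'(y)\,a\pt_x\psi$, and the bound $a\pt_x\psi=-(a\wtd u)_2\in L^\infty(y\ge\delta_0/4)$, this is of size $C_0 N_a\mathcal E$. For middle indices $1\le i'+j'\le 4$, place $\pt_x^{i'}\pt_y^{j'}(a\wtd u)$ in $L^\infty(y\ge\delta_0/4)$ via the first piece of $N_a$, and a Cauchy--Schwarz pairing of $\eta^{1/2}W_{ij}$ against $\eta^{1/2}D^{i+j-i'-j'+1}w$ again yields $C_0 N_a\mathcal E$. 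In the critical case $i'+j'=5$ (hence $i+j=5$), only $L^2$ control of $D^5(a\wtd u)$ is available: on the tail $y\ge\delta_0/2$ the Sobolev embedding gives $\|y\nabla w\|_{L^\infty}\lesssim\mathcal E^{1/2}$, producing $C_0 N_a\mathcal E$; on the transition strip $\delta_0/4\le y\le\delta_0/2$, where $\eta$ is bounded but gives no pointwise control of $W_{ij}$, the third piece of $N_a$, namely $\|D^5 w\|_{L^\infty(\delta_0/4\le y\le\delta_0/2)}$, supplies the missing bound, and Young's inequality produces the quadratic contributions $C_0 N_a^2$ and $C_0 N_a^2\mathcal E^{1/2}$.

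The main obstacle is precisely the transition layer $\delta_0/4\le y\le\delta_0/2$: here $\eta$ is bounded and non-vanishing, all cut-off remainders $\eta', \eta''$ are concentrated there, and $\mathcal E$ alone does not yield a pointwise bound on $W_{ij}$. The third piece in $N_a(\wtd u,w)$ is engineered to bridge this gap between the near-boundary analytic region (where $\eta\equiv 0$) and the $y^2$-weighted $L^2$ tail (which $\mathcal E$ controls). Once all terms are collected and $\lambda$ is taken small enough to absorb the $O(\nu\lambda)$ commutator remainders into $c_0\mathcal D(t)$, the stated inequality follows.
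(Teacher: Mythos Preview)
Your proposal is correct and follows essentially the same approach as the paper: a direct weighted $H^5$ energy estimate with the seven-term decomposition (diffusion, the two pieces of $\nu\lambda L$ with their commutators, and the transport term with its commutator), handling the diagonal transport by integration by parts and the top-order commutator via Sobolev embedding plus the $\|D^5 w\|_{L^\infty(\text{strip})}$ part of $N_a$. One minor imprecision: your claim that the remainder $\tfrac{\nu}{2}\int\eta'' W_{ij}^2$ and the diagonal transport term are $O(\nu\mathcal E)$ and $O(N_a\mathcal E)$, respectively, only holds on $\{y\ge\delta_0/2\}$ where $|\eta'|+|\eta''|\lesssim\eta$; on the strip $\{\delta_0/4\le y\le\delta_0/2\}$ these pick up an extra $N_a^2$ correction exactly as in your treatment of the critical commutator, which you already anticipate in your last paragraph.
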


\begin{proof} Using \eqref{new-eq}, we compute
\[\bega 
\mathcal E'(t)=&\sum_{i+j\le 5}\int_0^\infty \eta (y)\pt_x^i\pt_y^j \pt_t w\cdot \pt_x^i\pt_y^jw\\
=&\sum_{i+j\le 5}\nu \int\eta \triangle (\pt_x^i\pt_y^jw)\cdot\pt_x^i\pt_y^j w+\nu\lambda  \int \eta(y) \pt_x^i\pt_y^j(a(y)\pt_yw)\cdot\pt_x^i\pt_y^jw\\
&+\nu\lambda \int \eta (y)\pt_x^i\pt_y^j(b(y)\pt_x^2w)\cdot\pt_x^i\pt_y^jw+\int \eta(y)\pt_x^i\pt_y^j\lw(a(y)\wtd u\cdot\nabla w\rw)\cdot\pt_x^i\pt_y^j w\\
=&\sum_{k=1}^7\mathcal I_k
\enda 
\]
where 
\[\begin{cases}
\mathcal I_1&=\sum_{i+j\le 5}\nu \int\eta \triangle (\pt_x^i\pt_y^jw)\cdot\pt_x^i\pt_y^j w\\
\mathcal I_2&=\sum_{i+j\le 5}\nu\lambda  \int \eta(y) (a(y)\pt_x^i\pt_y^{j+1}w)\cdot\pt_x^i\pt_y^jw\\
\mathcal I_3&=\sum_{i+j\le 5}\nu\lambda  \int \eta(y) \lw\{\pt_x^i\pt_y^j(a(y)\pt_yw)-(a(y)\pt_x^i\pt_y^{j+1}w)\rw\}\cdot\pt_x^i\pt_y^jw\\
\mathcal I_4&=\sum_{i+j\le 5}\nu\lambda  \int \eta(y) b(y)\pt_x^{i+2}\pt_y^j w\cdot\pt_x^i\pt_y^jw\\
\mathcal I_5&=\sum_{i+j\le 5}\nu\lambda  \int \eta(y) \lw\{\pt_x^i\pt_y^j(b(y)\pt_x^2w)-b(y)\pt_x^{i+2}\pt_y^jw)\rw\}\cdot\pt_x^i\pt_y^jw\\
\mathcal I_6&=\sum_{i+j\le 5} \int  \eta(y)\lw(a(y)\wtd u\cdot\nabla \pt_x^i\pt_y^jw\rw)\cdot\pt_x^i\pt_y^j w\\
\mathcal I_7&=\sum_{i+j\le 5} \int  \eta(y)\lw(\pt_x^i\pt_y^j\lw(a(y)\wtd u\cdot\nabla w\rw)-a(y)\wtd u\cdot\nabla \pt_x^i\pt_y^jw\rw)\cdot\pt_x^i\pt_y^j w\\
\end{cases}
\]
Below, we sometimes skip writing $\sum_{i+j\le 5}$, without any confusion.~\\
By integrating by parts, we obtain 
\[\bega 
\mathcal I_1=&-\nu \int \eta |\pt_x^{i+1}\pt_y^jw|^2-\nu \int \pt_x^i\pt_y^{j+1}w\pt_y(\eta \pt_x^i\pt_y^{j+1}w)\\
=&-\nu \int \eta |\pt_x^{i+1}\pt_y^jw|^2-\nu \int \eta'(y)|\pt_x^i\pt_y^{j+1}w|^2-\nu \int \eta (y)\pt_x^i\pt_y^{j+1}w\cdot \pt_x^i\pt_y^{j+2}w ,
\enda 
\]
which yields
\[
\mathcal I_1=-\nu \int \eta |\pt_x^{i+1}\pt_y^jw|^2-\frac{1}{2}\nu \int \eta'(y)|\pt_x^i\pt_y^{j+1}w|^2.
\]
Similarly, we get
\[
\mathcal I_4=-\nu \lambda \int \eta(y)b(y)|\pt_x^{i+1}\pt_y^jw|^2.
\]
On the other hand, we will now show that $\mathcal I_2+\mathcal I_3\lesssim \mathcal E(t)$.
Indeed, by integrating by parts, we have 
\[
\mathcal I_2=-\nu \lambda \frac{1}{2}\sum_{i+j\le 5}\int |\pt_x^i\pt_y^jw|^2\pt_y (\eta (y)a(y))\lesssim \sum_{i+j\le 5} \int \eta (y)|\pt_x^i\pt_y^jw|^2,
\]
and for $\mathcal I_3$, we have 
\[
\mathcal I_3\lesssim \nu \lambda \sum_{i+j\le 5} \int\eta(y)||\pt_x^i\pt_y^jw|^2\lesssim \mathcal E(t).
\]
For $\mathcal I_5$, we use integration by parts in $x$ to get 
\[
\mathcal I_5\lesssim \nu \lambda \sum_{i+j\le 5} \int \eta (y)|\pt_x^{i+1}\pt_y^jw|^2\lesssim \lambda \mathcal D(t).
\]
For $\mathcal I_6$, we have
\[\bega
\mathcal I_6=&\sum_{i+j\le 5}\frac{1}{2}\int \eta (y)a(y)\wtd u\cdot\nabla \lw(|\pt_x^i\pt_y^jw|^2\rw)=-\sum_{i+j\le 5}\frac{1}{2}\int \div(\eta(y)a(y)\wtd u)|\pt_x^i\pt_y^jw|^2\\
=&-\sum_{i+j\le 5}\frac{1}{2}\int \wtd u\cdot\nabla(\eta(y)a(y))|\pt_x^i\pt_y^jw|^2=-\frac{1}{2}\sum_{i+j\le 5}\int \wtd u_2\pt_y(\eta(y)a(y))|\pt_x^i\pt_y^jw|^2.
\enda
\]
If $y\ge \delta_0/2$ then we have 
\[
|\eta'(y)|=2y\lesssim y^2=\eta(y).
\]
Hence 
\[
-\sum_{i+j\le 4}\frac{1}{2}\int_{\delta_0}^\infty \wtd u_2\pt_y(\eta(y)a(y))|\pt_x^i\pt_y^jw|^2\lesssim \|a(y)\wtd u_2\|_{L^\infty(y\ge \delta_0/4)}\mathcal E(t).
\]
When $\frac{\delta_0}{4}\le y\le \frac{\delta_0}{2}$, we get 
\[\bega 
-\sum_{i+j\le 5}\frac{1}{2}\int_{\delta_0/4}^{\delta_0/2} \wtd u_2\pt_y(\eta(y)a(y))|\pt_x^i\pt_y^jw|^2\lesssim \|\wtd u_2\|_{L^\infty(\delta_0/4\le y\le \delta_0/2)}\lw(\mathcal E(t)+\sum_{i+j\le 5}\|\pt_x^i\pt_y^jw\|_{L^2(\delta_0/4\le y\le \delta_0/2)}
\rw).
\enda 
\]
This implies that
\[
\mathcal I_6\lesssim N_a(\wtd u,w)\mathcal E(t)+N_a(\wtd u,w)^2.
\]
Lastly, we have 
\[\bega
\mathcal I_7&\lesssim \|D^4_{x,y}(a(y)\wtd u)\|_{L^\infty(y\ge \delta_0/4)}\mathcal E(t)+\|D^5_{x,y}(a(y)\wtd u)\|_{L^2(y\ge \delta_0/4)}\|\eta(y)^{1/2}\nabla w\|_{L^\infty}\mathcal E(t)^{1/2}.
\enda
\]
Using the Sobolev embedding $L^\infty(\mathbb T\times \mathbb R)\subset H^{4}(\mathbb T\times \mathbb R)$, we have 
\[
\|\eta(y)^{1/2}\nabla w\|_{L^\infty}\lesssim \|D^4_{x,y}(\eta^{1/2}\nabla w)\|_{L^2}\lesssim \mathcal E(t)^{1/2}+\|D^5_{x,y}w\|_{L^\infty(\delta_0/4\le y\le \delta_0/2)}.
\]
The proof is complete.
\end{proof}

\begin{proposition}\label{Na}
There holds 
\[
N_a(\wtd u,w)\lesssim \|w\|_{\L^1_\rho}+\|yD_{x,y}^5w\|_{L^2(y\ge \delta_0/2)}
\]
%where\[\begaN_a(\wtd u,w)=&\|a(y)D_{x,y}^3\wtd u\|_{L^\infty(y\ge \delta_0/4)}+\|D_{x,y}^3w\|_{L^2(\delta_0/4\le y\le \delta_0/2)}\enda\]
\end{proposition}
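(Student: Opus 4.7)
The plan is to bound each of the three pieces composing $N_a(\wtd u,w)$ separately, directly using the elliptic estimates \eqref{D-k-new} established in the previous section together with the intermediate-region bound provided by Lemma \ref{middle}. No new machinery is required; this proposition is essentially an assembly of previously proved bounds.

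First, for the $L^2$ piece $\|D^5_{x,y}(a(y)\wtd u)\|_{L^2(y\ge \delta_0/4)}$, I will invoke the second estimate in \eqref{D-k-new} directly with $k=5$ and $\delta = \delta_0/4$, which yields the stated right-hand side $\|w\|_{\L^1_\rho}+\|yD^5_{x,y}w\|_{L^2(y\ge \delta_0/2)}$ immediately. Next, for the $L^\infty$ piece $\|D^4_{x,y}(a(y)\wtd u)\|_{L^\infty(y\ge \delta_0/4)}$, I will use that $\pt_x a=0$ and that $a(y)=1/(1+\lambda y)$ satisfies $|\pt_y^\ell a|=\ell!\,\lambda^\ell a^{\ell+1}\lesssim a$ uniformly for $\ell\le 4$ (since $\lambda$ is small and $a\le 1$). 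Leibniz's rule then gives the pointwise bound
\[
|D^4_{x,y}(a\wtd u)| \lesssim a(y)\sum_{j=0}^{4}|D^j_{x,y}\wtd u|,
\]
and applying the first estimate in \eqref{D-k-new} with $\delta=\delta_0/4$ and $k=0,1,\ldots,4$, then summing in $k$, produces a bound by $\|w\|_{\L^1_\rho}+\|yD^5_{x,y}w\|_{L^2(y\ge \delta_0/2)}$, as required.

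Finally, for the intermediate-region piece $\|D^5_{x,y}w\|_{L^\infty(\delta_0/4\le y\le \delta_0/2)}$, I will invoke Lemma \ref{middle} with $\delta_1=\delta_0/4$, $\delta_2=\delta_0/2$, and $k=5$, valid under the standing assumption $\rho\ge \delta_0/4$, to obtain $\|D^5_{x,y}w\|_{L^\infty(\delta_0/4\le y\le \delta_0/2)}\lesssim \|w\|_{\L^1_\rho}$. Summing the three contributions yields the claimed estimate.

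There is no real obstacle; the only mild book-keeping point is to check that commuting $a(y)$ through the $y$-derivatives does not introduce $\lambda$-dependent constants, which is precisely what the identity $|\pt_y^\ell a|=\ell!\,\lambda^\ell a^{\ell+1}$ delivers, since $\lambda^\ell$ is bounded. I would also note that the same argument in fact yields the slightly sharper bound with the $L^\infty$ norm of $w$ in the intermediate region replaced by its $\L^1_\rho$ norm, which is what will subsequently be needed when combined with the closure of the Sobolev energy estimate in Proposition \ref{energy}.
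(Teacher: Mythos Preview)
Your proof is correct and follows exactly the approach indicated in the paper, which simply states that the result is a direct consequence of the elliptic estimates \eqref{D-k-new} and Lemma \ref{middle}. You have supplied the routine details (Leibniz rule to commute $a(y)$ past the derivatives, and the choice of parameters $\delta=\delta_0/4$, $\delta_1=\delta_0/4$, $\delta_2=\delta_0/2$) that the paper leaves implicit.
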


\begin{proof}
This is a direct consequence of the inequality \eqref{D-k-new} and Lemma \ref{middle}. The proof is complete.
\end{proof}
\section{Nonlinear analysis}\label{sec7}
Our goal in this section is to combine all the estimates in analytic norm and Sobolev norms in the previous sections.
We recall that $w$ is the solution to the problem 
\[
\bega 
(\pt_{\tau }-\nu \triangle-\nu \lambda L)w&=f,\\
\nu(\pt_y+N)w|_{y=0}&=g,
\enda 
\]
where 
\[
\bega 
f&=a(y)\wtd u\cdot\nabla w,\quad \wtd u=-\nabla^\perp\psi,\\
g &=-\pt_y (\triangle+\lambda L)^{-1}f|_{y=0}.\\
\enda 
\]
We will use the coupled semigroup estimate for the exterior domain \ref{coupled}. 
We also recall the quantity defined in \eqref{nonlinear N}:
\[
\bega 
N_\rho(w,k)=&\|w\|_{\mathcal W^{k+1,1}_\rho}\lw(\|w\|_{\mathcal W^{k,1}_\rho}+\|yD_{x,y}^{k}w\|_{L^2(y\ge \delta_0 +\rho)}\rw)\\
&+\|w\|_{\mathcal W^{k,1}_\rho}\|yD_{x,y}^{k+2}w\|_{L^2(y\ge \delta_0 +\rho)},
\enda 
\]
First we show the semigroup estimates. 

\begin{proposition}\label{g-bound} Let $0\le k\le 2$, there holds \[
\|g(s)\|_{\mathcal H^{k}_\rho}\lesssim N_\rho(w(s),k)+\|yD_{x,y}^{k+2}w(s)\|^2_{L^2(y\ge \delta_0/2)} .
\]
\end{proposition}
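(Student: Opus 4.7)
My plan is to reduce $\|g(s)\|_{\mathcal H^k_\rho}$ to a sum of two pieces: a near-boundary contribution bounded by the bilinear estimate of Proposition~\ref{bilinear1}, and a far-field $L^2$ contribution absorbed by the Sobolev energy. The starting point is an explicit representation of the Fourier coefficient $g_\alpha$. By definition $g_\alpha = -\partial_y\phi_\alpha|_{y=0}$, where $\phi_\alpha$ solves $(\partial_y^2-\alpha^2+\lambda L_\alpha)\phi_\alpha = f_\alpha$ with $\phi_\alpha|_{y=0}=0$. Rewriting as $(\partial_y^2-\alpha^2)\phi_\alpha = f_\alpha-\lambda L_\alpha\phi_\alpha$ and applying the Dirichlet Green function $\frac{-1}{2|\alpha|}(e^{-|\alpha||y-z|}-e^{-|\alpha|(y+z)})$ for $\partial_y^2-\alpha^2$ on the half-line, direct differentiation at $z=0$ produces
\[
g_\alpha \;=\; \int_0^\infty e^{-|\alpha|y}\bigl(f_\alpha(y)-\lambda L_\alpha\phi_\alpha(y)\bigr)\,dy.
\]
The $\lambda$-correction is controlled exactly as in Lemma~\ref{lem-ellip2}: using $\||\alpha|a(y)\phi_\alpha\|_{L^\infty}+\|\partial_y\phi_\alpha\|_{L^\infty}\lesssim \int_0^\infty|f_\alpha|\,dy$ and the decay of $a(y),b(y)$, this correction is absorbed at cost $O(\lambda)$, leaving the effective pointwise bound $|g_\alpha|\lesssim\int_0^\infty e^{-|\alpha|y}|f_\alpha(y)|\,dy$.

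Next I multiply by $|\alpha|^k e^{\epsilon_0(\delta_0+\rho)|\alpha|}$ and split the integral at $y=\delta_0+\rho$. On the near part $y\in[0,\delta_0+\rho]$, the elementary bound $e^{\epsilon_0(\delta_0+\rho)|\alpha|-|\alpha|y}\le e^{\epsilon_0(\delta_0+\rho-y)|\alpha|}$ (valid for $\epsilon_0<1$) recovers exactly the analytic weight of $\|\cdot\|_{\mathcal L^1_\rho}$; summing over $\alpha$ and reinterpreting $|\alpha|^k$ as $\partial_x^k$ gives a bound by $\|f\|_{\mathcal W^{k,1}_\rho}$, which by Proposition~\ref{bilinear1} is $\lesssim N_\rho(w,k)$. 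On the far part $y\ge\delta_0+\rho$, the factor $e^{\epsilon_0(\delta_0+\rho)|\alpha|-|\alpha|y}$ is dominated by $e^{-(1-\epsilon_0)(\delta_0+\rho)|\alpha|}$, which absorbs the prefactor $|\alpha|^k$; Cauchy--Schwarz in $y$ (inserting $y\cdot y^{-1}$) converts $\int_{\delta_0+\rho}^\infty|f_\alpha|\,dy$ into $\|yf_\alpha\|_{L^2(y\ge\delta_0+\rho)}$, and Plancherel in $\alpha$ collapses the outer sum to $\|yf\|_{L^2(y\ge\delta_0+\rho)}$.

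It remains to estimate this far-field piece. With $f=a(y)\wtd u\cdot\nabla w$ and the trivial $L^\infty$--$L^2$ pairing,
\[
\|yf\|_{L^2(y\ge\delta_0+\rho)}\;\lesssim\; \|a(y)\wtd u\|_{L^\infty(y\ge\delta_0+\rho)}\,\|y\nabla w\|_{L^2(y\ge\delta_0+\rho)}.
\]
Proposition~\ref{D-k-new} gives $\|a(y)\wtd u\|_{L^\infty}\lesssim \|w\|_{\mathcal L^1_\rho}+\|yD_{x,y}w\|_{L^2(y\ge\delta_0/2)}$, and $\|y\nabla w\|_{L^2}\le\|yD_{x,y}^{k+2}w\|_{L^2}$ since the full derivative norm $D_{x,y}^{k+2}$ (for $k+2\ge 1$) already sums over first-order derivatives. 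The cross term $\|w\|_{\mathcal L^1_\rho}\|yD_{x,y}^{k+2}w\|_{L^2}$ is directly contained in $N_\rho(w,k)$ by definition, while the remaining quadratic term is exactly $\|yD_{x,y}^{k+2}w\|_{L^2}^2$. For $k=1,2$ the additional $x$-derivatives distribute across $a\wtd u$ and $\nabla w$ via the product rule, and each resulting term is handled in the same manner using the higher-derivative cases of Proposition~\ref{D-k-new}, the total $w$-derivative count always being at most $k+2$. The main technical obstacle lies in the first step: ensuring the $\lambda L_\alpha$ correction does not spoil the pointwise bound on $|g_\alpha|$; this is handled by the same bootstrapping argument used in Lemma~\ref{lem-ellip2}, exploiting the smallness of $\lambda$ together with $b(y)\lesssim ya(y)$ so that the correction absorbs back into the principal term.
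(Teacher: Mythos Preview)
Your overall strategy coincides with the paper's: bound $g_\alpha=-\partial_y\phi_\alpha(0)$ via an elliptic estimate for $\phi$, then split the resulting $f$-integral into a near-boundary analytic piece controlled by Proposition~\ref{bilinear1} and a far-field $L^2$ piece. The near/far splitting and the subsequent product estimates are carried out correctly and match the paper.

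There is, however, a genuine gap in your treatment of the $\lambda L_\alpha$ correction. You claim the ``effective pointwise bound'' $|g_\alpha|\lesssim\int_0^\infty e^{-|\alpha|y}|f_\alpha(y)|\,dy$, justified by invoking Lemma~\ref{lem-ellip2}. But Lemma~\ref{lem-ellip2} only gives the \emph{real} $L^\infty$ bound $\|\,|\alpha|a(y)\phi_\alpha\|_{L^\infty}+\|\partial_y\phi_\alpha\|_{L^\infty}\lesssim\int_0^\infty|f_\alpha|\,dy$. Feeding this into the correction term yields at best
\[
\Bigl|\lambda\int_0^\infty e^{-|\alpha|y}L_\alpha\phi_\alpha\,dy\Bigr|\;\lesssim\;\int_0^\infty|f_\alpha(y)|\,dy,
\]
with no surviving factor $e^{-|\alpha|y}$. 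After multiplying by the analytic weight $e^{\epsilon_0(\delta_0+\rho)|\alpha|}$, the near-boundary contribution $e^{\epsilon_0(\delta_0+\rho)|\alpha|}\int_0^{\delta_0+\rho}|f_\alpha|\,dy$ is \emph{not} controlled by $\|f_\alpha\|_{\mathcal L^1_\rho}=\int_0^{\delta_0+\rho}e^{\epsilon_0(\delta_0+\rho-y)|\alpha|}|f_\alpha|\,dy$, since the weight on the left is strictly larger. The ``$O(\lambda)$ absorption'' you describe does not restore the missing decay.

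The fix is exactly what the paper does: invoke Lemma~\ref{lem-ellip1} (not Lemma~\ref{lem-ellip2}), whose bootstrap already propagates the analytic weight through the $\lambda L$ perturbation and delivers directly
\[
e^{\epsilon_0(\delta_0+\rho)|\alpha|}|g_\alpha|\;\le\;\|\partial_y\phi_\alpha\|_{\mathcal L^\infty_\rho}\;\lesssim\;\|f_\alpha\|_{\mathcal L^1_\rho}+\|yf_\alpha\|_{L^2(y\ge\delta_0+\rho)}.
\]
Once you replace your appeal to Lemma~\ref{lem-ellip2} by this, the explicit half-space Green-function representation for $g_\alpha$ becomes superfluous, and your argument collapses to the paper's.
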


\begin{proof}
We define the function $p$ solving the elliptic problem $(\triangle+\lambda L)p=a(y)\wtd u\cdot\nabla w$ with the boundary condition $p|_{y=0}=0$. We have 
\[\bega 
\sum_\al e^{\eps_0(\delta_0+\rho)|\al|}|g|&=\sum_{\al}e^{\eps_0(\delta_0+\rho)|\al|}|\pt_y p_\al(0)|\lesssim  \|\pt_yp\|_{\L^\infty_\rho}\\
&\lesssim \|a(y)\wtd u\cdot\nabla w\|_{\L^1_\rho}+\|ya(y)D_x\lw(\wtd u\cdot\nabla w\rw)\|_{L^2(y\ge \delta_0+\rho)}\\
&\lesssim N_\rho(w,0)+\|a(y)\wtd u\|_{L^\infty}\|yD^2_{x,y}w\|_{L^2(y\ge \delta_0+\rho)}+\|a(y)D_x\wtd u\|_{L^\infty(y\ge \delta_0+\rho)}\|yD_{x,y}^1w\|_{L^2(y\ge \delta_0/2)}\\
&\lesssim N_\rho(w,0)+\lw(\|w\|_{\L^1_\rho}+\|yD_{x,y}^1w\|_{L^2(y\ge \delta_0/2)}\rw)\|yD^2_{x,y}w\|_{L^2(y\ge \delta_0/2)}\\
%&\lesssim N_\rho(w,0)+\|yw\|_{L^2(y\ge \delta_0/2)}\|yD_{x,y}w\|_{L^2(y\ge \delta_0/2)}.
\enda 
\]
where we use Proposition \ref{bilinear1}. The proof is complete.
\end{proof}
Now we give the proof for our main theorem. Using the coupled semigroup estimates \ref{coupled}, we define the norm for $1\le k\le 3$ (we can take $k=1$).
\beq\label{main-norm}
\bega 
\mathcal A_k(w(\tau),\rho)&=\lw(\|w(\tau )\|_{\mathcal W^{k,1}_\rho}+ \sqrt{\nu \tau} \|w(\tau)|_{z=0}\|_{\mathcal H^{k-1}_\rho}\rw)\\
&\quad+\lw(\|w(\tau )\|_{\mathcal W^{k+1,1}_\rho}+ \sqrt{\nu \tau}\|w(\tau)|_{z=0}\|_{\mathcal H_\rho^{k}}\rw)(\rho_0-\rho-\beta \tau )^{\gamma}\\
%&\quad+\|w(\tau)\|_{\mathcal B_\rho^{k}}(\rho_0-\rho-\beta\tau)^{\gamma}\\
\enda 
\eeq
and the quantity 
\[\bega
A(\beta)=&\sup_{0<\tau  \beta <\rho_0}\quad \lw\{\sup_{0<\rho<\rho_0-\beta \tau }\lw(\mathcal A_k(w(\tau),\rho) \rw)\rw\}+\sup_{0<\tau \beta<\rho_0}\|yD_{x,y}^3w\|_{L^2(y\ge \delta_0/2)}.
\enda 
\]

\begin{proposition}\label{iter-beta} There holds 
\[\bega
A(\beta)\lesssim&\|w_0\|_{\mathcal W^{2,1}_{\rho_0}}+\|yD_{x,y}^5w_0\|_{L^2(y\ge \delta_0/4)}+\beta^{-1}A(\beta)^2 \\
&+e^{C(1+A(\beta))\beta^{-1}}\lw(\|yD_{x,y}^5w_0\|_{L^2(y\ge \delta_0/4)}+\beta^{-1}A(\beta)^2\rw).
\enda
\]
\end{proposition}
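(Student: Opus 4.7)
The plan is to bound the two halves of $A(\beta)$ --- the analytic piece $\sup \mathcal A_k(w(\tau),\rho)$ and the Sobolev piece $\|yD^3_{x,y}w\|_{L^2(y\ge \delta_0/2)}$ --- more or less independently, and then sum them. For the analytic piece, the starting point is the Duhamel formula \eqref{duh}, now applied with $f=a(y)\widetilde u\cdot\nabla w$ and the induced nonlocal boundary datum $g=-\partial_y(\Delta+\lambda L)^{-1}f|_{y=0}$. Plugging the bilinear bound of Proposition \ref{bilinear1} and the trace bound of Proposition \ref{g-bound} into the coupled semigroup estimates of Propositions \ref{coupled} and \ref{at-bdr} controls every term on the right-hand side in terms of the nonlinear quantity $N_\rho(w(s),k)$ plus the purely far-field Sobolev contribution handled by Lemma \ref{bilinear2}.

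The key feature of $N_\rho(w,k)$ is that it involves the higher-order norm $\|w\|_{\mathcal W^{k+1,1}_\rho}$, one derivative beyond what $\mathcal A_k(w(\tau),\rho)$ directly controls at a fixed radius. To absorb this loss I would use the standard Cauchy--Kowalevski trick adapted to the shrinking radius $\rho_0-\beta\tau$: for every admissible pair $(s,\rho)$ with $\rho<\rho_0-\beta s$, the definition of $A(\beta)$ yields
\[
\|w(s)\|_{\mathcal W^{k+1,1}_\rho}+\sqrt{\nu s}\,\|w(s)|_{z=0}\|_{\mathcal H^k_\rho}\ \le\ A(\beta)\,(\rho_0-\rho-\beta s)^{-\gamma}.
\]
Integrating in time with $\int_0^\tau (\rho_0-\rho-\beta s)^{-\gamma}\,ds\lesssim\beta^{-1}(\rho_0-\rho-\beta\tau)^{1-\gamma}$ converts each occurrence of the derivative loss into a factor $\beta^{-1}$ and recovers exactly the degeneration weight $(\rho_0-\rho-\beta\tau)^{\gamma}$ that appears in $\mathcal A_k$. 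Multiplying by the lower-order factor in $N_\rho$ (also bounded by $A(\beta)$) then produces the $\beta^{-1}A(\beta)^2$ term in the claim.

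For the Sobolev piece I would close the energy estimate of Proposition \ref{energy} via Gronwall, using Proposition \ref{Na} to bound $N_a(\widetilde u,w)$ by $\|w\|_{\mathcal L^1_\rho}+\|yD^5_{x,y}w\|_{L^2(y\ge\delta_0/2)}$. On any time interval $\tau\le\rho_0/\beta$ one has $\int_0^\tau N_a(s)\,ds\lesssim A(\beta)/\beta$ and $\int_0^\tau N_a(s)^2\,ds\lesssim A(\beta)^2/\beta$, so Gronwall delivers the exponential prefactor $e^{C(1+A(\beta))\beta^{-1}}$ multiplying the initial-data term $\|yD^5_{x,y}w_0\|_{L^2(y\ge\delta_0/4)}$ and the nonlinear remainder $\beta^{-1}A(\beta)^2$ inherited from the analytic estimate. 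Summing the analytic and the Sobolev bounds then yields the proposition.

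The main obstacle will be bookkeeping rather than any single hard estimate. One must choose the weight exponent $\gamma$ (e.g.\ $\gamma=1/2$, matching the $\sqrt{\nu\tau}$ weight on the trace in \eqref{main-norm}) so that every time integral produced by Propositions \ref{coupled} and \ref{at-bdr} is simultaneously integrable against the singular factor $(\nu(\tau-s))^{-1/2}$ from the boundary trace of the heat kernel and balanced by $(\rho_0-\rho-\beta s)^{-\gamma}$. Because Proposition \ref{at-bdr} loses two derivatives on the trace while $\mathcal A_k$ offers only one derivative of slack, the iteration must be run at two adjacent $k$-indices simultaneously; this is exactly why \eqref{main-norm} pairs $\|w\|_{\mathcal W^{k,1}_\rho}$ with $\sqrt{\nu\tau}\|w|_{z=0}\|_{\mathcal H^{k-1}_\rho}$ and $\|w\|_{\mathcal W^{k+1,1}_\rho}$ with $\sqrt{\nu\tau}\|w|_{z=0}\|_{\mathcal H^{k}_\rho}$, and verifying that these two levels close on each other is the delicate step of the argument.
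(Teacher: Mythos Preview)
Your proposal is correct and follows essentially the same route as the paper: bound the four analytic components of $\mathcal A_k$ using Propositions \ref{coupled}, \ref{at-bdr}, \ref{bilinear1}, and \ref{g-bound} together with the shrinking-radius integrals $\int_0^\tau(\rho_0-\rho-\beta s)^{-\gamma}\,ds\lesssim\beta^{-1}$ and $\int_0^\tau(\rho_0-\rho-\beta s)^{-1-\gamma}\,ds\lesssim\beta^{-1}(\rho_0-\rho-\beta\tau)^{-\gamma}$, then close the far-field Sobolev norm via Proposition \ref{energy}, Proposition \ref{Na}, and Gronwall to pick up the exponential prefactor. Your observation about the paired two-level structure of \eqref{main-norm} being what allows the trace estimate of Proposition \ref{at-bdr} to close is exactly the point.
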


\begin{proof}
For simplicity, we let 
\[
M_0=\|w_0\|_{\mathcal W^{k+1,1}_{\rho_0}}+\|yD_{x,y}^5w_0\|_{L^2(y\ge \delta_0/2)}
\]
First we bound $\|w(\tau)\|_{\mathcal W^{k,1}_\rho}$. By Theorem \ref{coupled} and Proposition \ref{g-bound}, we get 
\[\bega 
\|w(\tau)\|_{\mathcal W^{k,1}_\rho}&\lesssim M_0+\lambda A(\beta)+\lambda\nu \int_0^\tau A(\beta)(\rho_0-\rho-\beta s)^{-\gamma}ds\\
&+\sqrt {\lambda \nu} A(\beta)\int_0^\tau (\rho_0-\rho-\beta s)^{-\gamma}ds
+A(\beta)^2\int_0^\tau (\rho_0-\rho-\beta s)^{-\gamma}ds+\beta^{-1}A(\beta)\\
&\lesssim M_0+\lambda A(\beta)+ \beta^{-1}A(\beta)+\beta^{-1}A(\beta)^2.
\enda 
\]
Next, we bound $ \sqrt{\nu \tau} \|w(\tau)|_{z=0}\|_{\mathcal H^{k-1}_\rho}$. From Propositions \ref{at-bdr} and \ref{g-bound}, we get 
\[\bega 
 \sqrt{\nu \tau} \|w(\tau)|_{z=0}\|_{\mathcal H^{k-1}_\rho}&\lesssim M_0+\lambda \nu\sqrt\tau A(\beta)\int_0^\tau s^{-1/2}(\rho_0-\rho-\beta s)^{-\gamma}ds\\
&+\lambda\sqrt{\nu \tau} A(\beta)+\nu\sqrt{\nu \tau}A(\beta) \int_0^\tau (\rho_0-\rho-\gamma s)^{-\gamma}ds+\sqrt{\nu \tau}\beta^{-1}A(\beta)\\
&+A(\beta)^2\int_0^\tau \sqrt{\tau}(\tau-s)^{-1/2}ds+\sqrt{\nu \tau}A(\beta)^2 \int_0^\tau \lw(1+(\rho_0-\rho-\beta s)^{-\gamma}\rw)ds\\
%&+\sqrt{\nu \tau}\int_0^\tau A(\beta)^2(1+(\rho_0-\rho-\beta s)^{-\gamma})ds\\
&\lesssim M_0+\lambda A(\beta)+\beta^{-1}A(\beta)^2.
\enda 
\]
Next we bound $\|w(\tau)\|_{\mathcal W_\rho^{k+1,1}}$. Again using Propositions \ref{coupled} and \ref{g-bound}, we get
\[\bega
\|w(\tau)\|_{\mathcal W_\rho^{k+1,1}}&\lesssim M_0+\lambda A(\beta)+\lambda \nu A(\beta)\int_0^\tau(\rho_0-\rho-\beta s)^{-1-\gamma}ds\\
&\quad+\lambda \sqrt\nu A(\beta) \int_0^\tau s^{-1/2}(\rho_0-\rho-\beta s)^{-\gamma-1}ds\\
&\quad+A(\beta)^2\int_0^\tau(\rho_0-\rho-\beta s)^{-1-\gamma}ds+\beta^{-1}A(\beta)\\
&\lesssim M_0+\lambda A(\beta)+\lw(\beta^{-1}A(\beta)+\beta^{-1}A(\beta)^2\rw)(\rho_0-\rho-\beta \tau)^{-\gamma}.
\enda
\]
Finally, we bound $\sqrt{\nu \tau}\|w(\tau)|_{z=0}\|_{\mathcal H_\rho^k}$. From Propositions \ref{at-bdr} and \ref{g-bound}, we get 
\[\bega 
\sqrt{\nu \tau}\|w(\tau)|_{z=0}\|_{\mathcal H_\rho^k}&\lesssim M_0+\lambda \nu\sqrt{\nu \tau} A(\beta)\int_0^\tau s^{-1/2}(\rho_0-\rho-\beta s)^{-\gamma-1}ds\\
&\quad+\lambda A(\beta)+\nu^{3/2}\sqrt \tau A(\beta)\int_0^\tau (\rho_0-\rho-\beta s)^{-\gamma-1}ds\\
&\quad+\sqrt{\nu \tau}\beta^{-1}A(\beta)^2+A(\beta)^2\int_0^\tau \frac{\sqrt \tau}{\sqrt{\tau-s}}(\rho_0-\rho-\beta s)^{-\gamma}ds\\
&\quad+\sqrt{\nu \tau}A(\beta)^2\int_0^\tau (\rho_0-\rho-\beta s)^{-1-\gamma}ds+\sqrt{\nu \tau}\int_0^\tau A(\beta)^2(1+(\rho_0-\rho-\beta s)^{-\gamma})ds\\
&\lesssim M_0+\lambda A(\beta)+\beta^{-1}(A(\beta)+A(\beta)^2)(\rho_0-\rho-\beta \tau)^{-\gamma}.
\enda
\]
Finally, for $\|y^2D_{x,y}^5w\|_{L^2(y\ge \delta_0/2)}$, this is bounded by the functional energy $\mathcal E(t)$ in section \ref{energy-sec}. From Proposition \ref{energy} and Proposition \ref{Na}, we get 
\[
\mathcal E'(\tau)\le C_0 \lw(\mathcal E(\tau)+A(\beta)\mathcal E(\tau)+A(\beta)^2+A(\beta)^2\mathcal E(\tau)^{1/2} \rw).
\]
By Gronwall lemma, we get
\[
 \mathcal E(\tau )\le e^{C_0(1+A(\beta))\tau }\lw(\mathcal E(0)+C_0\int_0^{\tau }A(\beta)^2ds\rw)
\]
Hence 
\[
\|y^2D_{x,y}^5w\|_{L^2(y\ge \delta_0/2)}\le e^{C_0(1+A(\beta))\beta^{-1}}\lw(\|y^2D_{x,y}^5w_0\|_{L^2(y\ge \delta_0/4)}+C_0\beta^{-1}A(\beta)^2\rw).
\]
This completes the proof.
\end{proof}
\section{Proof of the main theorem}
Taking $\beta$ sufficiently large in Proposition \ref{iter-beta}, 
we have $A(\beta)\le C_0$ for some constant $C_0$ that only depends on the size of the initial data. 
This implies 
\[
\|w(\tau)\|_{\mathcal W_\rho^{k,1}}+\sqrt{\nu \tau}\|w(\tau)\|_{\mathcal H_\rho^{k}}\le C_0
\]
uniformly in the time interval $\tau\in \lw[0,\frac{\rho_0}{2\beta}\rw]$.
This implies
\[
\sup_{0\le \tau\le \frac{\rho_0}{2\beta}} \quad \sum_{\al\in \lambda \Z} e^{\eps_0\delta_0|\al|}|w_\al(\tau)|_{z=0}|\le C_0.
\]
To show the uniform bound \eqref{vorticity-bd} on the vorticity, it is natural to switch back to the original variables $(t,\theta,r)$. Using the relation \eqref{w-omega}, we obtain 
\[
\sup_{0\le t\le\frac{\lambda^2\rho_0}{2\beta}}\quad \sqrt{\nu t}\sum_{n\in \mathbb Z} e^{\delta_0\eps_0\lambda |n|}|\w_n(t)|_{z=0}|\le C_0
\]
Let $T=\frac{\lambda^2\rho_0}{2\beta}$. We have, for any $\theta\in \mathbb T$ and $t\in [0,T]$:
\[
|\w^\nu(t,\theta,1)|\le \sum_{n\in \Z}|\w_n(t,1)|\le C_0(\nu t)^{-1/2}\sum_{n\in \Z}e^{-\delta_0\eps_0\lambda |n|} .
\]
Hence we obtain, for some constant $C_0>0$:
\beq\label{time}
\|\w^\nu(t,\theta,r=1)\|_{L^\infty(\mathbb T)}\le C_0 (\nu t)^{-1/2}
\eeq
for all $0\le t\le T$. The proof of \eqref{vorticity-bd} is complete.
To justify the inviscid limit \eqref{inv-lim}, we check the condition 
\[
\nu \int_0^T|\w^\nu(t,\theta,1)|dt\to 0\qquad \text{as}\quad \nu \to 0.
\]
This is direct from the bound \eqref{time}. The proof of Theorem \ref{theo-main} is complete.

%\bibliographystyle{abbrv}
%\bibliography{book-ref-ext}

\begin{thebibliography}{10}

\bibitem{Anderson}
C.~R. Anderson.
\newblock Vorticity boundary conditions and boundary vorticity generation for
  two-dimensional viscous incompressible flows.
\newblock {\em J. Comput. Phys.}, 80(1):72--97, 1989.


\bibitem{2N1}
C.~Bardos, T.~T. Nguyen, T.~T. Nguyen, and E.~S. Titi.
\newblock The inviscid limit for the $2d$ navier-stokes equations in bounded
  domains, {\em Kinetic \& Related Models, to appear.}

\bibitem{Bardos} C. Bardos and E. Titi, 
Mathematics and turbulence: where do we stand?
{\em J. Turbul.} 14 (2013), no. 3, 42-76. 

\bibitem{SamCaf-ext}
R.~Caflisch and M.~Sammartino.
\newblock Navier-{S}tokes equations on an exterior circular domain:
  construction of the solution and the zero viscosity limit.
\newblock {\em C. R. Acad. Sci. Paris S\'{e}r. I Math.}, 324(8):861--866, 1997.

\bibitem{PC} P. Constantin and V. Vicol, Remarks on high Reynolds numbers hydrodynamics and the inviscid limit. (English summary)
{\em J. Nonlinear Sci.} 28 (2018), no. 2, 711-724.

\bibitem{GMM1} D. G\'erard-Varet, Y. Maekawa, and N. Masmoudi,
Gevrey stability of Prandtl expansions for 2-dimensional Navier-Stokes flows.
{\em Duke Math. J.} 167 (2018), no. 13, 2531-2631.

\bibitem{GMM2} D. G\'erard-Varet, Y. Maekawa, and N. Masmoudi, Optimal Prandtl expansion around a concave boundary layer,
arXiv:2005.05022 (2020)

\bibitem{Gie} Gie, G.-M.,  Kelliher, J. P. , Mazzucato, A. L.: Boundary layers for the
Navier-Stokes equations linearized around a stationary Euler flow.
{\em J. Math. Fluid Mech.} 20 (2018), no. 4, 1405-1426.

\bibitem{Grenier-CPAM}
E.~Grenier.
\newblock On the nonlinear instability of {E}uler and {P}randtl equations.
\newblock {\em Comm. Pure Appl. Math.}, 53(9):1067--1091, 2000.

\bibitem{GGN1} E. Grenier, Y. Guo, and T. Nguyen, Spectral instability of characteristic boundary layer flows,
{\em Duke Math. J.} 165 (2016), 3085-3146.


\bibitem{Toan-Grenier-APDE}
E.~Grenier and T.~T. Nguyen.
\newblock {$L^\infty$} instability of {P}randtl layers.
\newblock {\em Ann. PDE}, 5(2):Paper No. 18, 36, 2019.

\bibitem{GrenierNguyen2} E. Grenier and T. Nguyen, On nonlinear instability of Prandtl's boundary layers: the case of Rayleigh's stable shear flows.
arXiv:1706.01282 (2017)

\bibitem{Kato84}
T.~Kato.
\newblock Remarks on zero viscosity limit for nonstationary {N}avier-{S}tokes
  flows with boundary.
\newblock In {\em Seminar on nonlinear partial differential equations
  ({B}erkeley, {C}alif., 1983)}, volume~2 of {\em Math. Sci. Res. Inst. Publ.},
  pages 85--98. Springer, New York, 1984.

\bibitem{trinh-igor}
I.~Kukavica, T.~Nguyen, V.~Vicol, and F.~Wang.
\newblock On the Euler+Prandtl expansion for the Navier-Stokes equations, 
{\em J. Math. Fluid Mech., to appear}.

\bibitem{KVW1}
I.~Kukavica, V.~Vicol, and F.~Wang.
\newblock The inviscid limit for the {N}avier-{S}tokes equations with data
  analytic only near the boundary.
\newblock {\em Arch. Ration. Mech. Anal.}, 237(2):779--827, 2020.

\bibitem{Maekawa14}
Y.~Maekawa.
\newblock On the inviscid limit problem of the vorticity equations for viscous
  incompressible flows in the half-plane.
\newblock {\em Comm. Pure Appl. Math.}, 67(7):1045--1128, 2014.

\bibitem{Mae-ext} Y. Maekawa, 
On stability of steady circular flows in a two-dimensional exterior disk.
{\em Arch. Ration. Mech. Anal.} 225 (2017), no. 1, 287-374.

\bibitem{MM} Y. Maekawa and A. Mazzucato, 
The inviscid limit and boundary layers for Navier-Stokes flows.
{\em Handbook of mathematical analysis in mechanics of viscous fluids,} 
781-828, Springer, Cham, 2018.

\bibitem{2N}
T.~T. Nguyen and T.~T. Nguyen.
\newblock The inviscid limit of {N}avier-{S}tokes equations for analytic data
  on the half-space.
\newblock {\em Arch. Ration. Mech. Anal.}, 230(3):1103--1129, 2018.

\bibitem{SammartinoCaflisch2}
M.~Sammartino and R.~E. Caflisch.
\newblock Zero viscosity limit for analytic solutions of the {N}avier-{S}tokes
  equation on a half-space. {II}. {C}onstruction of the {N}avier-{S}tokes
  solution.
\newblock {\em Comm. Math. Phys.}, 192(2):463--491, 1998.

\bibitem{WW20} C.~ Wang and Y. Wang. Zero-Viscosity Limit of the Navier-Stokes Equations in a Simply-Connected Bounded Domain Under the Analytic Setting, {\em J. Math. Fluid Mech.} (2020)

\bibitem{Wang}
F. Wang,
\newblock The three-dimensional inviscid limit problem with data analytic near the boundary. 
\newblock {\em SIAM J. Math. Anal.} 52 (2020), no. 4, 3520--3545. 



\end{thebibliography}
%
%\def\cprime{$'$} \def\cprime{$'$}

\def\cprime{$'$} \def\cprime{$'$}

\end{document}